\newtheorem{theorem}{Theorem}
\newtheorem{note}[theorem]{Note}
\newtheorem{convention}[theorem]{Convention}
\newtheorem{corollary}[theorem]{Corollary}
\newtheorem{definition}[theorem]{Definition}
\newtheorem{lemma}[theorem]{Lemma}
\newtheorem{proposition}[theorem]{Proposition}
\newtheorem{remark}[theorem]{Remark}
\newenvironment{proof}[1][Proof]{\textbf{#1.} }{\ \rule{0.5em}{0.5em}}
\numberwithin{equation}{section}
\begin{document}

\title{Univariate error function based neural network approximation}
\author{George A. Anastassiou \\
Department of Mathematical Sciences\\
University of Memphis\\
Memphis, TN 38152, U.S.A.\\
ganastss@memphis.edu}
\date{}
\maketitle

\begin{abstract}
Here we research the univariate quantitative approximation of real and
complex valued continuous functions on a compact interval or all the real
line by quasi-interpolation, Baskakov type and quadrature type neural
network operators. We perform also the related fractional approximation.
These approximations are derived by establishing Jackson type inequalities
involving the modulus of continuity of the engaged function or its high
order derivative or fractional derivatives. Our operators are defined by
using a density function induced by the error function. The approximations
are pointwise and with respect to the uniform norm. The related feed-forward
neural networks are with one hidden layer.
\end{abstract}

\textbf{2010 AMS Mathematics Subject Classification:} 26A33, 41A17, 41A25,
41A30, 41A36.

\textbf{Keywords and Phrases:} error function, neural network approximation,
quasi-interpolation operator, Baskakov operator, quadrature operator,
modulus of continuity, complex approximation, Caputo fractional derivative,
fractional approximation.

\section{Introduction}

The author in \cite{2} and \cite{3}, see Chapters 2-5, was the first to
establish neural network approximations to continuous functions with rates
by very specifically defined neural network operators of
Cardaliagnet-Euvrard and \textquotedblright Squashing\textquotedblright\
types, by employing the modulus of continuity of the engaged function or its
high order derivative, and producing very tight Jackson type inequalities.
He treats there both the univariate and multivariate cases. The defining
these operators \textquotedblright bell-shaped\textquotedblright\ and
\textquotedblright squashing\textquotedblright\ functions are assumed to be
of compact support. Also in \cite{3} he gives the $N$th order asymptotic
expansion for the error of weak approximation of these two operators to a
special natural class of smooth functions, see Chapters 4-5 there.

The author inspired by \cite{15}, continued his studies on neural networks
approximation by introducing and using the proper quasi-interpolation
operators of sigmoidal and hyperbolic tangent type which resulted into \cite%
{7}, \cite{9}, \cite{10}, \cite{11}, \cite{12}, by treating both the
univariate and multivariate cases. He did also the corresponding fractional
case \cite{13}.

The author here performs univariate error function based neural network
approximations to continuous functions over compact intervals of the real
line or over the whole $\mathbb{R}$, the he extends his results to complex
valued functions. Finally he treats completely the related fractional
approximation. All convergences here are with rates expressed via the
modulus of continuity of the involved function or its high order derivative,
or fractional derivatives and given by very tight Jackson type inequalities.

The author comes up with the ''right'' precisely defined
quasi-interpolation, Baskakov type and quadrature neural networks operators,
associated with the error function and related to a compact interval or real
line. Our compact intervals are not necessarily symmetric to the origin.
Some of our upper bounds to error quantity are very flexible and general. In
preparation to prove our results we establish important properties of the
basic density function defining our operators.

Feed-forward neural networks (FNNs) with one hidden layer, the only type of
networks we deal with in this article, are mathematically expressed as 
\begin{equation*}
N_{n}\left( x\right) =\sum_{j=0}^{n}c_{j}\sigma \left( \left\langle
a_{j}\cdot x\right\rangle +b_{j}\right) ,\text{ \ \ \ }x\in \mathbb{R}^{s}%
\text{, \ \ }s\in \mathbb{N}\text{,}
\end{equation*}%
where for $0\leq j\leq n$, $b_{j}\in \mathbb{R}$ are the thresholds, $%
a_{j}\in \mathbb{R}^{s}$ are the connection weights, $c_{j}\in \mathbb{R}$
are the coefficients, $\left\langle a_{j}\cdot x\right\rangle $ is the inner
product of $a_{j}$ and $x$, and $\sigma $ is the activation function of the
network. In many fundamental neural network models, the activation function
is the error. About neural networks in general read \cite{19}, \cite{20}, 
\cite{21}.

\section{Basics}

We consider here the (Gauss) error special function (\cite{1}, \cite{14}) 
\begin{equation}
\func{erf}\left( x\right) =\frac{2}{\sqrt{\pi }}\int_{0}^{x}e^{-t^{2}}dt,%
\text{ \ \ }x\in \mathbb{R},  \tag{1}  \label{r1}
\end{equation}%
which is a sigmoidal type function and a strictly increasing function.

It has the basic properties 
\begin{equation}
\func{erf}\left( 0\right) =0\text{, \ }\func{erf}\left( -x\right) =-\func{erf%
}\left( x\right) ,\text{ \ \ }\func{erf}\left( +\infty \right) =1\text{, \ \ 
}\func{erf}\left( -\infty \right) =-1,  \tag{2}  \label{r2}
\end{equation}%
and 
\begin{equation}
\left( \func{erf}\left( x\right) \right) ^{\prime }=\frac{2}{\sqrt{\pi }}%
e^{-x^{2}},\text{ \ \ }x\in \mathbb{R}\text{,}  \tag{3}  \label{r3}
\end{equation}

\begin{equation}
\int \func{erf}\left( x\right) dx=x\func{erf}\left( x\right) +\frac{%
e^{-x^{2}}}{\sqrt{\pi }}+C,  \tag{4}  \label{r4}
\end{equation}%
where $C$ is a constant.

The error function is related to the cumulative probability distribution
function of the standard normal distribution 
\begin{equation*}
\Phi \left( x\right) =\frac{1}{2}+\frac{1}{2}\func{erf}\left( \frac{x}{\sqrt{%
2}}\right) .
\end{equation*}%
We consider the activation function 
\begin{equation}
\chi \left( x\right) =\frac{1}{4}\left( \func{erf}\left( x+1\right) -\func{%
erf}\left( x-1\right) \right) ,\text{ \ }x\in \mathbb{R},  \tag{5}  \label{5}
\end{equation}%
and we notice that 
\begin{gather}
\chi \left( -x\right) =\frac{1}{4}\left( \func{erf}\left( -x+1\right) -\func{%
erf}\left( -x-1\right) \right) =  \notag \\
\frac{1}{4}\left( \func{erf}\left( -\left( x-1\right) \right) -\func{erf}%
\left( -\left( x+1\right) \right) \right) =\frac{1}{4}\left( -\func{erf}%
\left( x-1\right) +\func{erf}\left( x+1\right) \right) =\chi \left( x\right)
,  \tag{6}  \label{r6}
\end{gather}%
thus $\chi $ is an even function.

Since $x+1>x-1$, then $\func{erf}\left( x+1\right) >\func{erf}\left(
x-1\right) $, and $\chi \left( x\right) >0$, all $x\in \mathbb{R}$.

We see that 
\begin{equation}
\chi \left( 0\right) =\frac{\func{erf}\left( 1\right) }{2}\simeq \frac{0.843%
}{2}=0.4215.  \tag{7}  \label{r7}
\end{equation}%
Let $x>0,$ we have 
\begin{gather}
\chi ^{\prime }\left( x\right) =\frac{1}{4}\left( \frac{2}{\sqrt{\pi }}%
e^{-\left( x+1\right) ^{2}}-\frac{2}{\sqrt{\pi }}e^{-\left( x-1\right)
^{2}}\right) =  \notag \\
\frac{1}{2\sqrt{\pi }}\left( \frac{1}{e^{\left( x+1\right) ^{2}}}-\frac{1}{%
e^{\left( x-1\right) ^{2}}}\right) =\frac{1}{2\sqrt{\pi }}\left( \frac{%
e^{\left( x-1\right) ^{2}}-e^{\left( x+1\right) ^{2}}}{e^{\left( x+1\right)
^{2}}e^{\left( x-1\right) ^{2}}}\right) <0,  \tag{8}  \label{r8}
\end{gather}%
proving $\chi ^{\prime }\left( x\right) <0$, for $x>0$.

That is $\chi $ is strictly decreasing on $[0,\infty )$ and is strictly
increasing on $(-\infty ,0]$, and $\chi ^{\prime }\left( 0\right) =0$.

Clearly the $x$-axis is the horizontal asymptote on $\chi .$

Conclusion, $\chi $ is a bell symmetric function with maximum $\chi \left(
0\right) \simeq 0.4215.$

We further present

\begin{theorem}
\label{t1}We have that 
\begin{equation}
\sum_{i=-\infty }^{\infty }\chi \left( x-i\right) =1\text{, \ all }x\in 
\mathbb{R}.  \tag{9}  \label{r9}
\end{equation}
\end{theorem}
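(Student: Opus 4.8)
The plan is to evaluate the doubly infinite series by a telescoping argument powered by the asymptotic values of $\func{erf}$ at $\pm\infty$. After substituting the definition (5), each summand becomes one quarter of the difference of two values of $\func{erf}$ whose arguments differ by one unit, so the partial sums collapse. Concretely, I would first rewrite
\[
\chi(x-i)=\frac{1}{4}\left(\func{erf}(x-i+1)-\func{erf}(x-i-1)\right)=\frac{1}{4}\left(\func{erf}(x-(i-1))-\func{erf}(x-(i+1))\right),
\]
which displays each term, on setting $a_{j}:=\func{erf}(x-j)$, as $\tfrac14(a_{i-1}-a_{i+1})$.

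Next I would work with the symmetric partial sum $S_{N}:=\sum_{i=-N}^{N}\chi(x-i)$ and reindex the two pieces $\sum_{i} a_{i-1}$ and $\sum_{i} a_{i+1}$ (shifting by $j=i-1$ and $j=i+1$ respectively). After the overlapping interior terms cancel, only four boundary values survive, yielding
\[
S_{N}=\frac{1}{4}\left(\func{erf}(x+N+1)+\func{erf}(x+N)-\func{erf}(x-N)-\func{erf}(x-N-1)\right).
\]

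Finally I would let $N\to\infty$ and invoke the limits $\func{erf}(+\infty)=1$ and $\func{erf}(-\infty)=-1$ from (2): the first two boundary terms tend to $1$ and the last two to $-1$, so $S_{N}\to\tfrac14(1+1+1+1)=1$, which is the claimed identity. Since every $\chi(x-i)>0$ was already noted, the bilateral series has nonnegative terms, its value is therefore unambiguous, and it equals the limit of the symmetric partial sums; this settles any convergence concern at once.

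The only delicate point is the index bookkeeping in the telescoping step: one must verify precisely which four boundary terms remain and with which signs, since an off-by-one error there would corrupt the limiting value. Once that reindexing is pinned down, everything else follows immediately from the stated properties of $\func{erf}$.
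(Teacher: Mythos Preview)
Your proof is correct and rests on the same telescoping idea as the paper, namely collapsing the sum via the limits $\func{erf}(\pm\infty)=\pm 1$. The only organizational difference is that the paper first telescopes the step-one differences $\func{erf}(x-i)-\func{erf}(x-i-1)$ to obtain the value $2$ and then adds a shifted copy to handle the step-two gap in $\chi$, whereas you telescope the step-two differences $a_{i-1}-a_{i+1}$ directly with symmetric partial sums; your route is slightly more economical but the substance is identical.
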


\begin{proof}
We notice 
\begin{equation*}
\sum_{i=-\infty }^{\infty }\func{erf}\left( x-i\right) -\func{erf}\left(
x-1-i\right) =
\end{equation*}%
\begin{equation}
\sum_{i=0}^{\infty }\left( \func{erf}\left( x-i\right) -\func{erf}\left(
x-1-i\right) \right) +\sum_{i=-\infty }^{-1}\left( \func{erf}\left(
x-i\right) -\func{erf}\left( x-1-i\right) \right) .  \tag{10}  \label{r10}
\end{equation}%
Furthermore ($\lambda \in \mathbb{Z}^{+}$) (telescoping sum) 
\begin{equation*}
\sum_{i=0}^{\infty }\left( \func{erf}\left( x-i\right) -\func{erf}\left(
x-1-i\right) \right) =
\end{equation*}%
\begin{equation*}
\underset{\lambda \rightarrow \infty }{\lim }\sum_{i=0}^{\lambda }\left( 
\func{erf}\left( x-i\right) -\func{erf}\left( x-1-i\right) \right) =
\end{equation*}
\begin{equation}
\func{erf}\left( x\right) -\underset{\lambda \rightarrow \infty }{\lim }%
\func{erf}\left( x-1-\lambda \right) =1+\func{erf}\left( x\right) .  \tag{11}
\label{r11}
\end{equation}%
Similarly we get 
\begin{equation*}
\sum_{i=-\infty }^{-1}\left( \func{erf}\left( x-i\right) -\func{erf}\left(
x-1-i\right) \right) =
\end{equation*}%
\begin{equation*}
\underset{\lambda \rightarrow \infty }{\lim }\sum_{i=-\lambda }^{-1}\left( 
\func{erf}\left( x-i\right) -\func{erf}\left( x-1-i\right) \right) =
\end{equation*}%
\begin{equation}
\underset{\lambda \rightarrow \infty }{\lim }\left( \func{erf}\left(
x+\lambda \right) -\func{erf}\left( x\right) \right) =1-\func{erf}\left(
x\right) .  \tag{12}  \label{r12}
\end{equation}%
Adding (\ref{r11}) and (\ref{r12}), we get 
\begin{equation}
\sum_{i=-\infty }^{\infty }\left( \func{erf}\left( x-i\right) -\func{erf}%
\left( x-1-i\right) \right) =2,\text{ \ \ for any }x\in \mathbb{R}\text{.} 
\tag{13}  \label{13r}
\end{equation}%
Hence (\ref{13r}) is true for $\left( x+1\right) $, giving us 
\begin{equation}
\sum_{i=-\infty }^{\infty }\left( \func{erf}\left( x+1-i\right) -\func{erf}%
\left( x-i\right) \right) =2,\text{ \ \ for any }x\in \mathbb{R}\text{.} 
\tag{14}  \label{r14}
\end{equation}%
Adding (\ref{13r}) and (\ref{r14}) we obtain%
\begin{equation}
\sum_{i=-\infty }^{\infty }\left( \func{erf}\left( x+1-i\right) -\func{erf}%
\left( x-1-i\right) \right) =4,\text{ \ \ for any }x\in \mathbb{R}\text{,} 
\tag{15}  \label{r15}
\end{equation}%
proving (\ref{r9}).
\end{proof}

Thus \ 
\begin{equation}
\sum_{i=-\infty }^{\infty }\chi \left( nx-i\right) =1,\ \ \ \ \forall \text{ 
}n\in \mathbb{N},\text{ }\forall \text{ }x\in \mathbb{R}.  \tag{16}
\label{r16}
\end{equation}

Furthermore we get:

Since $\chi $ is even it holds $\sum_{i=-\infty }^{\infty }\chi \left(
i-x\right) =1,\ $for any $x\in \mathbb{R}.$

Hence $\sum_{i=-\infty }^{\infty }\chi \left( i+x\right) =1$, \ $\forall $ $%
x\in \mathbb{R}$, and $\sum_{i=-\infty }^{\infty }\chi \left( x+i\right) =1$%
, \ $\forall $ $x\in \mathbb{R}$.

\begin{theorem}
\label{t2}It holds 
\begin{equation}
\int_{-\infty }^{\infty }\chi \left( x\right) dx=1.  \tag{17}  \label{r17}
\end{equation}
\end{theorem}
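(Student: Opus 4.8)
The plan is to build an explicit antiderivative of $\chi$ from formula (\ref{r4}) and evaluate it at the two ends of the line. By the substitution $u=x+1$ (respectively $u=x-1$) in (\ref{r4}), the map $x\mapsto (x+1)\func{erf}(x+1)+\pi^{-1/2}e^{-(x+1)^2}$ is an antiderivative of $\func{erf}(x+1)$, and likewise with $x-1$. Hence, from the definition (\ref{5}), the function
\[
G(x)=\frac{1}{4}\left[(x+1)\func{erf}(x+1)-(x-1)\func{erf}(x-1)+\frac{e^{-(x+1)^2}-e^{-(x-1)^2}}{\sqrt{\pi}}\right]
\]
is an antiderivative of $\chi$, so that $\int_{-\infty}^{\infty}\chi(x)\,dx=\lim_{x\to+\infty}G(x)-\lim_{x\to-\infty}G(x)$. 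The Gaussian terms vanish at both ends, so everything reduces to the behaviour of the two $x\func{erf}$ products.

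Using property (\ref{r2}), as $x\to+\infty$ both $\func{erf}(x+1)$ and $\func{erf}(x-1)$ tend to $1$; writing $(x+1)\func{erf}(x+1)-(x-1)\func{erf}(x-1)=(x+1)[\func{erf}(x+1)-1]-(x-1)[\func{erf}(x-1)-1]+2$, the two bracketed products vanish and the difference tends to $2$, giving $G(x)\to \tfrac12$. Symmetrically, as $x\to-\infty$ both values tend to $-1$ and $G(x)\to -\tfrac12$. Subtracting yields $\tfrac12-(-\tfrac12)=1$, which is (\ref{r17}). One may also shorten the second limit by invoking the evenness of $\chi$ from (\ref{r6}), reducing the whole computation to $2\int_0^{\infty}\chi$.

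The one point that genuinely needs care is that the individual antiderivatives $(x\pm1)\func{erf}(x\pm1)$ are each unbounded as $x\to\pm\infty$; only their combination converges. To justify that each product $(x\pm1)[\func{erf}(x\pm1)\mp1]\to 0$ I would use the super-polynomial decay of the error-function tail, namely $1-\func{erf}(s)=\frac{2}{\sqrt{\pi}}\int_{s}^{\infty}e^{-t^2}\,dt$, which decays faster than $s^{-1}$, so the polynomial prefactor is absorbed. This is the main obstacle; once it is in place the evaluation is routine.

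Alternatively, and more transparently, one may bypass (\ref{r4}) altogether: by (\ref{r3}) one has $\chi(x)=\frac{1}{2\sqrt{\pi}}\int_{x-1}^{x+1}e^{-t^2}\,dt$, and since the integrand is nonnegative, Tonelli's theorem permits swapping the order of integration. For fixed $t$ the condition $x-1\le t\le x+1$ confines $x$ to an interval of length $2$, so
\[
\int_{-\infty}^{\infty}\chi(x)\,dx=\frac{1}{2\sqrt{\pi}}\int_{-\infty}^{\infty}e^{-t^2}\cdot 2\,dt=\frac{1}{\sqrt{\pi}}\int_{-\infty}^{\infty}e^{-t^2}\,dt=1,
\]
where the final equality is just $\func{erf}(+\infty)-\func{erf}(-\infty)=2$ from (\ref{r2}). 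This route avoids the delicate cancellation of unbounded terms entirely.
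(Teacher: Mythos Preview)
Your argument is correct; both the antiderivative route and the Tonelli route give (\ref{r17}) without gaps. The tail estimate $s(1-\func{erf}(s))\to 0$ you invoke is standard and handles the only delicate point in the first approach, and the second approach is clean and fully rigorous.

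The paper, however, proceeds differently: it leans on the already-established partition-of-unity identity (\ref{r9}) (equivalently $\sum_{j}\chi(x+j)=1$), splitting the line into unit intervals and translating each back to $[0,1]$:
\[
\int_{-\infty}^{\infty}\chi(x)\,dx=\sum_{j=-\infty}^{\infty}\int_{j}^{j+1}\chi(x)\,dx=\sum_{j}\int_{0}^{1}\chi(x+j)\,dx=\int_{0}^{1}\Bigl(\sum_{j}\chi(x+j)\Bigr)dx=\int_{0}^{1}1\,dx=1.
\]
So the paper's proof is essentially structural, deducing the density property as a corollary of Theorem~\ref{t1}, while your proofs are direct analytic computations independent of (\ref{r9}). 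Your Tonelli argument is arguably the most transparent of all three and would work for any bounded window applied to a Gaussian; the paper's approach has the virtue of applying verbatim to any nonnegative $\chi$ satisfying $\sum_{j}\chi(x+j)=1$, regardless of whether an explicit antiderivative or inner integral representation is available.
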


\begin{proof}
We notice that 
\begin{equation*}
\int_{-\infty }^{\infty }\chi \left( x\right) dx=\sum_{j=-\infty }^{\infty
}\int_{j}^{j+1}\chi \left( x\right) dx=\sum_{j=-\infty }^{\infty
}\int_{0}^{1}\chi \left( x+j\right) dx=
\end{equation*}%
\begin{equation*}
\int_{0}^{1}\left( \sum_{j=-\infty }^{\infty }\chi \left( x+j\right) \right)
dx=\int_{0}^{1}1dx=1.
\end{equation*}
\end{proof}

So $\chi \left( x\right) $ is a density function on $\mathbb{R}$.

\begin{theorem}
\label{t3}Let $0<\alpha <1,$ and $n\in \mathbb{N}$ with $n^{1-\alpha }\geq 3$%
. It holds 
\begin{equation}
\sum_{\left\{ 
\begin{array}{c}
k=-\infty \\ 
:\left| nx-k\right| \geq n^{1-\alpha }%
\end{array}%
\right. }^{\infty }\chi \left( nx-k\right) <\frac{1}{2\sqrt{\pi }\left(
n^{1-\alpha }-2\right) e^{\left( n^{1-\alpha }-2\right) ^{2}}}.  \tag{18}
\label{r18}
\end{equation}
\end{theorem}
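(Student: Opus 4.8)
The plan is to reduce this doubly-infinite tail sum to a one-sided Gaussian tail integral and then apply an elementary estimate for $\int_a^\infty e^{-t^2}\,dt$. Set $c=n^{1-\alpha}$, so $c\ge 3$. First I would use that $\chi$ is even to split the sum over $|nx-k|\ge c$ into the part with $nx-k\ge c$ and the part with $nx-k\le -c$; by (\ref{r6}) the second part equals a sum of $\chi$ evaluated at the arguments $k-nx\ge c$, so both parts have the same form. In each part the arguments run through an arithmetic progression of step $1$ whose least value $v_0$ satisfies $c\le v_0 < c+1$, i.e. the terms are $\chi(v_0),\chi(v_0+1),\chi(v_0+2),\dots$ with $v_0\ge c$.

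Next I would pass from $\chi$ to the Gaussian via the integral representation that follows from (\ref{r3}) and (\ref{5}), namely $\chi(w)=\frac{1}{2\sqrt\pi}\int_{w-1}^{w+1}e^{-t^2}\,dt$. Writing each such integral as $\int_{w-1}^{w}+\int_{w}^{w+1}$ and summing over $w=v_0,v_0+1,v_0+2,\dots$, the unit subintervals $[v_0+j-1,v_0+j]$ and $[v_0+j,v_0+j+1]$ tile $[v_0-1,\infty)$ and $[v_0,\infty)$ respectively, so the one-sided sum equals $\frac{1}{2\sqrt\pi}\left(\int_{v_0-1}^{\infty}+\int_{v_0}^{\infty}\right)e^{-t^2}\,dt$. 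Since $e^{-t^2}\ge 0$ this is at most $\frac{1}{\sqrt\pi}\int_{v_0-1}^{\infty}e^{-t^2}\,dt\le \frac{1}{\sqrt\pi}\int_{c-1}^{\infty}e^{-t^2}\,dt$, using $v_0-1\ge c-1$. Adding the two sides then gives the whole sum $\le \frac{2}{\sqrt\pi}\int_{c-1}^{\infty}e^{-t^2}\,dt$.

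Finally I would invoke the standard tail bound $\int_a^\infty e^{-t^2}\,dt<\frac{e^{-a^2}}{2a}$ for $a>0$, which follows from $e^{-t^2}\le \frac{t}{a}e^{-t^2}$ on $[a,\infty)$ together with $\int_a^\infty t\,e^{-t^2}\,dt=\frac12 e^{-a^2}$. The hypothesis $c\ge 3$ guarantees $a:=c-2\ge 1>0$, so the bound applies and the denominators are positive. A short monotonicity comparison shows $\frac{2}{\sqrt\pi}\int_{c-1}^{\infty}e^{-t^2}\,dt\le \frac{1}{\sqrt\pi}\int_{c-2}^{\infty}e^{-t^2}\,dt$, equivalently $\int_{c-1}^\infty e^{-t^2}\,dt\le \int_{c-2}^{c-1}e^{-t^2}\,dt$: the left side is $\le \frac{1}{2(c-1)}e^{-(c-1)^2}\le \frac14 e^{-(c-1)^2}$ while the right side is $\ge e^{-(c-1)^2}$ since $e^{-t^2}$ attains its minimum on $[c-2,c-1]$ at $c-1$. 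Applying the tail bound with $a=c-2=n^{1-\alpha}-2$ then yields $\frac{1}{\sqrt\pi}\int_{c-2}^{\infty}e^{-t^2}\,dt\le \frac{e^{-(c-2)^2}}{2\sqrt\pi(c-2)}$, which is exactly the claimed right-hand side.

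I expect the delicate point to be the bookkeeping in the middle step: the intervals $[w-1,w+1]$ have width $2$ while the lattice spacing is $1$, so a careless integral comparison either loses a factor of $2$ or shifts the exponent. One must track the tiling and the monotonicity of $e^{-t^2}$ precisely to land on the constant $\frac{1}{2\sqrt\pi}$ and the shift to $n^{1-\alpha}-2$, and the final loosening inequality is what makes the stated clean form valid despite the sharper intermediate estimate.
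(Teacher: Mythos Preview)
Your argument is correct, and it reaches the same endpoint as the paper---the bound $\frac{1}{\sqrt{\pi}}\int_{n^{1-\alpha}-2}^{\infty}e^{-z^{2}}\,dz$ followed by the elementary Gaussian tail estimate $\int_{a}^{\infty}e^{-t^{2}}\,dt<\frac{1}{2a}e^{-a^{2}}$---but the route to that integral is genuinely different.

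The paper first applies the mean value theorem to obtain the pointwise bound $\chi(y)<\frac{1}{\sqrt{\pi}}e^{-(y-1)^{2}}$ for $y\ge 1$, rewrites the tail as $\sum\chi(|nx-k|)$, and then asserts a sum-to-integral comparison
\[
\sum_{|nx-k|\ge n^{1-\alpha}} e^{-(|nx-k|-1)^{2}}\;\le\;\int_{n^{1-\alpha}-1}^{\infty}e^{-(x-1)^{2}}\,dx,
\]
after which a change of variable gives the $(n^{1-\alpha}-2)$-shifted integral directly. Your approach instead uses the exact integral representation $\chi(w)=\frac{1}{2\sqrt{\pi}}\int_{w-1}^{w+1}e^{-t^{2}}\,dt$ and the tiling of $[v_{0}-1,\infty)$ and $[v_{0},\infty)$ by the unit subintervals, which turns each one-sided sum into an \emph{equality}, at the cost of picking up a factor $2$ when the two sides are combined. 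You then recover that factor with the clean comparison $\int_{c-1}^{\infty}e^{-t^{2}}\,dt\le\int_{c-2}^{c-1}e^{-t^{2}}\,dt$, which is the step that shifts the lower limit down to $c-2$.

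What your approach buys is that the passage from sum to integral is fully justified and elementary: the paper's comparison (the two-sided lattice sum against a single integral) is true but not explained there, and a naive ``one term $\le$ one unit of integral'' count would in fact give a factor~$2$ on the wrong side. Your tiling sidesteps that issue entirely. The price is the extra monotonicity lemma, but that is short and transparent. Either way the final inequality is the same, with the strictness coming from the Gaussian tail bound.
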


\begin{proof}
Let $x\geq 1$. That is $0\leq x-1<x+1.$ Applying the mean value theorem we
get 
\begin{equation}
\chi \left( x\right) =\frac{1}{4}\left( \func{erf}\left( x+1\right) -\func{%
erf}\left( x-1\right) \right) =\frac{1}{\sqrt{\pi }}e^{-\xi ^{2}},  \tag{19}
\label{r19}
\end{equation}%
where $x-1<\xi <x+1.$

Hence 
\begin{equation}
\chi \left( x\right) <\frac{e^{-\left( x-1\right) ^{2}}}{\sqrt{\pi }},\text{
\ \ }x\geq 1.  \tag{20}  \label{r20}
\end{equation}%
Thus we have%
\begin{equation*}
\sum_{\left\{ 
\begin{array}{c}
k=-\infty \\ 
:\left| nx-k\right| \geq n^{1-\alpha }%
\end{array}%
\right. }^{\infty }\chi \left( nx-k\right) =\sum_{\left\{ 
\begin{array}{c}
k=-\infty \\ 
:\left| nx-k\right| \geq n^{1-\alpha }%
\end{array}%
\right. }^{\infty }\chi \left( \left| nx-k\right| \right) <
\end{equation*}%
\begin{equation}
\frac{1}{\sqrt{\pi }}\sum_{\left\{ 
\begin{array}{c}
k=-\infty \\ 
:\left| nx-k\right| \geq n^{1-\alpha }%
\end{array}%
\right. }^{\infty }e^{-\left( \left| nx-k\right| -1\right) ^{2}}\leq \frac{1%
}{\sqrt{\pi }}\int_{\left( n^{1-\alpha }-1\right) }^{\infty }e^{-\left(
x-1\right) ^{2}}dx  \tag{21}  \label{r21}
\end{equation}%
\begin{equation*}
=\frac{1}{\sqrt{\pi }}\int_{n^{1-\alpha }-2}^{\infty }e^{-z^{2}}dz
\end{equation*}%
(see section 3.7.3 of \cite{22})%
\begin{equation*}
=\frac{1}{2\sqrt{\pi }}\left( \min \left( \sqrt{\pi },\frac{1}{\left(
n^{1-\alpha }-2\right) }\right) \right) e^{-\left( n^{1-\alpha }-2\right)
^{2}}
\end{equation*}%
(by $n^{1-\alpha }-2\geq 1$, hence $\frac{1}{n^{1-\alpha }-2}\leq 1<\sqrt{%
\pi }$) 
\begin{equation}
<\frac{1}{2\sqrt{\pi }\left( n^{1-\alpha }-2\right) e^{\left( n^{1-\alpha
}-2\right) ^{2}}},  \tag{22}  \label{r22}
\end{equation}%
proving the claim.
\end{proof}

Denote by $\left\lfloor \cdot \right\rfloor $ the integral part of the
number and by $\left\lceil \cdot \right\rceil $ the ceiling of the number.

\begin{theorem}
\label{t4}Let $x\in \left[ a,b\right] \subset \mathbb{R}$ and $n\in \mathbb{N%
}$ so that $\left\lceil na\right\rceil \leq \left\lfloor nb\right\rfloor $.
It holds 
\begin{equation}
\frac{1}{\sum_{k=\left\lceil na\right\rceil }^{\left\lfloor nb\right\rfloor
}\chi \left( nx-k\right) }<\frac{1}{\chi \left( 1\right) }\simeq 4.019,\text{
\ }\forall \text{ }x\in \left[ a,b\right] .  \tag{23}  \label{r23}
\end{equation}
\end{theorem}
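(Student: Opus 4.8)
The plan is to prove the equivalent lower bound $\sum_{k=\lceil na\rceil}^{\lfloor nb\rfloor}\chi(nx-k)>\chi(1)$, from which (\ref{r23}) follows at once by taking reciprocals, together with the numerical evaluation $\chi(1)=\frac14\left(\func{erf}(2)-\func{erf}(0)\right)=\frac14\func{erf}(2)\simeq 0.2488$, so that $1/\chi(1)\simeq 4.019$. Since every summand $\chi(nx-k)$ is strictly positive (as established in the Basics section), it suffices to exhibit a single index $k_0$ inside the summation window $\{\lceil na\rceil,\dots,\lfloor nb\rfloor\}$ for which $\chi(nx-k_0)\geq\chi(1)$; the full sum is then at least this one term.

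To produce such a term I will invoke the monotonicity and symmetry of $\chi$ recorded earlier: $\chi$ is even and strictly decreasing on $[0,\infty)$, so $\chi(nx-k_0)=\chi\left(|nx-k_0|\right)\geq\chi(1)$ whenever $|nx-k_0|\leq 1$. Thus the task reduces to finding an integer $k_0$ with $\lceil na\rceil\leq k_0\leq\lfloor nb\rfloor$ and $|nx-k_0|\leq 1$. Because $x\in[a,b]$ forces $na\leq nx\leq nb$, the natural candidate is an integer nearest to $nx$; the only thing requiring care is that this integer actually lands inside the index window, and this is precisely where the hypothesis $\lceil na\rceil\leq\lfloor nb\rfloor$ is used.

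I expect the sole obstacle to be the boundary behavior when $nx$ sits in the gap between $na$ and $\lceil na\rceil$ (or symmetrically near $nb$), where the obvious choice $k_0=\lfloor nx\rfloor$ might drop below $\lceil na\rceil$. I would therefore split into two cases. If $nx\geq\lceil na\rceil$, take $k_0=\lfloor nx\rfloor$: then $\lceil na\rceil\leq\lfloor nx\rfloor\leq\lfloor nb\rfloor$, since $\lceil na\rceil$ is an integer not exceeding $nx\leq nb$, while $0\leq nx-\lfloor nx\rfloor<1$. If instead $nx<\lceil na\rceil$, then $na$ cannot be an integer, and taking $k_0=\lceil na\rceil$ gives $0<\lceil na\rceil-nx\leq\lceil na\rceil-na<1$, with $k_0$ in range by the hypothesis. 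In either case $|nx-k_0|<1$, whence $\chi(nx-k_0)>\chi(1)$, and therefore $\sum_{k=\lceil na\rceil}^{\lfloor nb\rfloor}\chi(nx-k)\geq\chi(nx-k_0)>\chi(1)$, which yields (\ref{r23}).
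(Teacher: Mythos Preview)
Your proof is correct and follows essentially the same route as the paper: bound the sum below by a single well-chosen term $\chi(nx-k_0)$ with $|nx-k_0|<1$, then use the evenness and strict monotonicity of $\chi$ on $[0,\infty)$ to conclude $\chi(nx-k_0)>\chi(1)$. The only difference is that the paper simply asserts ``We can choose $k_0\in[\lceil na\rceil,\lfloor nb\rfloor]\cap\mathbb{Z}$ such that $|nx-k_0|<1$,'' whereas you supply the explicit case split (taking $k_0=\lfloor nx\rfloor$ when $nx\geq\lceil na\rceil$ and $k_0=\lceil na\rceil$ otherwise) and verify the boundary case carefully; this is a welcome elaboration rather than a different argument.
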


\begin{proof}
Let $x\in \left[ a,b\right] .$ We see that 
\begin{equation}
1=\sum_{k=-\infty }^{\infty }\chi \left( nx-k\right) >\sum_{k=\left\lceil
na\right\rceil }^{\left\lfloor nb\right\rfloor }\chi \left( nx-k\right) = 
\tag{24}  \label{r24}
\end{equation}%
\begin{equation*}
\sum_{k=\left\lceil na\right\rceil }^{\left\lfloor nb\right\rfloor }\chi
\left( \left| nx-k\right| \right) >\chi \left( \left| nx-k_{0}\right|
\right) ,
\end{equation*}%
$\forall $ $k_{0}\in \left[ \left\lceil na\right\rceil ,\left\lfloor
nb\right\rfloor \right] \cap \mathbb{Z}$.

We can choose $k_{0}\in \left[ \left\lceil na\right\rceil ,\left\lfloor
nb\right\rfloor \right] \cap \mathbb{Z}$ such that $\left| nx-k_{0}\right|
<1.$

Therefore 
\begin{equation*}
\chi \left( \left| nx-k_{0}\right| \right) >\chi \left( 1\right) =\frac{1}{4}%
\left( \func{erf}\left( 2\right) -\func{erf}\left( 0\right) \right) =
\end{equation*}%
\begin{equation}
\frac{\func{erf}\left( 2\right) }{4}=\frac{0.99533}{4}=0.2488325.  \tag{25}
\label{r25}
\end{equation}

Consequently we get 
\begin{equation}
\sum_{k=\left\lceil na\right\rceil }^{\left\lfloor nb\right\rfloor }\chi
\left( \left| nx-k\right| \right) >\chi \left( 1\right) \simeq 0.2488325, 
\tag{26}  \label{r26}
\end{equation}%
and 
\begin{equation}
\frac{1}{\sum_{k=\left\lceil na\right\rceil }^{\left\lfloor nb\right\rfloor
}\chi \left( \left| nx-k\right| \right) }<\frac{1}{\chi \left( 1\right) }%
\simeq 4.019,  \tag{27}  \label{r27}
\end{equation}%
proving the claim.
\end{proof}

\begin{remark}
\label{r5}We also notice that 
\begin{equation*}
1-\sum_{k=\left\lceil na\right\rceil }^{\left\lfloor nb\right\rfloor }\chi
\left( nb-k\right) =\sum_{k=-\infty }^{\left\lceil na\right\rceil -1}\chi
\left( nb-k\right) +\sum_{k=\left\lfloor nb\right\rfloor +1}^{\infty }\chi
\left( nb-k\right)
\end{equation*}%
\begin{equation*}
>\chi \left( nb-\left\lfloor nb\right\rfloor -1\right)
\end{equation*}%
(call $\varepsilon :=nb-\left\lfloor nb\right\rfloor $, $0\leq \varepsilon
<1 $) 
\begin{equation}
=\chi \left( \varepsilon -1\right) =\chi \left( 1-\varepsilon \right) \geq
\chi \left( 1\right) >0.  \tag{28}  \label{r28}
\end{equation}%
Therefore 
\begin{equation*}
\underset{n\rightarrow \infty }{\lim }\left( 1-\sum_{k=\left\lceil
na\right\rceil }^{\left\lfloor nb\right\rfloor }\chi \left( nb-k\right)
\right) >0.
\end{equation*}%
Similarly, 
\begin{equation*}
1-\sum_{k=\left\lceil na\right\rceil }^{\left\lfloor nb\right\rfloor }\chi
\left( na-k\right) =\sum_{k=-\infty }^{\left\lceil na\right\rceil -1}\chi
\left( na-k\right) +\sum_{k=\left\lfloor nb\right\rfloor +1}^{\infty }\chi
\left( na-k\right)
\end{equation*}%
\begin{equation*}
>\chi \left( na-\left\lceil na\right\rceil +1\right)
\end{equation*}%
(call $\eta :=\left\lceil na\right\rceil -na$, \ $0\leq \eta <1$) 
\begin{equation*}
=\chi \left( 1-\eta \right) \geq \chi \left( 1\right) >0.
\end{equation*}%
Therefore again 
\begin{equation}
\underset{n\rightarrow \infty }{\lim }\left( 1-\sum_{k=\left\lceil
na\right\rceil }^{\left\lfloor nb\right\rfloor }\chi \left( na-k\right)
\right) >0.  \tag{29}  \label{r29}
\end{equation}%
Hence we derive that 
\begin{equation}
\underset{n\rightarrow \infty }{\lim }\sum_{k=\left\lceil na\right\rceil
}^{\left\lfloor nb\right\rfloor }\chi \left( nx-k\right) \neq 1,  \tag{30}
\label{r30}
\end{equation}%
for at least some $x\in \left[ a,b\right] $.
\end{remark}

\begin{note}
\label{n6}For large enough $n$ we always obtain $\left\lceil na\right\rceil
\leq \left\lfloor nb\right\rfloor $. Also $a\leq \frac{k}{n}\leq b$, iff $%
\left\lceil na\right\rceil \leq k\leq \left\lfloor nb\right\rfloor $. In
general it holds (by $(\ref{r16})$) that 
\begin{equation}
\sum_{k=\left\lceil na\right\rceil }^{\left\lfloor nb\right\rfloor }\chi
\left( nx-k\right) \leq 1.  \tag{31}  \label{r31}
\end{equation}
\end{note}

We give

\begin{definition}
\label{d7}Let $f\in C\left( \left[ a,b\right] \right) $ $n\in \mathbb{N}$.
We set 
\begin{equation}
A_{n}\left( f,x\right) =\frac{\sum_{k=\left\lceil na\right\rceil
}^{\left\lfloor nb\right\rfloor }f\left( \frac{k}{n}\right) \chi \left(
nx-k\right) }{\sum_{k=\left\lceil na\right\rceil }^{\left\lfloor
nb\right\rfloor }\chi \left( nx-k\right) }\text{, \ }\forall \text{\ }x\in %
\left[ a.b\right] ,  \tag{32}  \label{r32}
\end{equation}%
$A_{n}$ is a neural network operator.
\end{definition}

\begin{definition}
\label{d8}Let $f\in C_{B}\left( \mathbb{R}\right) ,$ (continuous and bounded
functions on $\mathbb{R}),$ $n\in \mathbb{N}.$\ We introduce the
quasi-interpolation operator%
\begin{equation}
B_{n}\left( f,x\right) :=\sum_{k=-\infty }^{\infty }f\left( \frac{k}{n}%
\right) \chi \left( nx-k\right) ,\text{ }\ \forall \text{ }x\in \mathbb{R}, 
\tag{33}  \label{r33}
\end{equation}%
and the Kantorovich type operator 
\begin{equation}
C_{n}\left( f,x\right) =\sum_{k=-\infty }^{\infty }\left( n\int_{\frac{k}{n}%
}^{\frac{k+1}{n}}f\left( t\right) dt\right) \chi \left( nx-k\right) ,\text{ }%
\ \forall \text{ }x\in \mathbb{R}\text{.}  \tag{34}  \label{r34}
\end{equation}%
$B_{n}$, $C_{n}$ are neural network operators.
\end{definition}

Also we give

\begin{definition}
\label{d9}Let $f\in C_{B}\left( \mathbb{R}\right) ,$ $n\in \mathbb{N}.$\ Let 
$\theta \in \mathbb{N}$, $w_{r}\geq 0$, $\sum_{r=0}^{\theta }w_{r}=1$, $k\in 
\mathbb{Z}$, and%
\begin{equation}
\delta _{nk}\left( f\right) =\sum_{r=0}^{\theta }w_{r}f\left( \frac{k}{n}+%
\frac{r}{n\theta }\right) .  \tag{35}  \label{r35}
\end{equation}%
We put 
\begin{equation}
D_{n}\left( f,x\right) =\sum_{k=-\infty }^{\infty }\delta _{nk}\left(
f\right) \chi \left( nx-k\right) ,\text{ }\ \forall \text{ }x\in \mathbb{R}%
\text{.}  \tag{36}  \label{r36}
\end{equation}%
$D_{n}$ is a neural network operator of quadrature type.
\end{definition}

We need

\begin{definition}
\label{d10}For $f\in C\left( \left[ a,b\right] \right) $, the first modulus
of continuity is given by 
\begin{equation}
\omega _{1}\left( f,\delta \right) :=\underset{%
\begin{array}{c}
x,y\in \left[ a,b\right] \\ 
\left| x-y\right| \leq \delta%
\end{array}%
}{\sup }\left| f\left( x\right) -f\left( y\right) \right| \text{, \ }\delta
>0.  \tag{37}  \label{r37}
\end{equation}%
We have that $\underset{\delta \rightarrow 0}{\lim }\omega _{1}\left(
f,\delta \right) =0.$

Similarly $\omega _{1}\left( f,\delta \right) $ is defined for $f\in
C_{B}\left( \mathbb{R}\right) $.
\end{definition}

We know that, $f$ is uniformly continuous on $\mathbb{R}$ iff $\underset{%
\delta \rightarrow 0}{\lim }\omega _{1}\left( f,\delta \right) =0.$

We make

\begin{remark}
\label{re11}We notice the following, that 
\begin{equation}
A_{n}\left( f,x\right) -f\left( x\right) \overset{(\ref{r32})}{=}\frac{%
\sum_{k=\left\lceil na\right\rceil }^{\left\lfloor nb\right\rfloor }f\left( 
\frac{k}{n}\right) \chi \left( nx-k\right) -f\left( x\right)
\sum_{k=\left\lceil na\right\rceil }^{\left\lfloor nb\right\rfloor }\chi
\left( nx-k\right) }{\sum_{k=\left\lceil na\right\rceil }^{\left\lfloor
nb\right\rfloor }\chi \left( nx-k\right) },  \tag{38}  \label{r38}
\end{equation}%
using (\ref{r23}) we get, 
\begin{equation}
\left| A_{n}\left( f,x\right) -f\left( x\right) \right| \leq \left(
4.019\right) \left| \sum_{k=\left\lceil na\right\rceil }^{\left\lfloor
nb\right\rfloor }f\left( \frac{k}{n}\right) \chi \left( nx-k\right) -f\left(
x\right) \sum_{k=\left\lceil na\right\rceil }^{\left\lfloor nb\right\rfloor
}\chi \left( nx-k\right) \right| .  \tag{39}  \label{r39}
\end{equation}%
Again here $0<\alpha <1$ and $n\in \mathbb{N}$ with $n^{1-\alpha }\geq 3$.
Let the fixed $K,L>0;$ for the linear combination $\frac{K}{n^{\alpha }}+%
\frac{L}{\left( n^{1-\alpha }-2\right) e^{\left( n^{1-\alpha }-2\right) ^{2}}%
},$ the dominant rate of convergence to zero, as $n\rightarrow \infty $, is $%
n^{-\alpha }.$ The closer $\alpha $ is to $1$, we get faster and better rate
of convergence to zero.
\end{remark}

In this article we study basic approximation properties of $%
A_{n},B_{n},C_{n},D_{n}$ neural network operators. That is, the quantitative
pointwise and uniform convergence of these operators to the unit operator $I$%
.

\section{Real Neural Network Approximations}

Here we present a series of neural network approximations to a function
given with rates.

We give

\begin{theorem}
\label{t12}Let $f\in C\left( \left[ a,b\right] \right) ,$ $0<\alpha <1$, $%
x\in \left[ a,b\right] ,$ $n\in \mathbb{N}$ with $n^{1-\alpha }\geq 3$, $%
\left\| \cdot \right\| _{\infty }$ is the supremum norm. Then

1) 
\begin{equation}
\left| A_{n}\left( f,x\right) -f\left( x\right) \right| \leq \left(
4.019\right) \left[ \omega _{1}\left( f,\frac{1}{n^{\alpha }}\right) +\frac{%
\left\| f\right\| _{\infty }}{\sqrt{\pi }\left( n^{1-\alpha }-2\right)
e^{\left( n^{1-\alpha }-2\right) ^{2}}}\right] =:\mu _{1n},  \tag{40}
\label{r40}
\end{equation}

2) 
\begin{equation}
\left\| A_{n}\left( f\right) -f\right\| _{\infty }\leq \mu _{1n}.  \tag{41}
\label{r41}
\end{equation}%
We notice that $\underset{n\rightarrow \infty }{\lim }A_{n}\left( f\right)
=f $, pointwise and uniformly.
\end{theorem}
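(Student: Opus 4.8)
The plan is to build directly on the estimate already prepared in Remark~\ref{re11}. Writing $f(x)=f(x)\cdot 1$ and factoring, inequality~(\ref{r39}) reduces the whole problem to bounding
\[
\left| \sum_{k=\left\lceil na\right\rceil }^{\left\lfloor nb\right\rfloor }\left( f\left( \frac{k}{n}\right) -f\left( x\right) \right) \chi \left( nx-k\right) \right| \leq \sum_{k=\left\lceil na\right\rceil }^{\left\lfloor nb\right\rfloor }\left| f\left( \frac{k}{n}\right) -f\left( x\right) \right| \chi \left( nx-k\right) ,
\]
where the last step is the triangle inequality together with $\chi >0$. The entire argument then hinges on splitting this sum according to the distance of $k/n$ from $x$.

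First I would partition the index range into a ``near'' block $\left\{ k:\left| nx-k\right| <n^{1-\alpha }\right\} $ and a ``far'' block $\left\{ k:\left| nx-k\right| \geq n^{1-\alpha }\right\} $. On the near block one has $\left| x-k/n\right| <1/n^{\alpha }$, so each difference $\left| f(k/n)-f(x)\right| $ is at most $\omega _{1}\left( f,1/n^{\alpha }\right) $ by Definition~\ref{d10}; pulling this constant out and using $\sum \chi \left( nx-k\right) \leq 1$ from (\ref{r31}) bounds the near contribution by $\omega _{1}\left( f,1/n^{\alpha }\right) $. On the far block I would apply the crude estimate $\left| f(k/n)-f(x)\right| \leq 2\left\| f\right\| _{\infty }$ and then invoke Theorem~\ref{t3}, inequality~(\ref{r18}), to control the tail mass. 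The factor $2$ cancels the $\frac{1}{2}$ appearing in (\ref{r18}), leaving exactly $\frac{\left\| f\right\| _{\infty }}{\sqrt{\pi }\left( n^{1-\alpha }-2\right) e^{\left( n^{1-\alpha }-2\right) ^{2}}}$.

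Summing the two blocks reproduces the bracketed quantity, and multiplying through by the constant $4.019$ carried in from (\ref{r39}) gives part~1). Part~2) is then immediate, since the resulting bound $\mu _{1n}$ is independent of $x$, so one simply takes the supremum over $x\in \left[ a,b\right] $ to obtain (\ref{r41}). For the convergence statement, as $n\rightarrow \infty $ the first summand of $\mu _{1n}$ tends to $0$ because $f$ is uniformly continuous on the compact interval $\left[ a,b\right] $, whence $\omega _{1}\left( f,\delta \right) \rightarrow 0$ as $\delta \rightarrow 0$ (Definition~\ref{d10}), while the second summand decays super-exponentially; thus $\mu _{1n}\rightarrow 0$, yielding both the pointwise and the uniform convergence $A_{n}(f)\rightarrow f$.

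I do not expect a genuine obstacle here, as this is the classical near/far splitting used for quasi-interpolation neural network operators. The only step demanding attention is the bookkeeping on the far block: one must check that the bound $2\left\| f\right\| _{\infty }$ together with the precise constant of (\ref{r18}) reproduces the stated coefficient with no stray factor, and that the threshold $n^{1-\alpha }$ is consistent with the hypothesis $n^{1-\alpha }\geq 3$ required for Theorem~\ref{t3} to be applicable.
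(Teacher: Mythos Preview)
Your proposal is correct and follows essentially the same route as the paper: start from (\ref{r39}), split the sum into the near block $\left| k/n-x\right| \leq 1/n^{\alpha }$ and the far block, bound the near contribution by $\omega _{1}\left( f,1/n^{\alpha }\right) $ via (\ref{r31}), and bound the far contribution by $2\left\| f\right\| _{\infty }$ times the tail estimate (\ref{r18}). The bookkeeping you flag (the factor $2$ against the $\tfrac{1}{2}$ in (\ref{r18})) is exactly what the paper does, and your derivation of part~2) and of the convergence statement matches the paper's implicit reasoning.
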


\begin{proof}
Using (\ref{r39}) we get 
\begin{equation*}
\left| A_{n}\left( f,x\right) -f\left( x\right) \right| \leq \left(
4.019\right) \left[ \sum_{k=\left\lceil na\right\rceil }^{\left\lfloor
nb\right\rfloor }\left| f\left( \frac{k}{n}\right) -f\left( x\right) \right|
\chi \left( nx-k\right) \right] \leq
\end{equation*}%
\begin{equation*}
\left( 4.019\right) \left[ \sum_{\left\{ 
\begin{array}{c}
k=\left\lceil na\right\rceil \\ 
\left| \frac{k}{n}-x\right| \leq \frac{1}{n^{\alpha }}%
\end{array}%
\right. }^{\left\lfloor nb\right\rfloor }\left| f\left( \frac{k}{n}\right)
-f\left( x\right) \right| \chi \left( nx-k\right) +\right.
\end{equation*}%
\begin{equation}
\left. \sum_{\left\{ 
\begin{array}{c}
k=\left\lceil na\right\rceil \\ 
\left| \frac{k}{n}-x\right| >\frac{1}{n^{\alpha }}%
\end{array}%
\right. }^{\left\lfloor nb\right\rfloor }\left| f\left( \frac{k}{n}\right)
-f\left( x\right) \right| \chi \left( nx-k\right) \right] \leq  \tag{42}
\label{r42}
\end{equation}%
\begin{equation*}
\left( 4.019\right) \left[ \omega _{1}\left( f,\frac{1}{n^{\alpha }}\right)
\left( \sum_{k=\left\lceil na\right\rceil }^{\left\lfloor nb\right\rfloor
}\chi \left( nx-k\right) \right) +\right.
\end{equation*}%
\begin{equation}
\left. 2\left\| f\right\| _{\infty }\left( \sum_{\left\{ 
\begin{array}{c}
k=\left\lceil na\right\rceil \\ 
\left| nx-k\right| \geq n^{1-\alpha }%
\end{array}%
\right. }^{\left\lfloor nb\right\rfloor }\chi \left( nx-k\right) \right) %
\right] \overset{\text{(by (\ref{r18}), (\ref{r31}))}}{\leq }  \tag{43}
\label{r43}
\end{equation}%
\begin{equation*}
\left( 4.019\right) \left[ \omega _{1}\left( f,\frac{1}{n^{\alpha }}\right) +%
\frac{\left\| f\right\| _{\infty }}{\sqrt{\pi }\left( n^{1-\alpha }-2\right)
e^{\left( n^{1-\alpha }-2\right) ^{2}}}\right] ,
\end{equation*}%
proving the claim.
\end{proof}

We continue with

\begin{theorem}
\label{t13}Let $f\in C_{B}\left( \mathbb{R}\right) ,$ $0<\alpha <1$, $x\in 
\mathbb{R},$ $n\in \mathbb{N}$ with $n^{1-\alpha }\geq 3$. Then

1) 
\begin{equation}
\left| B_{n}\left( f,x\right) -f\left( x\right) \right| \leq \omega
_{1}\left( f,\frac{1}{n^{\alpha }}\right) +\frac{\left\| f\right\| _{\infty }%
}{\sqrt{\pi }\left( n^{1-\alpha }-2\right) e^{\left( n^{1-\alpha }-2\right)
^{2}}}=:\mu _{2n},  \tag{44}  \label{r44}
\end{equation}

2) 
\begin{equation}
\left\| B_{n}\left( f\right) -f\right\| _{\infty }\leq \mu _{2n}.  \tag{45}
\label{r45}
\end{equation}%
For $f\in \left( C_{B}\left( \mathbb{R}\right) \cap C_{u}\left( \mathbb{R}%
\right) \right) $ ($C_{u}\left( \mathbb{R}\right) $ uniformly continuous
functions on $\mathbb{R}$) we get $\underset{n\rightarrow \infty }{\lim }%
B_{n}\left( f\right) =f$, pointwise and uniformly.
\end{theorem}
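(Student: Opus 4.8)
The plan is to follow the proof of Theorem \ref{t12} almost verbatim, but now exploiting the \emph{exact} partition-of-unity identity $\sum_{k=-\infty}^{\infty}\chi(nx-k)=1$ from $(\ref{r16})$, which here holds on all of $\mathbb{R}$ and eliminates both the normalizing denominator and the factor $4.019$ that were forced on us in the $A_n$ case by its finite truncated sum. First I would note that the series defining $B_n(f,x)$ in $(\ref{r33})$ is dominated termwise by $2\|f\|_{\infty}\,\chi(nx-k)$, and since $\sum_{k}\chi(nx-k)=1$ converges, the series converges absolutely and all the rearrangements below are justified. Writing $f(x)=f(x)\sum_{k=-\infty}^{\infty}\chi(nx-k)$ via $(\ref{r16})$ and subtracting gives
\[
B_n(f,x)-f(x)=\sum_{k=-\infty}^{\infty}\left(f\!\left(\tfrac{k}{n}\right)-f(x)\right)\chi(nx-k),
\]
whence
\[
\left|B_n(f,x)-f(x)\right|\le\sum_{k=-\infty}^{\infty}\left|f\!\left(\tfrac{k}{n}\right)-f(x)\right|\chi(nx-k).
\]

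Next I would split this infinite sum according to whether $\left|\tfrac{k}{n}-x\right|\le\tfrac{1}{n^{\alpha}}$ (equivalently $|nx-k|\le n^{1-\alpha}$) or $\left|\tfrac{k}{n}-x\right|>\tfrac{1}{n^{\alpha}}$ (equivalently $|nx-k|\ge n^{1-\alpha}$). On the near part, the definition $(\ref{r37})$ of the modulus of continuity gives $\left|f(\tfrac{k}{n})-f(x)\right|\le\omega_{1}\!\left(f,\tfrac{1}{n^{\alpha}}\right)$, and since the remaining weights sum to at most $\sum_{k}\chi(nx-k)=1$, this contribution is bounded by $\omega_{1}\!\left(f,\tfrac{1}{n^{\alpha}}\right)$. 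On the far part I would bound $\left|f(\tfrac{k}{n})-f(x)\right|\le 2\|f\|_{\infty}$ and apply Theorem \ref{t3}, inequality $(\ref{r18})$, directly to the full infinite tail $\sum_{|nx-k|\ge n^{1-\alpha}}\chi(nx-k)<\tfrac{1}{2\sqrt{\pi}\,(n^{1-\alpha}-2)\,e^{(n^{1-\alpha}-2)^{2}}}$. Adding the two contributions yields exactly $\mu_{2n}$, which is part 1); taking the supremum over $x\in\mathbb{R}$ gives part 2).

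For the final convergence claim, as $n\to\infty$ the second term of $\mu_{2n}$ tends to $0$ by the Gaussian decay, while $\omega_{1}\!\left(f,\tfrac{1}{n^{\alpha}}\right)\to 0$ precisely when $f$ is uniformly continuous; this is exactly why the limit is asserted for $f\in\left(C_{B}(\mathbb{R})\cap C_{u}(\mathbb{R})\right)$ and not for every bounded continuous $f$.

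I expect essentially no serious obstacle. The only genuinely new feature relative to $A_n$ is that the far part is now a true infinite tail rather than a sub-piece of a finite sum; but Theorem \ref{t3} is stated for exactly this infinite tail, so once absolute convergence is secured the step is immediate. The one point requiring care is the justification of the rearrangement splitting $B_n(f,x)-f(x)$ into the two displayed series, which is why I would record the domination by $2\|f\|_{\infty}\,\chi(nx-k)$ at the outset.
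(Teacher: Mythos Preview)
Your proposal is correct and follows essentially the same argument as the paper's own proof: write $B_n(f,x)-f(x)=\sum_k\bigl(f(k/n)-f(x)\bigr)\chi(nx-k)$ via $(\ref{r16})$, split the sum at the threshold $|k/n-x|\le 1/n^{\alpha}$, and apply the modulus of continuity on the near part and $(\ref{r18})$ with the bound $2\|f\|_\infty$ on the far part. The only addition you make is the explicit justification of absolute convergence and of the final convergence statement, which the paper leaves implicit.
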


\begin{proof}
We see that%
\begin{equation*}
\left\vert B_{n}\left( f,x\right) -f\left( x\right) \right\vert \overset{%
\text{(by (\ref{r16}), (\ref{r33}))}}{=}\left\vert \sum_{k=-\infty }^{\infty
}\left( f\left( \frac{k}{n}\right) -f\left( x\right) \right) \chi \left(
nx-k\right) \right\vert \leq 
\end{equation*}%
\begin{equation*}
\sum_{k=-\infty }^{\infty }\left\vert f\left( \frac{k}{n}\right) -f\left(
x\right) \right\vert \chi \left( nx-k\right) \leq 
\end{equation*}%
\begin{equation*}
\sum_{\left\{ 
\begin{array}{c}
k=-\infty  \\ 
\left\vert \frac{k}{n}-x\right\vert \leq \frac{1}{n^{\alpha }}%
\end{array}%
\right. }^{\infty }\left\vert f\left( \frac{k}{n}\right) -f\left( x\right)
\right\vert \chi \left( nx-k\right) +
\end{equation*}%
\begin{equation}
\sum_{\left\{ 
\begin{array}{c}
k=-\infty  \\ 
\left\vert \frac{k}{n}-x\right\vert \geq \frac{1}{n^{\alpha }}%
\end{array}%
\right. }^{\infty }\left\vert f\left( \frac{k}{n}\right) -f\left( x\right)
\right\vert \chi \left( nx-k\right) \leq   \tag{46}  \label{r46}
\end{equation}%
\begin{equation*}
\omega _{1}\left( f,\frac{1}{n^{\alpha }}\right) \left( \sum_{\left\{ 
\begin{array}{c}
k=-\infty  \\ 
\left\vert \frac{k}{n}-x\right\vert \leq \frac{1}{n^{\alpha }}%
\end{array}%
\right. }^{\infty }\chi \left( nx-k\right) \right) +
\end{equation*}%
\begin{equation}
2\left\Vert f\right\Vert _{\infty }\left( \sum_{\left\{ 
\begin{array}{c}
k=-\infty  \\ 
\left\vert nx-k\right\vert \geq n^{1-\alpha }%
\end{array}%
\right. }^{\infty }\chi \left( nx-k\right) \right) \overset{\text{(by (\ref%
{r16}), (\ref{r18}))}}{\leq }  \tag{47}  \label{r47}
\end{equation}%
\begin{equation*}
\omega _{1}\left( f,\frac{1}{n^{\alpha }}\right) +\frac{\left\Vert
f\right\Vert _{\infty }}{\sqrt{\pi }\left( n^{1-\alpha }-2\right) e^{\left(
n^{1-\alpha }-2\right) ^{2}}}.
\end{equation*}
\end{proof}

We continue with

\begin{theorem}
\label{tt14}Let $f\in C_{B}\left( \mathbb{R}\right) ,$ $0<\alpha <1$, $x\in 
\mathbb{R},$ $n\in \mathbb{N}$ with $n^{1-\alpha }\geq 3$. Then

1) 
\begin{equation}
\left| C_{n}\left( f,x\right) -f\left( x\right) \right| \leq \omega
_{1}\left( f,\frac{1}{n}+\frac{1}{n^{\alpha }}\right) +\frac{\left\|
f\right\| _{\infty }}{\sqrt{\pi }\left( n^{1-\alpha }-2\right) e^{\left(
n^{1-\alpha }-2\right) ^{2}}}=:\mu _{3n},  \tag{48}  \label{r48}
\end{equation}

2) 
\begin{equation}
\left\| C_{n}\left( f\right) -f\right\| _{\infty }\leq \mu _{3n}.  \tag{49}
\label{r49}
\end{equation}%
For $f\in \left( C_{B}\left( \mathbb{R}\right) \cap C_{u}\left( \mathbb{R}%
\right) \right) $ we get $\underset{n\rightarrow \infty }{\lim }C_{n}\left(
f\right) =f$, pointwise and uniformly.
\end{theorem}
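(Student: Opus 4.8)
The plan is to follow the template of the proof of Theorem \ref{t13}, the only genuinely new ingredient being the averaging integral built into $C_n$. First I would exploit the partition of unity (\ref{r16}) to recenter the estimate: since $\sum_{k=-\infty}^{\infty}\chi(nx-k)=1$, I can write from (\ref{r34})
\begin{equation*}
C_n(f,x)-f(x)=\sum_{k=-\infty}^{\infty}\left(n\int_{k/n}^{(k+1)/n}\bigl(f(t)-f(x)\bigr)\,dt\right)\chi(nx-k),
\end{equation*}
and then pass to absolute values, moving them inside both the sum and the integral, to obtain
\begin{equation*}
\left|C_n(f,x)-f(x)\right|\leq\sum_{k=-\infty}^{\infty}\left(n\int_{k/n}^{(k+1)/n}\left|f(t)-f(x)\right|\,dt\right)\chi(nx-k).
\end{equation*}

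Next comes the decisive step, namely controlling the inner oscillation. For $t\in[k/n,(k+1)/n]$ the triangle inequality gives $|t-x|\leq|t-k/n|+|k/n-x|\leq\frac{1}{n}+|k/n-x|$, and this is exactly where the extra $\frac{1}{n}$ in the argument of $\omega_1$ is born. I would then split the outer sum over $k$ into a near part, where $|k/n-x|\leq\frac{1}{n^{\alpha}}$, and a far part, where $|k/n-x|\geq\frac{1}{n^{\alpha}}$ (equivalently $|nx-k|\geq n^{1-\alpha}$). On the near part $|t-x|\leq\frac{1}{n}+\frac{1}{n^{\alpha}}$, whence $|f(t)-f(x)|\leq\omega_1\!\left(f,\frac{1}{n}+\frac{1}{n^{\alpha}}\right)$; on the far part I would use the crude bound $|f(t)-f(x)|\leq 2\|f\|_{\infty}$.

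Using that $n\int_{k/n}^{(k+1)/n}dt=1$ for every $k$, the near part is then bounded by $\omega_1\!\left(f,\frac{1}{n}+\frac{1}{n^{\alpha}}\right)\sum_{k}\chi(nx-k)\leq\omega_1\!\left(f,\frac{1}{n}+\frac{1}{n^{\alpha}}\right)$ via (\ref{r16}), while the far part is bounded by $2\|f\|_{\infty}$ times the tail sum $\sum_{|nx-k|\geq n^{1-\alpha}}\chi(nx-k)$, which Theorem \ref{t3} (inequality (\ref{r18})) controls by $\frac{1}{2\sqrt{\pi}\left(n^{1-\alpha}-2\right)e^{\left(n^{1-\alpha}-2\right)^{2}}}$. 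Adding the two contributions produces $\mu_{3n}$ and establishes part 1). Part 2) follows at once by taking the supremum over $x\in\mathbb{R}$, and the pointwise and uniform convergence follows for $f\in\left(C_B(\mathbb{R})\cap C_u(\mathbb{R})\right)$ because then $\omega_1\!\left(f,\frac{1}{n}+\frac{1}{n^{\alpha}}\right)\to 0$ as $n\rightarrow\infty$, while the exponential tail term vanishes as well.

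I do not anticipate a true obstacle here; the only points requiring care are the legitimacy of interchanging the infinite sum, the integral, and the absolute value, and the bookkeeping which makes precise that averaging $f$ over an interval of length $\frac{1}{n}$ is exactly what upgrades the modulus argument from $\frac{1}{n^{\alpha}}$ in the $B_n$ estimate to $\frac{1}{n}+\frac{1}{n^{\alpha}}$ here.
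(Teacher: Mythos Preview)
Your proposal is correct and follows essentially the same approach as the paper. The only cosmetic difference is that the paper first performs the substitution $t\mapsto t+\frac{k}{n}$ to rewrite the inner integral as $\int_{0}^{1/n}f\!\left(t+\frac{k}{n}\right)dt$ before splitting, whereas you work directly on $[k/n,(k+1)/n]$; the near/far decomposition, the triangle inequality producing the extra $\frac{1}{n}$ in the modulus, and the appeal to (\ref{r16}) and (\ref{r18}) are identical.
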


\begin{proof}
We notice that 
\begin{equation}
\int_{\frac{k}{n}}^{\frac{k+1}{n}}f\left( t\right) dt=\int_{0}^{\frac{1}{n}%
}f\left( t+\frac{k}{n}\right) dt.  \tag{50}  \label{r50}
\end{equation}%
Hence we can write 
\begin{equation}
C_{n}\left( f,x\right) =\sum_{k=-\infty }^{\infty }\left( n\int_{0}^{\frac{1%
}{n}}f\left( t+\frac{k}{n}\right) dt\right) \chi \left( nx-k\right) . 
\tag{51}  \label{r51}
\end{equation}%
We observe that%
\begin{equation}
\left\vert C_{n}\left( f,x\right) -f\left( x\right) \right\vert =\left\vert
\sum_{k=-\infty }^{\infty }\left( \left( n\int_{0}^{\frac{1}{n}}f\left( t+%
\frac{k}{n}\right) dt\right) -f\left( x\right) \right) \chi \left(
nx-k\right) \right\vert =  \tag{52}  \label{r52}
\end{equation}%
\begin{equation*}
\left\vert \sum_{k=-\infty }^{\infty }\left( n\int_{0}^{\frac{1}{n}}\left(
f\left( t+\frac{k}{n}\right) -f\left( x\right) \right) dt\right) \chi \left(
nx-k\right) \right\vert \leq 
\end{equation*}%
\begin{equation*}
\sum_{k=-\infty }^{\infty }\left( n\int_{0}^{\frac{1}{n}}\left\vert f\left(
t+\frac{k}{n}\right) -f\left( x\right) \right\vert dt\right) \chi \left(
nx-k\right) \leq 
\end{equation*}%
\begin{equation}
\sum_{\left\{ 
\begin{array}{c}
k=-\infty  \\ 
\left\vert x-\frac{k}{n}\right\vert \leq \frac{1}{n^{\alpha }}%
\end{array}%
\right. }^{\infty }\left( n\int_{0}^{\frac{1}{n}}\left\vert f\left( t+\frac{k%
}{n}\right) -f\left( x\right) \right\vert dt\right) \chi \left( nx-k\right) +
\tag{53}  \label{r53}
\end{equation}%
\begin{equation*}
\sum_{\left\{ 
\begin{array}{c}
k=-\infty  \\ 
\left\vert x-\frac{k}{n}\right\vert \geq \frac{1}{n^{\alpha }}%
\end{array}%
\right. }^{\infty }\left( n\int_{0}^{\frac{1}{n}}\left\vert f\left( t+\frac{k%
}{n}\right) -f\left( x\right) \right\vert dt\right) \chi \left( nx-k\right)
\leq 
\end{equation*}%
\begin{equation*}
\sum_{\left\{ 
\begin{array}{c}
k=-\infty  \\ 
\left\vert x-\frac{k}{n}\right\vert \leq \frac{1}{n^{\alpha }}%
\end{array}%
\right. }^{\infty }\left( n\int_{0}^{\frac{1}{n}}\omega _{1}\left(
f,\left\vert t+\frac{k}{n}-x\right\vert \right) dt\right) \chi \left(
nx-k\right) +
\end{equation*}%
\begin{equation}
2\left\Vert f\right\Vert _{\infty }\left( \sum_{\left\{ 
\begin{array}{c}
k=-\infty  \\ 
\left\vert nx-k\right\vert \geq n^{1-\alpha }%
\end{array}%
\right. }^{\infty }\chi \left( \left\vert nx-k\right\vert \right) \right)
\leq   \tag{54}  \label{r54}
\end{equation}%
\begin{equation*}
\sum_{\left\{ 
\begin{array}{c}
k=-\infty  \\ 
\left\vert nx-k\right\vert \leq n^{1-\alpha }%
\end{array}%
\right. }^{\infty }\left( n\int_{0}^{\frac{1}{n}}\omega _{1}\left(
f,\left\vert t\right\vert +\frac{1}{n^{\alpha }}\right) dt\right) \chi
\left( nx-k\right) 
\end{equation*}%
\begin{equation*}
+\frac{\left\Vert f\right\Vert _{\infty }}{\sqrt{\pi }\left( n^{1-\alpha
}-2\right) e^{\left( n^{1-\alpha }-2\right) ^{2}}}\leq 
\end{equation*}%
\begin{equation}
\omega _{1}\left( f,\frac{1}{n}+\frac{1}{n^{\alpha }}\right) \left(
\sum_{\left\{ 
\begin{array}{c}
k=-\infty  \\ 
\left\vert nx-k\right\vert \leq n^{1-\alpha }%
\end{array}%
\right. }^{\infty }\chi \left( nx-k\right) \right)   \tag{55}  \label{r55}
\end{equation}%
\begin{equation*}
+\frac{\left\Vert f\right\Vert _{\infty }}{\sqrt{\pi }\left( n^{1-\alpha
}-2\right) e^{\left( n^{1-\alpha }-2\right) ^{2}}}\leq 
\end{equation*}%
\begin{equation*}
\omega _{1}\left( f,\frac{1}{n}+\frac{1}{n^{\alpha }}\right) +\frac{%
\left\Vert f\right\Vert _{\infty }}{\sqrt{\pi }\left( n^{1-\alpha }-2\right)
e^{\left( n^{1-\alpha }-2\right) ^{2}}},
\end{equation*}%
proving the claim.
\end{proof}

We give next

\begin{theorem}
\label{tt15}Let $f\in C_{B}\left( \mathbb{R}\right) ,$ $0<\alpha <1$, $x\in 
\mathbb{R},$ $n\in \mathbb{N}$ with $n^{1-\alpha }\geq 3$. Then

1) 
\begin{equation}
\left| D_{n}\left( f,x\right) -f\left( x\right) \right| \leq \mu _{3n}, 
\tag{56}  \label{r56}
\end{equation}%
and

2) 
\begin{equation}
\left\| D_{n}\left( f\right) -f\right\| _{\infty }\leq \mu _{3n},  \tag{57}
\label{r57}
\end{equation}%
where $\mu _{3n}$ as in (\ref{r48}).

For $f\in \left( C_{B}\left( \mathbb{R}\right) \cap C_{u}\left( \mathbb{R}%
\right) \right) $ we get $\underset{n\rightarrow \infty }{\lim }D_{n}\left(
f\right) =f$, pointwise and uniformly.
\end{theorem}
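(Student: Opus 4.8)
The plan is to mimic the proofs of Theorems \ref{t13} and \ref{tt14}, exploiting the fact that each weight $\delta_{nk}(f)$ in $(\ref{r35})$ is a convex combination of values of $f$. First I would use $\sum_{r=0}^{\theta} w_r = 1$ together with $(\ref{r16})$ to rewrite the error from $(\ref{r36})$ as
\begin{equation*}
D_n(f,x) - f(x) = \sum_{k=-\infty}^{\infty} \left(\sum_{r=0}^{\theta} w_r \left(f\left(\tfrac{k}{n} + \tfrac{r}{n\theta}\right) - f(x)\right)\right) \chi(nx-k),
\end{equation*}
whence, by the triangle inequality and $w_r \geq 0$,
\begin{equation*}
\left|D_n(f,x) - f(x)\right| \leq \sum_{k=-\infty}^{\infty} \sum_{r=0}^{\theta} w_r \left|f\left(\tfrac{k}{n} + \tfrac{r}{n\theta}\right) - f(x)\right| \chi(nx-k).
\end{equation*}

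The key observation, which replaces the integral estimate of Theorem \ref{tt14}, is that every sample point sits within $\frac{1}{n}$ of $\frac{k}{n}$: since $0 \leq r \leq \theta$ we have $0 \leq \frac{r}{n\theta} \leq \frac{1}{n}$, so that
\begin{equation*}
\left|\tfrac{k}{n} + \tfrac{r}{n\theta} - x\right| \leq \left|\tfrac{k}{n} - x\right| + \tfrac{1}{n}.
\end{equation*}
I would then split the outer sum over $k$ according to whether $\left|\frac{k}{n} - x\right| \leq \frac{1}{n^{\alpha}}$ (equivalently $|nx-k| \leq n^{1-\alpha}$) or not. On the near terms the displayed bound gives $\left|\frac{k}{n} + \frac{r}{n\theta} - x\right| \leq \frac{1}{n} + \frac{1}{n^{\alpha}}$, so each difference is at most $\omega_1\!\left(f, \frac{1}{n} + \frac{1}{n^{\alpha}}\right)$ by $(\ref{r37})$; factoring out $\sum_r w_r = 1$ and using $\sum_k \chi(nx-k) = 1$ from $(\ref{r16})$ then bounds this piece by $\omega_1\!\left(f, \frac{1}{n} + \frac{1}{n^{\alpha}}\right)$.

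For the far terms I would bound each difference crudely by $2\|f\|_{\infty}$, again absorb $\sum_r w_r = 1$, and apply Theorem \ref{t3} to the remaining tail $\sum_{|nx-k| \geq n^{1-\alpha}} \chi(nx-k)$, which by $(\ref{r18})$ is strictly less than $\frac{1}{2\sqrt{\pi}(n^{1-\alpha}-2)e^{(n^{1-\alpha}-2)^2}}$; this produces the second summand of $\mu_{3n}$. Adding the two pieces establishes part 1), and taking the supremum over $x \in \mathbb{R}$ gives part 2). I anticipate no real obstacle: the only point needing care is interchanging the finite $r$-sum with the $k$-sum inside the triangle inequality, which is legitimate since the $w_r$ are nonnegative and the inner sum is finite. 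The convergence of $D_n(f)$ to $f$ for $f \in C_B(\mathbb{R}) \cap C_u(\mathbb{R})$ then follows, because $\omega_1\!\left(f, \frac{1}{n} + \frac{1}{n^{\alpha}}\right) \to 0$ and the exponential tail vanishes as $n \to \infty$.
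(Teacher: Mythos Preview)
Your proposal is correct and follows essentially the same route as the paper: rewrite $D_n(f,x)-f(x)$ via $\sum_r w_r=1$ and (\ref{r16}), apply the triangle inequality, split the $k$-sum according to $|k/n-x|\le 1/n^{\alpha}$, use $r/(n\theta)\le 1/n$ on the near part to get $\omega_1(f,1/n+1/n^{\alpha})$, and invoke (\ref{r18}) with the crude bound $2\|f\|_\infty$ on the far part. The paper's argument is line-for-line the same, so there is nothing to add.
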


\begin{proof}
We see that%
\begin{equation*}
\left| D_{n}\left( f,x\right) -f\left( x\right) \right| \overset{\text{(by (%
\ref{r35}), (\ref{r36}))}}{=}
\end{equation*}%
\begin{equation*}
\left| \sum_{k=-\infty }^{\infty }\left( \left( \sum_{r=0}^{\theta
}w_{r}f\left( \frac{k}{n}+\frac{r}{n\theta }\right) \right) -f\left(
x\right) \right) \chi \left( nx-k\right) \right| =
\end{equation*}%
\begin{equation}
\left| \sum_{k=-\infty }^{\infty }\left( \sum_{r=0}^{\theta }w_{r}\left(
f\left( \frac{k}{n}+\frac{r}{n\theta }\right) -f\left( x\right) \right)
\right) \chi \left( nx-k\right) \right| \leq  \tag{58}  \label{r58}
\end{equation}%
\begin{equation*}
\sum_{k=-\infty }^{\infty }\left( \sum_{r=0}^{\theta }w_{r}\left| f\left( 
\frac{k}{n}+\frac{r}{n\theta }\right) -f\left( x\right) \right| \right) \chi
\left( nx-k\right) \leq
\end{equation*}%
\begin{equation}
\sum_{\left\{ 
\begin{array}{c}
k=-\infty \\ 
\left| \frac{k}{n}-x\right| \leq \frac{1}{n^{\alpha }}%
\end{array}%
\right. }^{\infty }\left( \sum_{r=0}^{\theta }w_{r}\left| f\left( \frac{k}{n}%
+\frac{r}{n\theta }\right) -f\left( x\right) \right| \right) \chi \left(
nx-k\right) +  \tag{59}  \label{r59}
\end{equation}%
\begin{equation*}
2\left\| f\right\| _{\infty }\sum_{\left\{ 
\begin{array}{c}
k=-\infty \\ 
\left| nx-k\right| \geq n^{1-\alpha }%
\end{array}%
\right. }^{\infty }\chi \left( \left| nx-k\right| \right) \leq
\end{equation*}%
(see that $\frac{r}{n\theta }\leq \frac{1}{n}$)%
\begin{equation*}
\sum_{\left\{ 
\begin{array}{c}
k=-\infty \\ 
\left| \frac{k}{n}-x\right| \leq \frac{1}{n^{\alpha }}%
\end{array}%
\right. }^{\infty }\left( \sum_{r=0}^{\theta }w_{r}\omega _{1}\left( f,\frac{%
1}{n^{\alpha }}+\frac{1}{n}\right) \right) \chi \left( nx-k\right) +
\end{equation*}%
\begin{equation}
\frac{\left\| f\right\| _{\infty }}{\sqrt{\pi }\left( n^{1-\alpha }-2\right)
e^{\left( n^{1-\alpha }-2\right) ^{2}}}\leq  \tag{60}  \label{r60}
\end{equation}%
\begin{equation*}
\omega _{1}\left( f,\frac{1}{n^{\alpha }}+\frac{1}{n}\right) +\frac{\left\|
f\right\| _{\infty }}{\sqrt{\pi }\left( n^{1-\alpha }-2\right) e^{\left(
n^{1-\alpha }-2\right) ^{2}}}=\mu _{3n},
\end{equation*}%
proving the claim.
\end{proof}

In the next we discuss high order of approximation by using the smoothness
of $f$.

\begin{theorem}
\label{t16}Let $f\in C^{N}\left( \left[ a,b\right] \right) $, $n,N\in 
\mathbb{N}$, $n^{1-\alpha }\geq 3,$ $0<\alpha <1$, $x\in \left[ a,b\right] $%
. Then

i) 
\begin{equation}
\left\vert A_{n}\left( f,x\right) -f\left( x\right) \right\vert \leq \left(
4.019\right) \cdot   \tag{61}  \label{r61}
\end{equation}%
\begin{equation*}
\left\{ \sum_{j=1}^{N}\frac{\left\vert f^{\left( j\right) }\left( x\right)
\right\vert }{j!}\left[ \frac{1}{n^{\alpha j}}+\frac{\left( b-a\right) ^{j}}{%
2\sqrt{\pi }\left( n^{1-\alpha }-2\right) e^{\left( n^{1-\alpha }-2\right)
^{2}}}\right] +\right. 
\end{equation*}%
\begin{equation*}
\left. \left[ \omega _{1}\left( f^{\left( N\right) },\frac{1}{n^{\alpha }}%
\right) \frac{1}{n^{\alpha N}N!}+\frac{\left\Vert f^{\left( N\right)
}\right\Vert _{\infty }\left( b-a\right) ^{N}}{N!\sqrt{\pi }\left(
n^{1-\alpha }-2\right) e^{\left( n^{1-\alpha }-2\right) ^{2}}}\right]
\right\} ,
\end{equation*}

ii) assume further $f^{\left( j\right) }\left( x_{0}\right) =0$, $j=1,...,N$%
, for some $x_{0}\in \left[ a,b\right] $, it holds 
\begin{equation}
\left| A_{n}\left( f,x_{0}\right) -f\left( x_{0}\right) \right| \leq \left(
4.019\right) \cdot  \tag{62}  \label{r62}
\end{equation}%
\begin{equation*}
\left[ \omega _{1}\left( f^{\left( N\right) },\frac{1}{n^{\alpha }}\right) 
\frac{1}{n^{\alpha N}N!}+\frac{\left\| f^{\left( N\right) }\right\| _{\infty
}\left( b-a\right) ^{N}}{N!\sqrt{\pi }\left( n^{1-\alpha }-2\right)
e^{\left( n^{1-\alpha }-2\right) ^{2}}}\right] ,
\end{equation*}%
notice here the extremely high rate of convergence at $n^{-\left( N+1\right)
\alpha },$

iii) 
\begin{equation}
\left\| A_{n}\left( f\right) -f\right\| _{\infty }\leq \left( 4.019\right)
\cdot  \tag{63}  \label{r63}
\end{equation}%
\begin{equation*}
\left\{ \sum_{j=1}^{N}\frac{\left\| f^{\left( j\right) }\right\| _{\infty }}{%
j!}\left[ \frac{1}{n^{\alpha j}}+\frac{\left( b-a\right) ^{j}}{2\sqrt{\pi }%
\left( n^{1-\alpha }-2\right) e^{\left( n^{1-\alpha }-2\right) ^{2}}}\right]
+\right.
\end{equation*}%
\begin{equation*}
\left. \left[ \omega _{1}\left( f^{\left( N\right) },\frac{1}{n^{\alpha }}%
\right) \frac{1}{n^{\alpha N}N!}+\frac{\left\| f^{\left( N\right) }\right\|
_{\infty }\left( b-a\right) ^{N}}{N!\sqrt{\pi }\left( n^{1-\alpha }-2\right)
e^{\left( n^{1-\alpha }-2\right) ^{2}}}\right] \right\} .
\end{equation*}
\end{theorem}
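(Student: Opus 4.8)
The plan is to combine the reduction already carried out in Remark~\ref{re11} with a Taylor expansion of $f$ about the point $x$. By (\ref{r39}), the factor $4.019$ coming from (\ref{r23}) is extracted at the outset, so it suffices to estimate $\left|\sum_{k=\left\lceil na\right\rceil}^{\left\lfloor nb\right\rfloor}\left(f\left(\frac{k}{n}\right)-f(x)\right)\chi(nx-k)\right|$. Since $f\in C^{N}([a,b])$, Taylor's theorem with integral remainder gives, for each admissible $k$,
\begin{equation*}
f\left(\frac{k}{n}\right)-f(x)=\sum_{j=1}^{N}\frac{f^{(j)}(x)}{j!}\left(\frac{k}{n}-x\right)^{j}+\mathcal{R}_{N,k},
\end{equation*}
where $\mathcal{R}_{N,k}=\int_{x}^{k/n}\left(f^{(N)}(t)-f^{(N)}(x)\right)\frac{(k/n-t)^{N-1}}{(N-1)!}\,dt$. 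Substituting and applying the triangle inequality, the inner quantity splits into a sum of $N$ \emph{moment terms} $\frac{|f^{(j)}(x)|}{j!}\left|\sum_{k}\left(\frac{k}{n}-x\right)^{j}\chi(nx-k)\right|$ and one \emph{remainder term} $\sum_{k}|\mathcal{R}_{N,k}|\chi(nx-k)$. The $j=0$ Taylor term disappears against $-f(x)$, which is why the sum starts at $j=1$.

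For each moment term I would split the summation index according to whether $\left|\frac{k}{n}-x\right|\le\frac{1}{n^{\alpha}}$ (the \emph{near} indices) or $\left|\frac{k}{n}-x\right|>\frac{1}{n^{\alpha}}$, equivalently $|nx-k|\ge n^{1-\alpha}$ (the \emph{far} indices). On the near indices one has $\left|\frac{k}{n}-x\right|^{j}\le n^{-\alpha j}$ while the residual $\chi$-sum is at most $1$ by (\ref{r31}); on the far indices one uses $\left|\frac{k}{n}-x\right|^{j}\le(b-a)^{j}$ together with the tail estimate (\ref{r18}) of Theorem~\ref{t3}. This produces
\begin{equation*}
\left|\sum_{k}\left(\frac{k}{n}-x\right)^{j}\chi(nx-k)\right|\le\frac{1}{n^{\alpha j}}+\frac{(b-a)^{j}}{2\sqrt{\pi}\left(n^{1-\alpha}-2\right)e^{(n^{1-\alpha}-2)^{2}}},
\end{equation*}
which is exactly the bracket multiplying $\frac{|f^{(j)}(x)|}{j!}$ in (\ref{r61}).

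The remainder term is handled by the standard estimate $|\mathcal{R}_{N,k}|\le\frac{|k/n-x|^{N}}{N!}\,\omega_{1}\!\left(f^{(N)},\left|\frac{k}{n}-x\right|\right)$, obtained by bounding $|f^{(N)}(t)-f^{(N)}(x)|\le\omega_{1}(f^{(N)},|t-x|)$ inside the integral and integrating the power weight. Splitting once more into near and far indices: on the near indices $\omega_{1}(f^{(N)},|k/n-x|)\le\omega_{1}(f^{(N)},n^{-\alpha})$ and $|k/n-x|^{N}\le n^{-\alpha N}$, with $\chi$-sum at most $1$; on the far indices $\omega_{1}(f^{(N)},|k/n-x|)\le 2\left\|f^{(N)}\right\|_{\infty}$ and $|k/n-x|^{N}\le(b-a)^{N}$, and the $\chi$-sum is controlled by (\ref{r18}). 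The crucial arithmetic point is that the factor $2$ in $2\left\|f^{(N)}\right\|_{\infty}$ cancels precisely the $\frac{1}{2}$ in (\ref{r18}), leaving the clean term $\frac{\left\|f^{(N)}\right\|_{\infty}(b-a)^{N}}{N!\sqrt{\pi}(n^{1-\alpha}-2)e^{(n^{1-\alpha}-2)^{2}}}$ with no stray constant. Collecting the near and far pieces of the remainder and reinstating the overall factor $4.019$ extracted in (\ref{r39}) yields (i).

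Part (ii) is immediate: the hypothesis $f^{(j)}(x_{0})=0$ for $j=1,\dots,N$ annihilates every moment term, so only the remainder contribution survives, whose leading (near) part is $n^{-\alpha N}\omega_{1}(f^{(N)},n^{-\alpha})/N!$, i.e. essentially of order $n^{-(N+1)\alpha}$. Part (iii) follows by replacing each $|f^{(j)}(x)|$ with $\left\|f^{(j)}\right\|_{\infty}$ and taking the supremum over $x\in[a,b]$, the right-hand side of (i) being uniform in $x$. I expect no genuine analytic obstacle beyond Taylor's theorem and the already-established tail bound; the only delicate step is the bookkeeping of the double (moment $\times$ near/far) splitting, ensuring every residual $\chi$-sum is dominated either by (\ref{r31}) or by (\ref{r18}), and verifying the factor-of-$2$ cancellation described above.
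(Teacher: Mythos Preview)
Your proposal is correct and follows essentially the same approach as the paper: extract the constant via (\ref{r39}), apply Taylor's formula with integral remainder, and treat the resulting moment terms $A_{n}^{\ast}((\cdot-x)^{j})$ and the remainder $\Lambda_{n}(x)$ by the same near/far splitting governed by (\ref{r31}) and (\ref{r18}). The paper bounds the remainder integral directly in the two cases (citing \cite{3} for the near case and computing the far case as $2\|f^{(N)}\|_{\infty}(b-a)^{N}/N!$), while you package both into the single estimate $|\mathcal{R}_{N,k}|\le\frac{|k/n-x|^{N}}{N!}\omega_{1}(f^{(N)},|k/n-x|)$ and then specialize; this is a cosmetic difference only, and your observation about the factor-of-$2$ cancellation with (\ref{r18}) matches the paper's computation exactly.
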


\begin{proof}
We use (\ref{r39}).

Call 
\begin{equation*}
A_{n}^{\ast }\left( f,x\right) :=\sum_{k=\left\lceil na\right\rceil
}^{\left\lfloor nb\right\rfloor }f\left( \frac{k}{n}\right) \chi \left(
nx-k\right) ,
\end{equation*}%
that is 
\begin{equation*}
A_{n}\left( f,x\right) =\frac{A_{n}^{\ast }\left( f,x\right) }{%
\sum_{k=\left\lceil na\right\rceil }^{\left\lfloor nb\right\rfloor }\chi
\left( nx-k\right) }.
\end{equation*}

Next we apply Taylor's formula with integral remainder.

We have (here $\frac{k}{n},x\in \left[ a,b\right] $) 
\begin{equation*}
f\left( \frac{k}{n}\right) =\sum_{j=0}^{N}\frac{f^{\left( j\right) }\left(
x\right) }{j!}\left( \frac{k}{n}-x\right) ^{j}+\int_{x}^{\frac{k}{n}}\left(
f^{\left( N\right) }\left( t\right) -f^{\left( N\right) }\left( x\right)
\right) \frac{\left( \frac{k}{n}-t\right) ^{N-1}}{\left( N-1\right) !}dt.
\end{equation*}%
Then 
\begin{equation*}
f\left( \frac{k}{n}\right) \chi \left( nx-k\right) =\sum_{j=0}^{N}\frac{%
f^{\left( j\right) }\left( x\right) }{j!}\chi \left( nx-k\right) \left( 
\frac{k}{n}-x\right) ^{j}+
\end{equation*}%
\begin{equation*}
\chi \left( nx-k\right) \int_{x}^{\frac{k}{n}}\left( f^{\left( N\right)
}\left( t\right) -f^{\left( N\right) }\left( x\right) \right) \frac{\left( 
\frac{k}{n}-t\right) ^{N-1}}{\left( N-1\right) !}dt.
\end{equation*}%
Hence 
\begin{equation*}
\sum_{k=\left\lceil na\right\rceil }^{\left\lfloor nb\right\rfloor }f\left( 
\frac{k}{n}\right) \chi \left( nx-k\right) -f\left( x\right)
\sum_{k=\left\lceil na\right\rceil }^{\left\lfloor nb\right\rfloor }\chi
\left( nx-k\right) =
\end{equation*}%
\begin{equation*}
\sum_{j=1}^{N}\frac{f^{\left( j\right) }\left( x\right) }{j!}%
\sum_{k=\left\lceil na\right\rceil }^{\left\lfloor nb\right\rfloor }\chi
\left( nx-k\right) \left( \frac{k}{n}-x\right) ^{j}+
\end{equation*}%
\begin{equation*}
\sum_{k=\left\lceil na\right\rceil }^{\left\lfloor nb\right\rfloor }\chi
\left( nx-k\right) \int_{x}^{\frac{k}{n}}\left( f^{\left( N\right) }\left(
t\right) -f^{\left( N\right) }\left( x\right) \right) \frac{\left( \frac{k}{n%
}-t\right) ^{N-1}}{\left( N-1\right) !}dt.
\end{equation*}%
Thus 
\begin{equation}
A_{n}^{\ast }\left( f,x\right) -f\left( x\right) \left( \sum_{k=\left\lceil
na\right\rceil }^{\left\lfloor nb\right\rfloor }\chi \left( nx-k\right)
\right) =\sum_{j=1}^{N}\frac{f^{\left( j\right) }\left( x\right) }{j!}%
A_{n}^{\ast }\left( \left( \cdot -x\right) ^{j}\right) +\Lambda _{n}\left(
x\right) ,  \tag{64}  \label{r64}
\end{equation}%
where 
\begin{equation}
\Lambda _{n}\left( x\right) :=\sum_{k=\left\lceil na\right\rceil
}^{\left\lfloor nb\right\rfloor }\chi \left( nx-k\right) \int_{x}^{\frac{k}{n%
}}\left( f^{\left( N\right) }\left( t\right) -f^{\left( N\right) }\left(
x\right) \right) \frac{\left( \frac{k}{n}-t\right) ^{N-1}}{\left( N-1\right)
!}dt.  \tag{65}  \label{r65}
\end{equation}%
We assume that $b-a>\frac{1}{n^{\alpha }}$, which is always the case for
large enough $n\in \mathbb{N}$, that is when $n>\left\lceil \left(
b-a\right) ^{-\frac{1}{\alpha }}\right\rceil .$

Thus $\left| \frac{k}{n}-x\right| \leq \frac{1}{n^{\alpha }}$ or $\left| 
\frac{k}{n}-x\right| >\frac{1}{n^{\alpha }}$.

As in \cite{3}, pp. 72-73 for 
\begin{equation}
\gamma :=\int_{x}^{\frac{k}{n}}\left( f^{\left( N\right) }\left( t\right)
-f^{\left( N\right) }\left( x\right) \right) \frac{\left( \frac{k}{n}%
-t\right) ^{N-1}}{\left( N-1\right) !}dt,  \tag{66}  \label{r66}
\end{equation}%
in case of $\left| \frac{k}{n}-x\right| \leq \frac{1}{n^{\alpha }}$, we find
that 
\begin{equation*}
\left| \gamma \right| \leq \omega _{1}\left( f^{\left( N\right) },\frac{1}{%
n^{\alpha }}\right) \frac{1}{n^{\alpha N}N!}
\end{equation*}%
(for $x\leq \frac{k}{n}$ or $x\geq \frac{k}{n}$).

Notice also for $x\leq \frac{k}{n}$ that 
\begin{equation*}
\left| \int_{x}^{\frac{k}{n}}\left( f^{\left( N\right) }\left( t\right)
-f^{\left( N\right) }\left( x\right) \right) \frac{\left( \frac{k}{n}%
-t\right) ^{N-1}}{\left( N-1\right) !}dt\right| \leq
\end{equation*}%
\begin{equation*}
\int_{x}^{\frac{k}{n}}\left| f^{\left( N\right) }\left( t\right) -f^{\left(
N\right) }\left( x\right) \right| \frac{\left( \frac{k}{n}-t\right) ^{N-1}}{%
\left( N-1\right) !}dt\leq
\end{equation*}%
\begin{equation*}
2\left\| f^{\left( N\right) }\right\| _{\infty }\int_{x}^{\frac{k}{n}}\frac{%
\left( \frac{k}{n}-t\right) ^{N-1}}{\left( N-1\right) !}dt=2\left\|
f^{\left( N\right) }\right\| _{\infty }\frac{\left( \frac{k}{n}-x\right) ^{N}%
}{N!}\leq 2\left\| f^{\left( N\right) }\right\| _{\infty }\frac{\left(
b-a\right) ^{N}}{N!}.
\end{equation*}%
Next assume $\frac{k}{n}\leq x$, then 
\begin{equation*}
\left| \int_{x}^{\frac{k}{n}}\left( f^{\left( N\right) }\left( t\right)
-f^{\left( N\right) }\left( x\right) \right) \frac{\left( \frac{k}{n}%
-t\right) ^{N-1}}{\left( N-1\right) !}dt\right| =
\end{equation*}%
\begin{equation*}
\left| \int_{\frac{k}{n}}^{x}\left( f^{\left( N\right) }\left( t\right)
-f^{\left( N\right) }\left( x\right) \right) \frac{\left( \frac{k}{n}%
-t\right) ^{N-1}}{\left( N-1\right) !}dt\right| \leq
\end{equation*}%
\begin{equation*}
\int_{\frac{k}{n}}^{x}\left| f^{\left( N\right) }\left( t\right) -f^{\left(
N\right) }\left( x\right) \right| \frac{\left( t-\frac{k}{n}\right) ^{N-1}}{%
\left( N-1\right) !}dt\leq
\end{equation*}%
\begin{equation*}
2\left\| f^{\left( N\right) }\right\| _{\infty }\int_{\frac{k}{n}}^{x}\frac{%
\left( t-\frac{k}{n}\right) ^{N-1}}{\left( N-1\right) !}dt=2\left\|
f^{\left( N\right) }\right\| _{\infty }\frac{\left( x-\frac{k}{n}\right) ^{N}%
}{N!}\leq 2\left\| f^{\left( N\right) }\right\| _{\infty }\frac{\left(
b-a\right) ^{N}}{N!}.
\end{equation*}%
Thus 
\begin{equation}
\left| \gamma \right| \leq 2\left\| f^{\left( N\right) }\right\| _{\infty }%
\frac{\left( b-a\right) ^{N}}{N!},  \tag{67}  \label{r67}
\end{equation}%
in all two cases.

Therefore 
\begin{equation*}
\Lambda _{n}\left( x\right) =\sum_{\substack{ k=\left\lceil na\right\rceil 
\\ \left| \frac{k}{n}-x\right| \leq \frac{1}{n^{\alpha }}}}^{\left\lfloor
nb\right\rfloor }\chi \left( nx-k\right) \gamma +\sum_{\substack{ %
k=\left\lceil na\right\rceil  \\ \left| \frac{k}{n}-x\right| >\frac{1}{%
n^{\alpha }}}}^{\left\lfloor nb\right\rfloor }\chi \left( nx-k\right) \gamma
.
\end{equation*}%
Hence 
\begin{equation*}
\left| \Lambda _{n}\left( x\right) \right| \leq \sum_{\substack{ %
k=\left\lceil na\right\rceil  \\ \left| \frac{k}{n}-x\right| \leq \frac{1}{%
n^{\alpha }}}}^{\left\lfloor nb\right\rfloor }\chi \left( nx-k\right) \left(
\omega _{1}\left( f^{\left( N\right) },\frac{1}{n^{\alpha }}\right) \frac{1}{%
N!n^{N\alpha }}\right) +
\end{equation*}%
\begin{equation*}
\left( \sum_{\substack{ k=\left\lceil na\right\rceil  \\ \left| \frac{k}{n}%
-x\right| >\frac{1}{n^{\alpha }}}}^{\left\lfloor nb\right\rfloor }\chi
\left( nx-k\right) \right) 2\left\| f^{\left( N\right) }\right\| _{\infty }%
\frac{\left( b-a\right) ^{N}}{N!}\leq
\end{equation*}%
\begin{equation*}
\omega _{1}\left( f^{\left( N\right) },\frac{1}{n^{\alpha }}\right) \frac{1}{%
N!n^{N\alpha }}+\left\| f^{\left( N\right) }\right\| _{\infty }\frac{\left(
b-a\right) ^{N}}{N!}\frac{1}{\sqrt{\pi }\left( n^{1-\alpha }-2\right)
e^{\left( n^{1-\alpha }-2\right) ^{2}}}.
\end{equation*}%
Consequently we have 
\begin{equation}
\left| \Lambda _{n}\left( x\right) \right| \leq \omega _{1}\left( f^{\left(
N\right) },\frac{1}{n^{\alpha }}\right) \frac{1}{n^{\alpha N}N!}+\frac{%
\left\| f^{\left( N\right) }\right\| _{\infty }\left( b-a\right) ^{N}}{N!%
\sqrt{\pi }\left( n^{1-\alpha }-2\right) e^{\left( n^{1-\alpha }-2\right)
^{2}}}.  \tag{68}  \label{r68}
\end{equation}%
We further see that 
\begin{equation*}
A_{n}^{\ast }\left( \left( \cdot -x\right) ^{j}\right) =\sum_{k=\left\lceil
na\right\rceil }^{\left\lfloor nb\right\rfloor }\chi \left( nx-k\right)
\left( \frac{k}{n}-x\right) ^{j}.
\end{equation*}%
Therefore 
\begin{equation*}
\left| A_{n}^{\ast }\left( \left( \cdot -x\right) ^{j}\right) \right| \leq
\sum_{k=\left\lceil na\right\rceil }^{\left\lfloor nb\right\rfloor }\chi
\left( nx-k\right) \left| \frac{k}{n}-x\right| ^{j}=
\end{equation*}%
\begin{equation*}
\sum_{\left\{ 
\begin{array}{c}
k=\left\lceil na\right\rceil \\ 
\left| \frac{k}{n}-x\right| \leq \frac{1}{n^{\alpha }}%
\end{array}%
\right. }^{\left\lfloor nb\right\rfloor }\chi \left( nx-k\right) \left| 
\frac{k}{n}-x\right| ^{j}+\sum_{\left\{ 
\begin{array}{c}
k=\left\lceil na\right\rceil \\ 
\left| \frac{k}{n}-x\right| >\frac{1}{n^{\alpha }}%
\end{array}%
\right. }^{\left\lfloor nb\right\rfloor }\chi \left( nx-k\right) \left| 
\frac{k}{n}-x\right| ^{j}\leq
\end{equation*}%
\begin{equation*}
\frac{1}{n^{\alpha j}}\sum_{\left\{ 
\begin{array}{c}
k=\left\lceil na\right\rceil \\ 
\left| \frac{k}{n}-x\right| \leq \frac{1}{n^{\alpha }}%
\end{array}%
\right. }^{\left\lfloor nb\right\rfloor }\chi \left( nx-k\right) +\left(
b-a\right) ^{j}\cdot \sum_{\left\{ 
\begin{array}{c}
k=\left\lceil na\right\rceil \\ 
\left| k-nx\right| >n^{1-\alpha }%
\end{array}%
\right. }^{\left\lfloor nb\right\rfloor }\chi \left( nx-k\right)
\end{equation*}%
\begin{equation*}
\leq \frac{1}{n^{\alpha j}}+\left( b-a\right) ^{j}\cdot \frac{1}{2\sqrt{\pi }%
\left( n^{1-\alpha }-2\right) e^{\left( n^{1-\alpha }-2\right) ^{2}}}.
\end{equation*}%
Hence 
\begin{equation}
\left| A_{n}^{\ast }\left( \left( \cdot -x\right) ^{j}\right) \right| \leq 
\frac{1}{n^{\alpha j}}+\frac{\left( b-a\right) ^{j}}{2\sqrt{\pi }\left(
n^{1-\alpha }-2\right) e^{\left( n^{1-\alpha }-2\right) ^{2}}},  \tag{69}
\label{r69}
\end{equation}%
for $j=1,...,N.$

Putting things together we have proved 
\begin{equation*}
\left| A_{n}^{\ast }\left( f,x\right) -f\left( x\right) \sum_{k=\left\lceil
na\right\rceil }^{\left\lfloor nb\right\rfloor }\chi \left( nx-k\right)
\right| \leq
\end{equation*}%
\begin{equation}
\dsum\limits_{j=1}^{N}\frac{\left| f^{\left( j\right) }\left( x\right)
\right| }{j!}\left[ \frac{1}{n^{\alpha j}}+\frac{\left( b-a\right) ^{j}}{2%
\sqrt{\pi }\left( n^{1-\alpha }-2\right) e^{\left( n^{1-\alpha }-2\right)
^{2}}}\right] +  \tag{70}  \label{r70}
\end{equation}%
\begin{equation*}
\left[ \omega _{1}\left( f^{\left( N\right) },\frac{1}{n^{\alpha }}\right) 
\frac{1}{n^{\alpha N}N!}+\frac{\left\| f^{\left( N\right) }\right\| _{\infty
}\left( b-a\right) ^{N}}{N!\sqrt{\pi }\left( n^{1-\alpha }-2\right)
e^{\left( n^{1-\alpha }-2\right) ^{2}}}\right] ,
\end{equation*}%
that is establishing theorem.
\end{proof}

\section{Fractional Neural Network Approximation}

We need

\begin{definition}
\label{d17}Let $\nu \geq 0$, $n=\left\lceil \nu \right\rceil $ ($\left\lceil
\cdot \right\rceil $ is the ceiling of the number), $f\in AC^{n}\left( \left[
a,b\right] \right) $ (space of functions $f$ with $f^{\left( n-1\right) }\in
AC\left( \left[ a,b\right] \right) $, absolutely continuous functions). We
call left Caputo fractional derivative (see \cite{16}, pp. 49-52, \cite{18}, 
\cite{23}) the function 
\begin{equation}
D_{\ast a}^{\nu }f\left( x\right) =\frac{1}{\Gamma \left( n-\nu \right) }%
\int_{a}^{x}\left( x-t\right) ^{n-\nu -1}f^{\left( n\right) }\left( t\right)
dt,  \tag{71}  \label{r71}
\end{equation}%
$\forall $ $x\in \left[ a,b\right] $, where $\Gamma $ is the gamma function $%
\Gamma \left( \nu \right) =\int_{0}^{\infty }e^{-t}t^{\nu -1}dt$, $\nu >0$.

Notice $D_{\ast a}^{\nu }f\in L_{1}\left( \left[ a,b\right] \right) $ and $%
D_{\ast a}^{\nu }f$ exists a.e.on $\left[ a,b\right] .$

We set $D_{\ast a}^{0}f\left( x\right) =f\left( x\right) $, $\forall $ $x\in %
\left[ a,b\right] .$
\end{definition}

\begin{lemma}
\label{l18}(\cite{6}) Let $\nu >0$, $\nu \notin \mathbb{N}$, $n=\left\lceil
\nu \right\rceil $, $f\in C^{n-1}\left( \left[ a,b\right] \right) $ and $%
f^{\left( n\right) }\in L_{\infty }\left( \left[ a,b\right] \right) $. Then $%
D_{\ast a}^{\nu }f\left( a\right) =0.$
\end{lemma}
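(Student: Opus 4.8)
The plan is to estimate $D_{\ast a}^{\nu }f\left( x\right) $ directly from its defining integral (\ref{r71}) and show that it must vanish as $x$ approaches $a$. The crucial preliminary observation is a restriction on the exponent. Since $\nu \notin \mathbb{N}$ and $n=\left\lceil \nu \right\rceil $, we have $n-1<\nu <n$, whence $0<n-\nu <1$ and therefore $-1<n-\nu -1<0$. This means the factor $\left( x-t\right) ^{n-\nu -1}$ carries only an \emph{integrable} singularity at $t=x$, so the integral converges and can be bounded cleanly; this is precisely where the assumption $\nu \notin \mathbb{N}$ enters.

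First I would pass to absolute values inside the integral and exploit $f^{\left( n\right) }\in L_{\infty }\left( \left[ a,b\right] \right) $ to replace $\left\vert f^{\left( n\right) }\left( t\right) \right\vert $ by the uniform bound $\left\Vert f^{\left( n\right) }\right\Vert _{\infty }$ a.e., obtaining
\begin{equation*}
\left\vert D_{\ast a}^{\nu }f\left( x\right) \right\vert \leq \frac{\left\Vert f^{\left( n\right) }\right\Vert _{\infty }}{\Gamma \left( n-\nu \right) }\int_{a}^{x}\left( x-t\right) ^{n-\nu -1}dt.
\end{equation*}
The remaining elementary integral is evaluated by the substitution $u=x-t$, giving $\int_{0}^{x-a}u^{n-\nu -1}du=\frac{\left( x-a\right) ^{n-\nu }}{n-\nu }$, which is finite precisely because $n-\nu >0$. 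Using $\Gamma \left( n-\nu \right) \left( n-\nu \right) =\Gamma \left( n-\nu +1\right) $, this yields the explicit estimate
\begin{equation*}
\left\vert D_{\ast a}^{\nu }f\left( x\right) \right\vert \leq \frac{\left\Vert f^{\left( n\right) }\right\Vert _{\infty }}{\Gamma \left( n-\nu +1\right) }\left( x-a\right) ^{n-\nu },\qquad x\in \left[ a,b\right] .
\end{equation*}

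Finally, letting $x\rightarrow a^{+}$ and using $n-\nu >0$ so that $\left( x-a\right) ^{n-\nu }\rightarrow 0$, I conclude $D_{\ast a}^{\nu }f\left( a\right) =0$. The only genuinely delicate point in the argument is the convergence of the endpoint singularity, which is guaranteed exactly by $\nu \notin \mathbb{N}$ forcing $n-\nu \in \left( 0,1\right) $; the remaining hypotheses $f\in C^{n-1}\left( \left[ a,b\right] \right) $ and $f^{\left( n\right) }\in L_{\infty }$ serve to make the integrand a bona fide Lebesgue-integrable function to which the uniform bound on $f^{\left( n\right) }$ legitimately applies.
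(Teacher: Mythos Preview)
Your argument is correct. The paper does not actually supply a proof of this lemma; it is quoted from \cite{6}, so there is no in-paper argument to compare against. That said, the pointwise estimate you derive,
\[
\left\vert D_{\ast a}^{\nu }f\left( x\right) \right\vert \leq \frac{\left\Vert f^{\left( n\right) }\right\Vert _{\infty }}{\Gamma \left( n-\nu +1\right) }\left( x-a\right) ^{n-\nu },
\]
is exactly the bound the paper later records as (\ref{r79}) in Remark~\ref{re29}, and from it the vanishing at $x=a$ is immediate since $n-\nu>0$. Your identification of where $\nu\notin\mathbb{N}$ is used (to force $n-\nu\in(0,1)$ and hence integrability of $(x-t)^{n-\nu-1}$ near $t=x$) is accurate; had $\nu\in\mathbb{N}$ one would have $n=\nu$ and the Caputo derivative would reduce to $f^{(\nu)}$, which need not vanish at $a$.
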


\begin{definition}
\label{d19}(see also \cite{4}, \cite{17}, \cite{18}). Let $f\in AC^{m}\left( %
\left[ a,b\right] \right) $, $m=\left\lceil \alpha \right\rceil $, $\alpha
>0 $. The right Caputo fractional derivative of order $\alpha >0$ is given
by 
\begin{equation}
D_{b-}^{\alpha }f\left( x\right) =\frac{\left( -1\right) ^{m}}{\Gamma \left(
m-\alpha \right) }\int_{x}^{b}\left( \zeta -x\right) ^{m-\alpha -1}f^{\left(
m\right) }\left( \zeta \right) d\zeta ,  \tag{72}  \label{r72}
\end{equation}%
$\forall $ $x\in \left[ a,b\right] $. We set $D_{b-}^{0}f\left( x\right)
=f\left( x\right) .$ Notice $D_{b-}^{\alpha }f\in L_{1}\left( \left[ a,b%
\right] \right) $ and $D_{b-}^{\alpha }f$ exists a.e.on $\left[ a,b\right] .$
\end{definition}

\begin{lemma}
\label{l20}(\cite{6}) Let $f\in C^{m-1}\left( \left[ a,b\right] \right) $, $%
f^{\left( m\right) }\in L_{\infty }\left( \left[ a,b\right] \right) $, $%
m=\left\lceil \alpha \right\rceil $, $\alpha >0$. Then $D_{b-}^{\alpha
}f\left( b\right) =0.$
\end{lemma}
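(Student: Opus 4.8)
The plan is to mirror the proof of Lemma \ref{l18} for the left Caputo derivative, exploiting that the kernel $\left( \zeta -x\right) ^{m-\alpha -1}$ appearing in (\ref{r72}) has an integrable singularity. The integral representation (\ref{r72}) is the operative definition precisely when $m-\alpha >0$, that is when $\alpha \notin \mathbb{N}$ and $m=\left\lceil \alpha \right\rceil >\alpha $, so I work in that regime. Then the exponent satisfies $m-\alpha -1\in \left( -1,0\right) $, hence for each fixed $x\in \left[ a,b\right) $ the integral in (\ref{r72}) converges as a Lebesgue integral, while at $x=b$ the domain of integration collapses to a single point. The goal is to show directly that $D_{b-}^{\alpha }f\left( x\right) \rightarrow 0$ as $x\rightarrow b^{-}$, which simultaneously exhibits the continuity of $D_{b-}^{\alpha }f$ at $b$ and yields the claimed value $D_{b-}^{\alpha }f\left( b\right) =0$.

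First I would fix $x\in \left[ a,b\right) $ and take absolute values in (\ref{r72}). Using $f^{\left( m\right) }\in L_{\infty }\left( \left[ a,b\right] \right) $ I bound $\left| f^{\left( m\right) }\left( \zeta \right) \right| \leq \left\| f^{\left( m\right) }\right\| _{\infty }$ almost everywhere and pull the constant outside the integral, obtaining
\[
\left| D_{b-}^{\alpha }f\left( x\right) \right| \leq \frac{\left\| f^{\left( m\right) }\right\| _{\infty }}{\Gamma \left( m-\alpha \right) }\int_{x}^{b}\left( \zeta -x\right) ^{m-\alpha -1}d\zeta .
\]
Next I would evaluate the elementary integral: substituting $u=\zeta -x$ and using $m-\alpha >0$ gives $\int_{x}^{b}\left( \zeta -x\right) ^{m-\alpha -1}d\zeta =\left( b-x\right) ^{m-\alpha }/\left( m-\alpha \right) $. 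Combining this with the functional equation $\Gamma \left( m-\alpha +1\right) =\left( m-\alpha \right) \Gamma \left( m-\alpha \right) $ yields
\[
\left| D_{b-}^{\alpha }f\left( x\right) \right| \leq \frac{\left\| f^{\left( m\right) }\right\| _{\infty }}{\Gamma \left( m-\alpha +1\right) }\left( b-x\right) ^{m-\alpha }.
\]

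Finally, since $m-\alpha >0$, the right-hand side tends to $0$ as $x\rightarrow b^{-}$, proving $D_{b-}^{\alpha }f\left( b\right) =0$. The only point requiring care is the integrable singularity of the kernel at $\zeta =x$: because $m-\alpha -1>-1$ the antiderivative $u^{m-\alpha }/\left( m-\alpha \right) $ contributes nothing at the lower endpoint, so the estimate is legitimate rather than a formal manipulation. I expect no serious obstacle beyond this routine bookkeeping; the argument is the exact analogue, reflected via $x\mapsto a+b-x$, of the bound $\left| D_{\ast a}^{\nu }f\left( x\right) \right| \leq \left\| f^{\left( n\right) }\right\| _{\infty }\left( x-a\right) ^{n-\nu }/\Gamma \left( n-\nu +1\right) $ underlying Lemma \ref{l18}.
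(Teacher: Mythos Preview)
The paper does not supply its own proof of this lemma; it simply cites \cite{6}. Your argument is correct and is precisely the standard one: the bound you derive, $\left| D_{b-}^{\alpha }f\left( x\right) \right| \leq \dfrac{\left\| f^{\left( m\right) }\right\| _{\infty }}{\Gamma \left( m-\alpha +1\right) }\left( b-x\right) ^{m-\alpha }$, is the mirror image of the left-sided estimate (\ref{r79}) that the paper itself records in Remark \ref{re29}, so your approach is entirely in line with the methods used elsewhere in the paper.
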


\begin{convention}
\label{c21}We assume that 
\begin{equation}
D_{\ast x_{0}}^{\alpha }f\left( x\right) =0\text{, \ for }x<x_{0},  \tag{73}
\label{r73}
\end{equation}%
and 
\begin{equation}
D_{x_{0}-}^{\alpha }f\left( x\right) =0\text{, for }x>x_{0},  \tag{74}
\label{r74}
\end{equation}%
for all $x,x_{0}\in (a,b].$
\end{convention}

We mention

\begin{proposition}
\label{p22}(\cite{6}) Let $f\in C^{n}\left( \left[ a,b\right] \right) $, $%
n=\left\lceil \nu \right\rceil $, $\nu >0$. Then $D_{\ast a}^{\nu }f\left(
x\right) $ is continuous in $x\in \left[ a,b\right] .$
\end{proposition}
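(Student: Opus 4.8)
Proposition \ref{p22} asserts that if $f\in C^{n}\left( \left[ a,b\right] \right)$ with $n=\left\lceil \nu \right\rceil$ and $\nu >0$, then the left Caputo fractional derivative $D_{\ast a}^{\nu }f\left( x\right)$ is continuous in $x\in \left[ a,b\right]$.

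**Plan of attack.** The plan is to work directly from the integral definition (\ref{r71}) and reduce continuity of $D_{\ast a}^{\nu }f$ to continuity of an integral depending on $x$ through both its upper limit and its kernel. I first separate into the trivial case $\nu \in \mathbb{N}$ and the genuine fractional case. When $\nu =n\in \mathbb{N}$ the factor $\frac{1}{\Gamma(n-\nu)}=\frac{1}{\Gamma(0)}$ formalism collapses and the Caputo derivative is simply $f^{\left( n\right) }$, which is continuous by hypothesis $f\in C^{n}$; so I dispose of that case immediately. The interesting case is $0<n-\nu <1$, where the kernel $\left( x-t\right) ^{n-\nu -1}$ has an integrable singularity at $t=x$.

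**Key steps in order.** First I would fix $x_{0}\in \left[ a,b\right]$ and a nearby point $x$, and write
\begin{equation*}
\Gamma \left( n-\nu \right) \left( D_{\ast a}^{\nu }f\left( x\right) -D_{\ast a}^{\nu }f\left( x_{0}\right) \right) =\int_{a}^{x}\left( x-t\right) ^{n-\nu -1}f^{\left( n\right) }\left( t\right) dt-\int_{a}^{x_{0}}\left( x_{0}-t\right) ^{n-\nu -1}f^{\left( n\right) }\left( t\right) dt.
\end{equation*}
The natural normalization is to substitute $t=x-s\left( x-a\right)$ (or $t=a+s(x-a)$) so that each integral is transported to the fixed interval $s\in\left[0,1\right]$; this turns the moving upper limit into a fixed one and exposes the $x$-dependence cleanly in the integrand. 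Second, using $f^{\left( n\right) }\in C\left( \left[ a,b\right] \right)$, hence bounded, say $\left\Vert f^{\left( n\right) }\right\Vert _{\infty }=:M$, I would bound the difference of the two transported integrands pointwise in $s$. The dominating function is an integrable power of $s$ (the singularity $s^{n-\nu -1}$ is integrable since $n-\nu >0$), which is independent of $x$ near $x_{0}$. Third, I would invoke the Dominated Convergence Theorem (or a direct uniform-continuity estimate) to pass $x\to x_{0}$ inside the integral, concluding that the difference tends to $0$, which is exactly continuity at $x_{0}$. Since $x_{0}$ was arbitrary, continuity on $\left[ a,b\right]$ follows.

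**The main obstacle.** The delicate point is the singularity of the kernel at the upper endpoint $t=x$: a naive estimate of $\left| \left( x-t\right) ^{n-\nu -1}-\left( x_{0}-t\right) ^{n-\nu -1}\right|$ blows up near the endpoints and cannot be controlled termwise on a fixed $t$-interval. The change of variables to $\left[0,1\right]$ is what resolves this, because after rescaling the singularity sits at the \emph{fixed} location $s=1$ (or $s=0$) with an integrable dominating power $s^{n-\nu -1}$ that no longer depends on $x$; the remaining $x$-dependence in the integrand is through a factor like $\left( x-a\right) ^{n-\nu }$ and through $f^{\left( n\right) }$ evaluated at points moving continuously with $x$, both of which are continuous in $x$. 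Thus the entire difficulty is channeled into justifying the limit-integral interchange, for which the integrable majorant supplied by the rescaling is precisely the needed hypothesis for dominated convergence. The continuity of $f^{\left( n\right) }$ guarantees the integrand converges pointwise as $x\to x_{0}$, completing the argument.
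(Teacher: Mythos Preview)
The paper does not supply its own proof of this proposition; it is quoted from reference \cite{6} and stated without argument. Your approach is correct and is in fact the standard one: rewrite $D_{\ast a}^{\nu }f(x)$ via the substitution $t=a+s(x-a)$ as
\[
D_{\ast a}^{\nu }f(x)=\frac{(x-a)^{n-\nu}}{\Gamma(n-\nu)}\int_{0}^{1}(1-s)^{n-\nu-1}f^{(n)}\bigl(a+s(x-a)\bigr)\,ds,
\]
so that the singular kernel $(1-s)^{n-\nu-1}$ is independent of $x$ and integrable on $[0,1]$ (since $n-\nu>0$), while the remaining $x$-dependence enters only through the continuous prefactor $(x-a)^{n-\nu}$ and through $f^{(n)}$ evaluated at a point varying continuously with $x$. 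Dominated convergence (with majorant $\Vert f^{(n)}\Vert_{\infty}(1-s)^{n-\nu-1}$) then yields continuity at every $x_{0}\in[a,b]$, including the endpoint $x_{0}=a$ where the prefactor vanishes. One small slip in your write-up: with the substitution $t=a+s(x-a)$ the dominating power is $(1-s)^{n-\nu-1}$, not $s^{n-\nu-1}$; the latter arises from your alternative substitution $t=x-s(x-a)$. Either works, but keep the notation consistent when you write it out.
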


Also we have

\begin{proposition}
\label{p23}(\cite{6}) Let $f\in C^{m}\left( \left[ a,b\right] \right) $, $%
m=\left\lceil \alpha \right\rceil $, $\alpha >0$. Then $D_{b-}^{\alpha
}f\left( x\right) $ is continuous in $x\in \left[ a,b\right] .$
\end{proposition}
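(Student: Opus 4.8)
The plan is to reduce everything to a single integral with a fixed singularity and then split the difference $D_{b-}^{\alpha}f(x)-D_{b-}^{\alpha}f(x_{0})$ into an ``integrand'' part and a ``boundary'' part. First I would dispose of the trivial case $\alpha\in\mathbb{N}$: then $m=\alpha$ and the definition reduces to $D_{b-}^{\alpha}f=(-1)^{m}f^{(m)}$, which is continuous because $f\in C^{m}([a,b])$. So assume $\alpha\notin\mathbb{N}$, i.e. $m-1<\alpha<m$, hence $0<m-\alpha<1$ and the exponent $\beta:=m-\alpha-1$ lies in $(-1,0)$.

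Next I would perform the substitution $u=\zeta-x$ in (\ref{r72}) to write
\[
D_{b-}^{\alpha}f(x)=\frac{(-1)^{m}}{\Gamma(m-\alpha)}\int_{0}^{b-x}u^{\beta}\,f^{(m)}(u+x)\,du.
\]
The point of this change of variable is that it pins the (integrable) singularity of the kernel at the fixed location $u=0$, independent of $x$; all of the $x$-dependence now sits in the upper limit $b-x$ and in the shift $u+x$ of the argument of $f^{(m)}$. Writing $g:=f^{(m)}\in C([a,b])$ (so $g$ is bounded by $\Vert g\Vert_{\infty}$ and uniformly continuous) and $H(x):=\int_{0}^{b-x}u^{\beta}g(u+x)\,du$, it suffices to show $H$ is continuous on $[a,b]$.

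To establish continuity at $x_{0}\in[a,b]$, I would take $x\in[a,b]$ with, say, $x>x_{0}$ (the case $x<x_{0}$ being symmetric) and split
\[
H(x)-H(x_{0})=\int_{0}^{b-x}u^{\beta}\bigl(g(u+x)-g(u+x_{0})\bigr)\,du-\int_{b-x}^{b-x_{0}}u^{\beta}g(u+x_{0})\,du.
\]
The first integral is bounded in absolute value by $\omega_{1}(g,|x-x_{0}|)\int_{0}^{b-a}u^{\beta}\,du=\omega_{1}(g,|x-x_{0}|)\,\frac{(b-a)^{m-\alpha}}{m-\alpha}$, using $\beta+1=m-\alpha>0$ so that $\int_{0}^{c}u^{\beta}\,du=c^{m-\alpha}/(m-\alpha)$ converges. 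The second is bounded by $\Vert g\Vert_{\infty}\,\frac{(b-x_{0})^{m-\alpha}-(b-x)^{m-\alpha}}{m-\alpha}$. As $x\to x_{0}$ the first bound tends to $0$ because $g$ is uniformly continuous (hence $\omega_{1}(g,\delta)\to0$), and the second tends to $0$ because $t\mapsto t^{m-\alpha}$ is continuous (as $m-\alpha>0$). Thus $H(x)\to H(x_{0})$, proving continuity; at the endpoint $x_{0}=b$ one has $H(b)=0$ and $H(x)\to0$ as $x\uparrow b$, consistent with Lemma \ref{l20}.

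The only real obstacle is the moving, borderline-integrable singularity $(\zeta-x)^{m-\alpha-1}$ at $\zeta=x$; the substitution above is exactly the device that neutralizes it by fixing the singular point, after which the two remaining sources of $x$-dependence separate cleanly. An equally valid alternative would be to keep the shifted form and invoke dominated convergence along an arbitrary sequence $x_{n}\to x_{0}$, dominating the integrand by the integrable function $\Vert g\Vert_{\infty}\,u^{\beta}\mathbf{1}_{[0,b-a]}(u)$ and using that $u^{\beta}g(u+x_{n})\mathbf{1}_{[0,b-x_{n}]}(u)$ converges pointwise almost everywhere (the indicator's jump occurring on a null set).
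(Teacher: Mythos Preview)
Your argument is correct. The paper does not supply its own proof of this proposition; it simply quotes the result from \cite{6}. Consequently there is no in-paper proof to compare against, but your approach---freezing the singularity at $u=0$ via the substitution $u=\zeta-x$ and then splitting $H(x)-H(x_{0})$ into a uniform-continuity piece controlled by $\omega_{1}(f^{(m)},|x-x_{0}|)$ and a boundary piece controlled by the continuity of $t\mapsto t^{m-\alpha}$---is a clean, standard, and fully rigorous way to establish the claim. The separate handling of the integer case $\alpha\in\mathbb{N}$ is appropriate, since the integral formula (\ref{r72}) degenerates there and one falls back on the convention $D_{b-}^{m}f=(-1)^{m}f^{(m)}$. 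The dominated-convergence alternative you sketch at the end would work equally well.
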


We further mention

\begin{proposition}
\label{p24}(\cite{6}) Let $f\in C^{m-1}\left( \left[ a,b\right] \right) $, $%
f^{\left( m\right) }\in L_{\infty }\left( \left[ a,b\right] \right) $, $%
m=\left\lceil \alpha \right\rceil $, $\alpha >0$ and 
\begin{equation}
D_{\ast x_{0}}^{\alpha }f\left( x\right) =\frac{1}{\Gamma \left( m-\alpha
\right) }\int_{x_{0}}^{x}\left( x-t\right) ^{m-\alpha -1}f^{\left( m\right)
}\left( t\right) dt,  \tag{75}  \label{r75}
\end{equation}%
for all $x,x_{0}\in \left[ a,b\right] :x\geq x_{0}.$

Then $D_{\ast x_{0}}^{\alpha }f\left( x\right) $ is continuous in $x_{0}.$
\end{proposition}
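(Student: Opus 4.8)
The plan is to fix $x\in[a,b]$ and show that the map $x_{0}\mapsto D_{\ast x_{0}}^{\alpha}f(x)$ is continuous on $[a,x]$. First I would record the structural fact that drives the argument. Since $m=\left\lceil \alpha\right\rceil$, in the substantive (non-integer) case we have $0<m-\alpha<1$, so the weight $(x-t)^{m-\alpha-1}$ appearing in (\ref{r75}) has exponent $m-\alpha-1\in(-1,0)$; it is therefore integrable on $[x_{0},x]$, with its only singularity located at the right endpoint $t=x$. Because $f^{(m)}\in L_{\infty}([a,b])$, the integrand is dominated a.e. by $\left\Vert f^{(m)}\right\Vert_{\infty}(x-t)^{m-\alpha-1}\in L_{1}([x_{0},x])$, so $D_{\ast x_{0}}^{\alpha}f(x)$ is finite for every $x_{0}\le x$ and the machinery of the Lebesgue integral applies.

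Next I would estimate the increment directly. Take $x_{0},x_{0}'\in[a,x]$ with, say, $x_{0}\le x_{0}'$. By additivity of the integral over $[x_{0},x]=[x_{0},x_{0}']\cup[x_{0}',x]$, the two fractional derivatives differ only through the slice $[x_{0},x_{0}']$:
\begin{equation*}
D_{\ast x_{0}}^{\alpha}f(x)-D_{\ast x_{0}'}^{\alpha}f(x)=\frac{1}{\Gamma(m-\alpha)}\int_{x_{0}}^{x_{0}'}(x-t)^{m-\alpha-1}f^{(m)}(t)\,dt.
\end{equation*}
Passing to absolute values and bounding $\left\vert f^{(m)}\right\vert$ by $\left\Vert f^{(m)}\right\Vert_{\infty}$ almost everywhere gives
\begin{equation*}
\left\vert D_{\ast x_{0}}^{\alpha}f(x)-D_{\ast x_{0}'}^{\alpha}f(x)\right\vert \le \frac{\left\Vert f^{(m)}\right\Vert_{\infty}}{\Gamma(m-\alpha)}\int_{x_{0}}^{x_{0}'}(x-t)^{m-\alpha-1}\,dt.
\end{equation*}

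Finally I would evaluate the elementary integral. Using $\Gamma(m-\alpha+1)=(m-\alpha)\Gamma(m-\alpha)$ one obtains
\begin{equation*}
\left\vert D_{\ast x_{0}}^{\alpha}f(x)-D_{\ast x_{0}'}^{\alpha}f(x)\right\vert \le \frac{\left\Vert f^{(m)}\right\Vert_{\infty}}{\Gamma(m-\alpha+1)}\left[(x-x_{0})^{m-\alpha}-(x-x_{0}')^{m-\alpha}\right].
\end{equation*}
Since $m-\alpha>0$, the function $u\mapsto(x-u)^{m-\alpha}$ is uniformly continuous on $[a,x]$, including at the endpoint $u=x$ where it vanishes; hence the right-hand side tends to $0$ as $x_{0}'\to x_{0}$, which yields continuity of $x_{0}\mapsto D_{\ast x_{0}}^{\alpha}f(x)$ on $[a,x]$ (in fact uniform continuity, with modulus controlled by that of $(x-\cdot)^{m-\alpha}$).

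I expect the only genuine obstacle to be conceptual rather than computational: because $f^{(m)}$ is merely essentially bounded and need not be continuous, one cannot let $x_{0}$ vary inside the argument of $f^{(m)}$ and appeal to pointwise convergence (a normalizing substitution such as $t=x_{0}+s(x-x_{0})$ would run into exactly this difficulty). The device that sidesteps it is to keep $f^{(m)}$ under the integral sign untouched, localize the entire difference to the thin interval $[x_{0},x_{0}']$, and absorb $f^{(m)}$ into its $L_{\infty}$ bound, so that all the $x_{0}$-dependence is carried by the explicitly integrable singular weight. The one point needing care is to confirm that the bound still vanishes in the limiting regime $x_{0}'\to x$, which is guaranteed precisely by $m-\alpha>0$.
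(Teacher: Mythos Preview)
Your argument is correct: localizing the difference to the slice $[x_{0},x_{0}']$, absorbing $f^{(m)}$ into its $L_{\infty}$ bound, and integrating the remaining power weight yields the explicit modulus
\[
\left\vert D_{\ast x_{0}}^{\alpha}f(x)-D_{\ast x_{0}'}^{\alpha}f(x)\right\vert \le \frac{\left\Vert f^{(m)}\right\Vert_{\infty}}{\Gamma(m-\alpha+1)}\left\vert (x-x_{0})^{m-\alpha}-(x-x_{0}')^{m-\alpha}\right\vert,
\]
from which continuity in $x_{0}$ on $[a,x]$ follows by continuity of $u\mapsto (x-u)^{m-\alpha}$ for $m-\alpha>0$. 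Your remark that the integer case $\alpha\in\mathbb{N}$ is degenerate (formula~(\ref{r75}) is not the operative definition there) is also in order.

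There is nothing to compare against in this paper: Proposition~\ref{p24} is not proved here but quoted from \cite{6}, so the paper supplies no argument of its own. Your direct estimate is exactly the natural proof and matches in spirit the pointwise bounds the paper does work out elsewhere (e.g.\ (\ref{r128})--(\ref{r129}), which are the special case $x_{0}'=x$ of your inequality).
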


\begin{proposition}
\label{p25}(\cite{6}) Let $f\in C^{m-1}\left( \left[ a,b\right] \right) $, $%
f^{\left( m\right) }\in L_{\infty }\left( \left[ a,b\right] \right) $, $%
m=\left\lceil \alpha \right\rceil $, $\alpha >0$ and 
\begin{equation}
D_{x_{0}-}^{\alpha }f\left( x\right) =\frac{\left( -1\right) ^{m}}{\Gamma
\left( m-\alpha \right) }\int_{x}^{x_{0}}\left( \zeta -x\right) ^{m-\alpha
-1}f^{\left( m\right) }\left( \zeta \right) d\zeta ,  \tag{76}  \label{r76}
\end{equation}%
for all $x,x_{0}\in \left[ a,b\right] :x\leq x_{0}.$

Then $D_{x_{0}-}^{\alpha }f\left( x\right) $ is continuous in $x_{0}.$
\end{proposition}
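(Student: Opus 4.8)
The plan is to mirror the argument for the left Caputo derivative in Proposition \ref{p24}, now exploiting that $x_{0}$ appears as the \emph{upper} limit of integration. Fix $x\in \left[ a,b\right] $ and take $x_{0},x_{0}^{\ast }\in \left[ a,b\right] $ with $x\leq x_{0}\leq x_{0}^{\ast }$; the reverse ordering is symmetric. Because $m=\left\lceil \alpha \right\rceil $, for $\alpha \notin \mathbb{N}$ the exponent satisfies $m-\alpha -1\in \left( -1,0\right) $, so the factor $\left( \zeta -x\right) ^{m-\alpha -1}$ has only an integrable singularity at $\zeta =x$ and each of the two defining integrals converges. I may therefore subtract them to obtain
\begin{equation*}
D_{x_{0}^{\ast }-}^{\alpha }f\left( x\right) -D_{x_{0}-}^{\alpha }f\left( x\right) =\frac{\left( -1\right) ^{m}}{\Gamma \left( m-\alpha \right) }\int_{x_{0}}^{x_{0}^{\ast }}\left( \zeta -x\right) ^{m-\alpha -1}f^{\left( m\right) }\left( \zeta \right) d\zeta .
\end{equation*}

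Next I would estimate this difference using $f^{\left( m\right) }\in L_{\infty }\left( \left[ a,b\right] \right) $. Bounding $\left\vert f^{\left( m\right) }\right\vert $ by $\left\Vert f^{\left( m\right) }\right\Vert _{\infty }$ and integrating the remaining power explicitly gives
\begin{equation*}
\left\vert D_{x_{0}^{\ast }-}^{\alpha }f\left( x\right) -D_{x_{0}-}^{\alpha }f\left( x\right) \right\vert \leq \frac{\left\Vert f^{\left( m\right) }\right\Vert _{\infty }}{\Gamma \left( m-\alpha +1\right) }\left[ \left( x_{0}^{\ast }-x\right) ^{m-\alpha }-\left( x_{0}-x\right) ^{m-\alpha }\right] ,
\end{equation*}
where I used $\left( m-\alpha \right) \Gamma \left( m-\alpha \right) =\Gamma \left( m-\alpha +1\right) $. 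Since $m-\alpha >0$, the map $t\mapsto t^{m-\alpha }$ is continuous on $\left[ 0,b-a\right] $, so the bracket tends to $0$ as $x_{0}^{\ast }\rightarrow x_{0}$; this yields continuity of $x_{0}\mapsto D_{x_{0}-}^{\alpha }f\left( x\right) $.

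The main point to get right is the bookkeeping of the integrable singularity at $\zeta =x$: each fractional derivative individually carries the factor $\left( \zeta -x\right) ^{m-\alpha -1}$, which blows up as $\zeta \rightarrow x^{+}$, yet its antiderivative $\left( \zeta -x\right) ^{m-\alpha }/\left( m-\alpha \right) $ stays continuous up to $\zeta =x$ precisely because $m-\alpha >0$. This is what makes both the clean difference formula and the explicit bound legitimate even in the delicate boundary case $x_{0}=x$, where $D_{x-}^{\alpha }f\left( x\right) =0$ and one checks directly from the bound that $D_{x_{0}^{\ast }-}^{\alpha }f\left( x\right) \rightarrow 0$. (For integer $\alpha $ the operator reduces to an ordinary derivative and continuity is immediate.) I do not expect any genuine obstacle beyond this singularity handling; indeed the estimate is uniform in $x$, so the same computation delivers joint continuity of $\left( x,x_{0}\right) \mapsto D_{x_{0}-}^{\alpha }f\left( x\right) $ as a bonus.
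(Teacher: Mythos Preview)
The paper does not supply its own proof of this proposition: it is quoted verbatim from \cite{6}, as the parenthetical citation in the statement indicates, and no argument appears in the present text. There is therefore nothing in the paper to compare your proposal against.

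That said, your argument is correct and is the natural direct proof. Writing the difference $D_{x_{0}^{\ast }-}^{\alpha }f(x)-D_{x_{0}-}^{\alpha }f(x)$ as a single integral over $[x_{0},x_{0}^{\ast }]$ and bounding by $\Vert f^{(m)}\Vert _{\infty }$ gives exactly the estimate you wrote; since $m-\alpha >0$ the map $t\mapsto t^{m-\alpha }$ is continuous, and the bracket vanishes as $x_{0}^{\ast }\rightarrow x_{0}$. Your handling of the integrable singularity at $\zeta =x$ and of the boundary case $x_{0}=x$ is also accurate. One small caution on your final aside: uniformity of the bound in $x$ (which follows, for $0<m-\alpha <1$, from subadditivity of $t\mapsto t^{m-\alpha }$) gives equicontinuity in $x_{0}$, but joint continuity on the triangle $\{x\leq x_{0}\}$ also needs continuity in $x$ for fixed $x_{0}$, which is a separate (easy) check; the paper records that stronger statement as Proposition~\ref{p26}.
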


\begin{proposition}
\label{p26}(\cite{6}) Let $f\in C^{m}\left( \left[ a,b\right] \right) $, $%
m=\left\lceil \alpha \right\rceil $, $\alpha >0$, $x,x_{0}\in \left[ a,b%
\right] $. Then $D_{\ast x_{0}}^{\alpha }f\left( x\right) $, $%
D_{x_{0}-}^{\alpha }f\left( x\right) $ are jointly continuous functions in $%
\left( x,x_{0}\right) $ from $\left[ a,b\right] ^{2}$ into $\mathbb{R}$.
\end{proposition}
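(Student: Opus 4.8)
The plan is to treat the two derivatives symmetrically, reducing each to a single scalar integral with an integrable power singularity, and then to extract joint continuity from the uniform continuity of $f^{(m)}$ together with the continuity of $v\mapsto v^{m-\alpha}$. First I would dispose of the integer case: if $\alpha\in\mathbb{N}$ then $m=\alpha$ and each of $D_{\ast x_{0}}^{\alpha}f$, $D_{x_{0}-}^{\alpha}f$ reduces (up to a sign) to the ordinary derivative $f^{(m)}$, which is continuous on $\left[ a,b\right] $ and independent of $x_{0}$, so joint continuity is immediate. Henceforth assume $\alpha\notin\mathbb{N}$, set $\beta:=m-\alpha\in(0,1)$ and $g:=f^{(m)}\in C\left( \left[ a,b\right] \right) $, and write $M:=\left\Vert g\right\Vert_{\infty}$. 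Substituting $u=x-t$ in $(\ref{r75})$ gives, for $x\geq x_{0}$,
\[
D_{\ast x_{0}}^{\alpha}f\left( x\right) =\frac{1}{\Gamma\left( \beta\right) }\int_{0}^{x-x_{0}}u^{\beta-1}g\left( x-u\right) du,
\]
while by convention $(\ref{r73})$ the value is $0$ for $x<x_{0}$. The kernel $u^{\beta-1}$ is integrable near $0$ precisely because $\beta>0$, and this is the feature that makes everything work.

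Next I would prove continuity at a fixed $\left( x^{\ast},x_{0}^{\ast}\right) \in\left[ a,b\right] ^{2}$ by cases. If $x^{\ast}<x_{0}^{\ast}$, then by $(\ref{r73})$ the function vanishes identically on a whole neighborhood, so continuity is trivial. If $x^{\ast}>x_{0}^{\ast}$, then for $\left( x,x_{0}\right) $ close enough we still have $x>x_{0}$ and may use the integral formula; writing
\[
D_{\ast x_{0}}^{\alpha}f\left( x\right) -D_{\ast x_{0}^{\ast}}^{\alpha}f\left( x^{\ast}\right) =\frac{1}{\Gamma\left( \beta\right) }\int_{0}^{x-x_{0}}u^{\beta-1}\bigl( g\left( x-u\right) -g\left( x^{\ast}-u\right) \bigr) du+\frac{1}{\Gamma\left( \beta\right) }\left( \int_{0}^{x-x_{0}}-\int_{0}^{x^{\ast}-x_{0}^{\ast}}\right) u^{\beta-1}g\left( x^{\ast}-u\right) du,
\]
the first integral is bounded by $\frac{1}{\Gamma\left( \beta\right) }\omega_{1}\left( g,\left\vert x-x^{\ast}\right\vert \right) \frac{\left( x-x_{0}\right) ^{\beta}}{\beta}$ and the second by $\frac{M}{\beta\Gamma\left( \beta\right) }\bigl\vert \left( x-x_{0}\right) ^{\beta}-\left( x^{\ast}-x_{0}^{\ast}\right) ^{\beta}\bigr\vert $; both tend to $0$ as $\left( x,x_{0}\right) \to\left( x^{\ast},x_{0}^{\ast}\right) $, using $\omega_{1}\left( g,\delta\right) \to0$ and the continuity of $v\mapsto v^{\beta}$.

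The delicate points are those on the diagonal $x^{\ast}=x_{0}^{\ast}$, where the singular-integral branch meets the branch fixed to $0$, and I expect this to be the main obstacle. There $D_{\ast x_{0}^{\ast}}^{\alpha}f\left( x^{\ast}\right) =0$, and for $\left( x,x_{0}\right) \to\left( x^{\ast},x^{\ast}\right) $ the value is $0$ when $x<x_{0}$, while when $x\geq x_{0}$,
\[
\left\vert D_{\ast x_{0}}^{\alpha}f\left( x\right) \right\vert \leq\frac{M}{\Gamma\left( \beta\right) }\int_{0}^{x-x_{0}}u^{\beta-1}du=\frac{M}{\Gamma\left( \beta\right) }\frac{\left( x-x_{0}\right) ^{\beta}}{\beta}\to0,
\]
since $x-x_{0}\to0$ and $\beta>0$. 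This estimate is exactly where integrability of the kernel (equivalently $m-\alpha>0$) is indispensable, and it closes the diagonal case, giving joint continuity of $D_{\ast x_{0}}^{\alpha}f$ on all of $\left[ a,b\right] ^{2}$. Finally, the right derivative is handled by the mirror-image computation: substituting $u=\zeta-x$ in $(\ref{r76})$ yields, for $x\leq x_{0}$,
\[
D_{x_{0}-}^{\alpha}f\left( x\right) =\frac{\left( -1\right) ^{m}}{\Gamma\left( \beta\right) }\int_{0}^{x_{0}-x}u^{\beta-1}g\left( x+u\right) du,
\]
with value $0$ for $x>x_{0}$ by $(\ref{r74})$; the same three-case analysis, now reflected about the diagonal, gives its joint continuity, and combining the two establishes the proposition.
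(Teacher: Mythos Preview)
Your argument is essentially correct, and in fact the paper does not prove this proposition at all: it is merely quoted from \cite{6}, so there is no in-paper proof to compare against. Your approach---normalizing the singular kernel by the substitution $u=x-t$ (resp.\ $u=\zeta-x$), then splitting the difference into a ``same-limit, perturbed integrand'' piece controlled by $\omega_{1}(g,\cdot)$ and a ``same integrand, perturbed limit'' piece controlled by $v\mapsto v^{\beta}$, with a separate diagonal estimate---is exactly the natural direct proof, and the diagonal case is handled cleanly by the bound $\frac{M}{\Gamma(\beta+1)}(x-x_{0})^{\beta}$.

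One small technical point worth tightening: in your off-diagonal decomposition
\[
\int_{0}^{x-x_{0}}u^{\beta-1}\bigl(g(x-u)-g(x^{\ast}-u)\bigr)\,du
\]
you evaluate $g(x^{\ast}-u)$ for $u\in[0,x-x_{0}]$, and $x^{\ast}-(x-x_{0})$ can dip slightly below $a$ when $x_{0}^{\ast}=a$ and $x>x^{\ast}$. The fix is immediate---either extend $g=f^{(m)}$ continuously past the endpoints before running the estimate, or use the alternative split
\[
\int_{0}^{\min(x-x_{0},\,x^{\ast}-x_{0}^{\ast})}u^{\beta-1}\bigl(g(x-u)-g(x^{\ast}-u)\bigr)\,du+\int_{\min}^{\max}u^{\beta-1}g(\cdot)\,du,
\]
which keeps all arguments inside $[a,b]$. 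With that adjustment the proof is complete.
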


We recall

\begin{theorem}
\label{t27}(\cite{6}) Let $f:\left[ a,b\right] ^{2}\rightarrow \mathbb{R}$
be jointly continuous. Consider 
\begin{equation}
G\left( x\right) =\omega _{1}\left( f\left( \cdot ,x\right) ,\delta ,\left[
x,b\right] \right) ,  \tag{77}  \label{r77}
\end{equation}%
$\delta >0$, $x\in \left[ a,b\right] .$

Then $G$ is continuous in $x\in \left[ a,b\right] .$
\end{theorem}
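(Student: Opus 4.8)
The plan is to exploit the uniform continuity of $f$, which holds because $f$ is continuous on the compact square $[a,b]^2$. Fix $\epsilon>0$; then there is $\rho>0$ such that $\left|f(s,u)-f(t,v)\right|<\epsilon$ whenever $(s,u),(t,v)\in[a,b]^2$ with $|s-t|\le\rho$ and $|u-v|\le\rho$. I would in fact prove the stronger statement that $G$ is uniformly continuous, by bounding $|G(x)-G(y)|$ for arbitrary $x,y$ with $|x-y|<\rho$. The difficulty is that as $x$ varies, two things change at once: the second argument of $f$ and the interval $[x,b]$ over which the supremum defining the modulus is taken. I would separate these two effects. Assuming without loss of generality $x\le y$ (so $[y,b]\subseteq[x,b]$), I insert the intermediate quantity $\omega_1(f(\cdot,y),\delta,[x,b])$ and write
$$|G(x)-G(y)|\le\bigl|\omega_1(f(\cdot,x),\delta,[x,b])-\omega_1(f(\cdot,y),\delta,[x,b])\bigr|+\bigl|\omega_1(f(\cdot,y),\delta,[x,b])-\omega_1(f(\cdot,y),\delta,[y,b])\bigr|.$$

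For the first difference the interval is fixed and only the function changes, so I would use the elementary inequality $|\omega_1(g,\delta)-\omega_1(h,\delta)|\le 2\|g-h\|_\infty$, which follows at once from $|g(s)-g(t)|\le|h(s)-h(t)|+2\|g-h\|_\infty$ upon taking suprema over pairs with $|s-t|\le\delta$. Here $g=f(\cdot,x)$ and $h=f(\cdot,y)$ on $[x,b]$, and $\|g-h\|_{\infty,[x,b]}=\sup_{t\in[x,b]}\left|f(t,x)-f(t,y)\right|\le\epsilon$ by the choice of $\rho$. Thus the first difference is at most $2\epsilon$.

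The main obstacle is the second difference, where the same function $f(\cdot,y)$ is compared over the two nested intervals $[x,b]\supseteq[y,b]$. Monotonicity in the domain gives $\omega_1(f(\cdot,y),\delta,[y,b])\le\omega_1(f(\cdot,y),\delta,[x,b])$ for free; the task is to establish the reverse bound up to order $\epsilon$. The key device is the truncation map $s\mapsto s':=\max(s,y)$, which is $1$-Lipschitz and carries $[x,b]$ onto $[y,b]$ while shrinking distances, so that any admissible pair $s,t\in[x,b]$ with $|s-t|\le\delta$ maps to a pair $s',t'\in[y,b]$ with $|s'-t'|\le\delta$. Since $|s-s'|,|t-t'|\le y-x\le\rho$ while the second argument stays fixed at $y$, uniform continuity gives $\left|f(s,y)-f(s',y)\right|\le\epsilon$ and $\left|f(t,y)-f(t',y)\right|\le\epsilon$, whence
$$\left|f(s,y)-f(t,y)\right|\le\left|f(s',y)-f(t',y)\right|+2\epsilon\le\omega_1(f(\cdot,y),\delta,[y,b])+2\epsilon.$$

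Taking the supremum over all admissible pairs $s,t\in[x,b]$ bounds the second difference by $2\epsilon$ as well. Combining the two estimates, $|G(x)-G(y)|\le 4\epsilon$ whenever $|x-y|<\rho$; the case $y\le x$ is handled symmetrically by interchanging the roles of the two intervals. Since $\epsilon>0$ was arbitrary and $\rho$ depends only on $\epsilon$ (not on $x$ or $y$), this shows $G$ is uniformly continuous, and in particular continuous, on $[a,b]$.
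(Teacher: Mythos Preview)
Your proof is correct. The paper itself does not supply a proof of this theorem; it merely quotes the result from reference \cite{6}, so there is no argument in the paper to compare against. Your approach---exploiting uniform continuity of $f$ on the compact square, splitting $|G(x)-G(y)|$ via the intermediate quantity $\omega_1(f(\cdot,y),\delta,[x,b])$, and handling the interval shrinkage by the $1$-Lipschitz truncation $s\mapsto\max(s,y)$---is clean and entirely self-contained, and in fact yields the slightly stronger conclusion of uniform continuity of $G$.
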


Also it holds

\begin{theorem}
\label{t28}(\cite{6}) Let $f:\left[ a,b\right] ^{2}\rightarrow \mathbb{R}$
be jointly continuous. Then 
\begin{equation}
H\left( x\right) =\omega _{1}\left( f\left( \cdot ,x\right) ,\delta ,\left[
a,x\right] \right) ,  \tag{78}  \label{r78}
\end{equation}%
$x\in \left[ a,b\right] $, is continuous in $x\in \left[ a,b\right] $, $%
\delta >0.$
\end{theorem}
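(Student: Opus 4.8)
The plan is to mirror the argument that establishes Theorem \ref{t27}, the only structural change being that the relevant interval is now $[a,x]$ rather than $[x,b]$, so that it \emph{grows} (rather than shrinks) as $x$ increases. Fix $x_{0}\in\left[a,b\right]$ and $\varepsilon>0$. The crucial input is that $f$, being continuous on the compact square $\left[a,b\right]^{2}$, is uniformly continuous there; hence there is $\rho>0$ such that $\left|x-x'\right|\le\rho$ forces $\left|f\left(s,x\right)-f\left(s,x'\right)\right|\le\varepsilon$ simultaneously for every $s\in\left[a,b\right]$. This controls the dependence of $H$ on the \emph{second} argument of $f$ uniformly in the first argument.

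I would then separate the two effects that distinguish $H\left(x\right)$ from $H\left(x_{0}\right)$: the change of the function $f\left(\cdot,x\right)\mapsto f\left(\cdot,x_{0}\right)$, and the change of the interval $[a,x]\mapsto[a,x_{0}]$. For the first, replacing $f\left(\cdot,x\right)$ by $f\left(\cdot,x_{0}\right)$ alters each value by at most $\varepsilon$ when $\left|x-x_{0}\right|\le\rho$, hence alters each difference $\left|f\left(s,\cdot\right)-f\left(t,\cdot\right)\right|$ by at most $2\varepsilon$, so the corresponding suprema differ by at most $2\varepsilon$. It therefore remains to compare $\omega_{1}\left(g,\delta,[a,x]\right)$ with $\omega_{1}\left(g,\delta,[a,x_{0}]\right)$ for the single function $g:=f\left(\cdot,x_{0}\right)$, which is continuous, hence uniformly continuous, on $\left[a,b\right]$.

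Assume first $x>x_{0}$, so $[a,x_{0}]\subseteq[a,x]$ and trivially $\omega_{1}\left(g,\delta,[a,x_{0}]\right)\le\omega_{1}\left(g,\delta,[a,x]\right)$. For the reverse estimate I would take any admissible pair $s,t\in[a,x]$ with $\left|s-t\right|\le\delta$ and project it onto $[a,x_{0}]$ by $s':=\min\left(s,x_{0}\right)$, $t':=\min\left(t,x_{0}\right)$. The map $u\mapsto\min\left(u,x_{0}\right)$ is non-expansive, so $\left|s'-t'\right|\le\left|s-t\right|\le\delta$ and the projected pair is again admissible, now lying in $[a,x_{0}]$; moreover $\left|s-s'\right|\le x-x_{0}\le\rho$ and likewise for $t$, whence (shrinking $\rho$ if necessary so that it also serves as a modulus for $g$) $\left|g\left(s\right)-g\left(s'\right)\right|\le\varepsilon$ and $\left|g\left(t\right)-g\left(t'\right)\right|\le\varepsilon$. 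Thus $\left|g\left(s\right)-g\left(t\right)\right|\le\omega_{1}\left(g,\delta,[a,x_{0}]\right)+2\varepsilon$, and taking the supremum over all such $s,t$ gives $\omega_{1}\left(g,\delta,[a,x]\right)\le\omega_{1}\left(g,\delta,[a,x_{0}]\right)+2\varepsilon$. The case $x<x_{0}$ is symmetric, projecting onto $[a,x]$ via $\min\left(\cdot,x\right)$ instead. Combining the two effects yields $\left|H\left(x\right)-H\left(x_{0}\right)\right|\le4\varepsilon$ whenever $\left|x-x_{0}\right|\le\rho$, which proves continuity at the arbitrary point $x_{0}$, and hence on all of $\left[a,b\right]$.

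The one point that requires care — and the analogue of the only genuine obstacle in Theorem \ref{t27} — is that the interval and the function both vary with $x$ at once; the device that resolves it is the non-expansive projection $\min\left(\cdot,x_{0}\right)$, which keeps the $\delta$-separation constraint intact while moving each endpoint by at most $\left|x-x_{0}\right|$. Everything else is routine $\varepsilon$ bookkeeping resting on the uniform continuity of $f$ on the compact square.
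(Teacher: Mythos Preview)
The paper does not actually prove this theorem: it is quoted from \cite{6} (Anastassiou, \emph{Fractional Korovkin theory}) and carries no proof here, so there is no in-paper argument to compare against. Your proposed proof is correct and is the natural argument one expects: uniform continuity of $f$ on the compact square handles the dependence on the second variable, and the non-expansive projection $u\mapsto\min(u,x_{0})$ is exactly the right device to compare $\omega_{1}(g,\delta,[a,x])$ with $\omega_{1}(g,\delta,[a,x_{0}])$ without disturbing the $\delta$-constraint. The one place to be slightly more explicit is your parenthetical ``shrinking $\rho$ if necessary'': since $f$ is uniformly continuous on $[a,b]^{2}$, a single $\rho$ can be chosen at the outset so that $|s-s'|\le\rho$ implies $|f(s,x_{0})-f(s',x_{0})|\le\varepsilon$ uniformly in $x_{0}$, which is all you need.
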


We need

\begin{remark}
\label{re29}(\cite{6}) Let $f\in C^{n-1}\left( \left[ a,b\right] \right) $, $%
f^{\left( n\right) }\in L_{\infty }\left( \left[ a,b\right] \right) $, $%
n=\left\lceil \nu \right\rceil $, $\nu >0$, $\nu \notin \mathbb{N}$. Then we
have 
\begin{equation}
\left| D_{\ast a}^{\nu }f\left( x\right) \right| \leq \frac{\left\|
f^{\left( n\right) }\right\| _{\infty }}{\Gamma \left( n-\nu +1\right) }%
\left( x-a\right) ^{n-\nu }\text{, \ }\forall \text{ }x\in \left[ a,b\right]
.  \tag{79}  \label{r79}
\end{equation}%
Thus we observe 
\begin{equation*}
\omega _{1}\left( D_{\ast a}^{\nu }f,\delta \right) =\underset{\left|
x-y\right| \leq \delta }{\underset{x,y\in \left[ a,b\right] }{\sup }}\left|
D_{\ast a}^{\nu }f\left( x\right) -D_{\ast a}^{\nu }f\left( y\right) \right|
\end{equation*}%
\begin{equation*}
\leq \underset{\left| x-y\right| \leq \delta }{\underset{x,y\in \left[ a,b%
\right] }{\sup }}\left( \frac{\left\| f^{\left( n\right) }\right\| _{\infty }%
}{\Gamma \left( n-\nu +1\right) }\left( x-a\right) ^{n-\nu }+\frac{\left\|
f^{\left( n\right) }\right\| _{\infty }}{^{\Gamma \left( n-\nu +1\right) }}%
\left( y-a\right) ^{n-\nu }\right)
\end{equation*}%
\begin{equation*}
\leq \frac{2\left\| f^{\left( n\right) }\right\| _{\infty }}{\Gamma \left(
n-\nu +1\right) }\left( b-a\right) ^{n-\nu }.
\end{equation*}%
Consequently 
\begin{equation}
\omega _{1}\left( D_{\ast a}^{\nu }f,\delta \right) \leq \frac{2\left\|
f^{\left( n\right) }\right\| _{\infty }}{\Gamma \left( n-\nu +1\right) }%
\left( b-a\right) ^{n-\nu }.  \tag{80}  \label{r80}
\end{equation}

Similarly, let $f\in C^{m-1}\left( \left[ a,b\right] \right) $, $f^{\left(
m\right) }\in L_{\infty }\left( \left[ a,b\right] \right) $, $m=\left\lceil
\alpha \right\rceil $, $\alpha >0$, $\alpha \notin \mathbb{N}$, then 
\begin{equation}
\omega _{1}\left( D_{b-}^{\alpha }f,\delta \right) \leq \frac{2\left\|
f^{\left( m\right) }\right\| _{\infty }}{\Gamma \left( m-\alpha +1\right) }%
\left( b-a\right) ^{m-\alpha }.  \tag{81}  \label{r81}
\end{equation}%
So for $f\in C^{m-1}\left( \left[ a,b\right] \right) $, $f^{\left( m\right)
}\in L_{\infty }\left( \left[ a,b\right] \right) $, $m=\left\lceil \alpha
\right\rceil $, $\alpha >0$, $\alpha \notin \mathbb{N}$, we find 
\begin{equation}
\underset{x_{0}\in \left[ a,b\right] }{\sup }\omega _{1}\left( D_{\ast
x_{0}}^{\alpha }f,\delta \right) _{\left[ x_{0},b\right] }\leq \frac{%
2\left\| f^{\left( m\right) }\right\| _{\infty }}{\Gamma \left( m-\alpha
+1\right) }\left( b-a\right) ^{m-\alpha },  \tag{82}  \label{r82}
\end{equation}%
and 
\begin{equation}
\underset{x_{0}\in \left[ a,b\right] }{\sup }\omega _{1}\left(
D_{x_{0}-}^{\alpha }f,\delta \right) _{\left[ a,x_{0}\right] }\leq \frac{%
2\left\| f^{\left( m\right) }\right\| _{\infty }}{\Gamma \left( m-\alpha
+1\right) }\left( b-a\right) ^{m-\alpha }.  \tag{83}  \label{r83}
\end{equation}

By Proposition 15.114, p. 388 of \cite{5}, we get here that $D_{\ast
x_{0}}^{\alpha }f\in C\left( \left[ x_{0},b\right] \right) $, and by \cite{8}
we obtain that $D_{x_{0}-}^{\alpha }f\in C\left( \left[ a,x_{0}\right]
\right) $.
\end{remark}

Here comes our main fractional result

\begin{theorem}
\label{t30}Let $\alpha >0$, $N=\left\lceil \alpha \right\rceil $, $\alpha
\notin \mathbb{N},$ $f\in AC^{N}\left( \left[ a,b\right] \right) $, with $%
f^{\left( N\right) }\in L_{\infty }\left( \left[ a,b\right] \right) ,$ $%
0<\beta <1$, $x\in \left[ a,b\right] $, $n\in \mathbb{N}$, $n^{1-\beta }\geq
3$. Then

i) 
\begin{equation}
\left| A_{n}\left( f,x\right) -\sum_{j=1}^{N-1}\frac{f^{\left( j\right)
}\left( x\right) }{j!}A_{n}\left( \left( \cdot -x\right) ^{j}\right) \left(
x\right) -f\left( x\right) \right| \leq  \tag{84}  \label{r84}
\end{equation}%
\begin{equation*}
\frac{\left( 4.019\right) }{\Gamma \left( \alpha +1\right) }\cdot \left\{ 
\frac{\left( \omega _{1}\left( D_{x-}^{\alpha }f,\frac{1}{n^{\beta }}\right)
_{\left[ a,x\right] }+\omega _{1}\left( D_{\ast x}^{\alpha }f,\frac{1}{%
n_{\beta }}\right) _{\left[ x,b\right] }\right) }{n^{\alpha \beta }}+\right.
\end{equation*}%
\begin{equation*}
\frac{1}{2\sqrt{\pi }\left( n^{1-\beta }-2\right) e^{\left( n^{1-\beta
}-2\right) ^{2}}}\cdot
\end{equation*}%
\begin{equation*}
\left. \left( \left\| D_{x-}^{\alpha }f\right\| _{\infty ,\left[ a,x\right]
}\left( x-a\right) ^{\alpha }+\left\| D_{\ast x}^{\alpha }f\right\| _{\infty
,\left[ x,b\right] }\left( b-x\right) ^{\alpha }\right) \right\} ,
\end{equation*}

ii) if $f^{\left( j\right) }\left( x\right) =0$, for $j=1,...,N-1$, we have 
\begin{equation}
\left| A_{n}\left( f,x\right) -f\left( x\right) \right| \leq \frac{\left(
4.019\right) }{\Gamma \left( \alpha +1\right) }\cdot  \tag{85}  \label{r85}
\end{equation}%
\begin{equation*}
\left\{ \frac{\left( \omega _{1}\left( D_{x-}^{\alpha }f,\frac{1}{n^{\beta }}%
\right) _{\left[ a,x\right] }+\omega _{1}\left( D_{\ast x}^{\alpha }f,\frac{1%
}{n_{\beta }}\right) _{\left[ x,b\right] }\right) }{n^{\alpha \beta }}%
+\right.
\end{equation*}%
\begin{equation*}
\frac{1}{2\sqrt{\pi }\left( n^{1-\beta }-2\right) e^{\left( n^{1-\beta
}-2\right) ^{2}}}\cdot
\end{equation*}%
\begin{equation*}
\left. \left( \left\| D_{x-}^{\alpha }f\right\| _{\infty ,\left[ a,x\right]
}\left( x-a\right) ^{\alpha }+\left\| D_{\ast x}^{\alpha }f\right\| _{\infty
,\left[ x,b\right] }\left( b-x\right) ^{\alpha }\right) \right\} ,
\end{equation*}%
when $\alpha >1$ notice here the extremely high rate of convergence at $%
n^{-\left( \alpha +1\right) \beta },$

iii) 
\begin{equation}
\left| A_{n}\left( f,x\right) -f\left( x\right) \right| \leq \left(
4.019\right) \cdot  \tag{86}  \label{r86}
\end{equation}%
\begin{equation*}
\left\{ \sum_{j=1}^{N-1}\frac{\left| f^{\left( j\right) }\left( x\right)
\right| }{j!}\left\{ \frac{1}{n^{\beta j}}+\left( b-a\right) ^{j}\frac{1}{2%
\sqrt{\pi }\left( n^{1-\beta }-2\right) e^{\left( n^{1-\beta }-2\right) ^{2}}%
}\right\} +\right.
\end{equation*}%
\begin{equation*}
\frac{1}{\Gamma \left( \alpha +1\right) }\left\{ \frac{\left( \omega
_{1}\left( D_{x-}^{\alpha }f,\frac{1}{n^{\beta }}\right) _{\left[ a,x\right]
}+\omega _{1}\left( D_{\ast x}^{\alpha }f,\frac{1}{n_{\beta }}\right) _{%
\left[ x,b\right] }\right) }{n^{\alpha \beta }}\right. +
\end{equation*}%
\begin{equation*}
\frac{1}{2\sqrt{\pi }\left( n^{1-\beta }-2\right) e^{\left( n^{1-\beta
}-2\right) ^{2}}}\cdot
\end{equation*}%
\begin{equation*}
\left. \left. \left( \left\| D_{x-}^{\alpha }f\right\| _{\infty ,\left[ a,x%
\right] }\left( x-a\right) ^{\alpha }+\left\| D_{\ast x}^{\alpha }f\right\|
_{\infty ,\left[ x,b\right] }\left( b-x\right) ^{\alpha }\right) \right\}
\right\} ,
\end{equation*}%
$\forall $ $x\in \left[ a,b\right] ,$

and

iv)%
\begin{equation*}
\left\| A_{n}f-f\right\| _{\infty }\leq \left( 4.019\right) \cdot
\end{equation*}%
\begin{equation*}
\left\{ \sum_{j=1}^{N-1}\frac{\left\| f^{\left( j\right) }\right\| _{\infty }%
}{j!}\left\{ \frac{1}{n^{\beta j}}+\left( b-a\right) ^{j}\frac{1}{2\sqrt{\pi 
}\left( n^{1-\beta }-2\right) e^{\left( n^{1-\beta }-2\right) ^{2}}}\right\}
+\right.
\end{equation*}%
\begin{equation}
\frac{1}{\Gamma \left( \alpha +1\right) }\left\{ \frac{\left( \underset{x\in %
\left[ a,b\right] }{\sup }\omega _{1}\left( D_{x-}^{\alpha }f,\frac{1}{%
n^{\beta }}\right) _{\left[ a,x\right] }+\underset{x\in \left[ a,b\right] }{%
\sup }\omega _{1}\left( D_{\ast x}^{\alpha }f,\frac{1}{n_{\beta }}\right) _{%
\left[ x,b\right] }\right) }{n^{\alpha \beta }}\right. +  \tag{87}
\label{r87}
\end{equation}%
\begin{equation*}
\frac{1}{2\sqrt{\pi }\left( n^{1-\beta }-2\right) e^{\left( n^{1-\beta
}-2\right) ^{2}}}\cdot
\end{equation*}%
\begin{equation*}
\left. \left. \left( b-a\right) ^{\alpha }\left( \underset{x\in \left[ a,b%
\right] }{\sup }\left\| D_{x-}^{\alpha }f\right\| _{\infty ,\left[ a,x\right]
}+\underset{x\in \left[ a,b\right] }{\sup }\left\| D_{\ast x}^{\alpha
}f\right\| _{\infty ,\left[ x,b\right] }\right) \right\} \right\} .
\end{equation*}%
Above, when $N=1$ the sum $\sum_{j=1}^{N-1}\cdot =0$.

As we see here we obtain fractionally type pointwise and uniform convergence
with rates of $A_{n}\rightarrow I$ the unit operator, as $n\rightarrow
\infty .$
\end{theorem}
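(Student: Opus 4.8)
The plan is to run the two-sided Caputo fractional Taylor formula through the operator $A_n$ and then control the fractional remainder exactly as the integer-order remainder $\Lambda_n$ was handled in Theorem \ref{t16}. First I would expand $f\left(\frac{k}{n}\right)$ about the fixed point $x$: when $\frac{k}{n}\geq x$ one writes
\[
f\left(\tfrac{k}{n}\right)=\sum_{j=0}^{N-1}\frac{f^{(j)}(x)}{j!}\left(\tfrac{k}{n}-x\right)^{j}+\frac{1}{\Gamma(\alpha)}\int_{x}^{\frac{k}{n}}\left(\tfrac{k}{n}-t\right)^{\alpha-1}D_{\ast x}^{\alpha}f(t)\,dt,
\]
and when $\frac{k}{n}\leq x$ the symmetric expansion with $D_{x-}^{\alpha}f$ and kernel $\left(t-\frac{k}{n}\right)^{\alpha-1}$ over $\left[\frac{k}{n},x\right]$. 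Convention \ref{c21} makes the ``wrong-sided'' derivative vanish, so the two cases combine cleanly. Multiplying by $\chi(nx-k)$, summing over $\lceil na\rceil\leq k\leq\lfloor nb\rfloor$, and dividing by $\sum_{k}\chi(nx-k)$, the $j=0$ term reproduces $f(x)$ (since $A_n$ preserves constants), the terms $j=1,\dots,N-1$ assemble into $\sum_{j=1}^{N-1}\frac{f^{(j)}(x)}{j!}A_n\!\left((\cdot-x)^{j}\right)(x)$, and what remains is the normalized fractional remainder $\mathcal{R}_n(x)$; thus part (i) is precisely the bound on $|\mathcal{R}_n(x)|$.

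To estimate $\mathcal{R}_n(x)$ I would split the sum according to whether $|nx-k|<n^{1-\beta}$ (equivalently $\left|\frac{k}{n}-x\right|<n^{-\beta}$) or $|nx-k|\geq n^{1-\beta}$, and separately according to the sign of $\frac{k}{n}-x$. The crucial input is that $D_{\ast x}^{\alpha}f(x)=0$ and $D_{x-}^{\alpha}f(x)=0$ (Lemmas \ref{l18} and \ref{l20}, applied with base point $x$), so that inside each integral one may replace $|D_{\ast x}^{\alpha}f(t)|$ by $|D_{\ast x}^{\alpha}f(t)-D_{\ast x}^{\alpha}f(x)|\leq\omega_{1}(D_{\ast x}^{\alpha}f,|t-x|)_{[x,b]}$, and likewise on the left. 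For the near terms $|t-x|\leq n^{-\beta}$, so after integrating the kernel, $\int_{x}^{\frac{k}{n}}\left(\frac{k}{n}-t\right)^{\alpha-1}dt=\frac{(\frac{k}{n}-x)^{\alpha}}{\alpha}\leq\frac{n^{-\alpha\beta}}{\alpha}$, one obtains the factor $\frac{1}{\Gamma(\alpha+1)n^{\alpha\beta}}\bigl(\omega_{1}(D_{x-}^{\alpha}f,n^{-\beta})_{[a,x]}+\omega_{1}(D_{\ast x}^{\alpha}f,n^{-\beta})_{[x,b]}\bigr)$, using $\sum_{k}\chi\leq1$ on each partial sum. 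For the far terms I would instead bound the fractional derivatives by their sup-norms and the kernel integral by $\frac{(b-x)^{\alpha}}{\alpha}$ (resp.\ $\frac{(x-a)^{\alpha}}{\alpha}$), and multiply by the tail sum, which Theorem \ref{t3} controls by $\frac{1}{2\sqrt{\pi}(n^{1-\beta}-2)e^{(n^{1-\beta}-2)^{2}}}$. Finally the factor $\frac{1}{\sum_{k}\chi(nx-k)}<4.019$ from Theorem \ref{t4} supplies the leading constant, yielding (i).

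Parts (ii)--(iv) then follow with little extra work. Part (ii) is immediate: the hypothesis $f^{(j)}(x)=0$ for $j=1,\dots,N-1$ kills the middle sum and leaves $|A_n(f,x)-f(x)|=|\mathcal{R}_n(x)|$. For (iii) I would retain the middle sum and bound each $A_n\!\left((\cdot-x)^{j}\right)(x)=A_n^{\ast}\!\left((\cdot-x)^{j}\right)(x)\big/\sum_{k}\chi(nx-k)$ using the monomial estimate (\ref{r69}) (with the exponent $\alpha$ there replaced by the present $\beta$) together with the same $4.019$ factor, producing the displayed $\sum_{j=1}^{N-1}$ contribution; adding $|\mathcal{R}_n(x)|$ gives (iii). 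Part (iv) is obtained by passing to the supremum over $x\in[a,b]$, replacing $|f^{(j)}(x)|$ by $\|f^{(j)}\|_{\infty}$, using $(x-a)^{\alpha},(b-x)^{\alpha}\leq(b-a)^{\alpha}$, and taking suprema of the one-sided moduli and sup-norms of the fractional derivatives, all finite by Remark \ref{re29}.

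I expect the main obstacle to be organizational rather than analytic: correctly matching the left/right Caputo derivative to the sign of $\frac{k}{n}-x$ so that the two one-sided moduli $\omega_{1}(D_{x-}^{\alpha}f,\cdot)_{[a,x]}$ and $\omega_{1}(D_{\ast x}^{\alpha}f,\cdot)_{[x,b]}$ appear with their correct domains, and making legitimate use of the vanishing $D_{\ast x}^{\alpha}f(x)=D_{x-}^{\alpha}f(x)=0$ at the \emph{moving} base point $x$. Everything else---the near/far splitting, the kernel integration, and the invocation of Theorems \ref{t3} and \ref{t4}---parallels the integer-order argument of Theorem \ref{t16}.
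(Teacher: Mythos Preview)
Your proposal is correct and follows essentially the same route as the paper: expand $f(k/n)$ via the left/right Caputo fractional Taylor formulas about $x$, split the resulting remainder sum into near ($|k/n-x|\leq n^{-\beta}$) and far pieces, bound the near piece via the one-sided moduli $\omega_1(D_{x-}^{\alpha}f,\cdot)_{[a,x]}$, $\omega_1(D_{\ast x}^{\alpha}f,\cdot)_{[x,b]}$ after using $D_{\ast x}^{\alpha}f(x)=D_{x-}^{\alpha}f(x)=0$, bound the far piece by sup-norms times the tail estimate of Theorem~\ref{t3}, and apply the factor $1/\chi(1)\simeq 4.019$ from Theorem~\ref{t4}. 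The only cosmetic difference is that the paper carries the argument at the level of the unnormalized operator $A_n^{\ast}(f,x)=\sum_k f(k/n)\chi(nx-k)$ and divides by $\sum_k\chi(nx-k)$ only at the very end (see (\ref{r95})--(\ref{r124}) and (\ref{r136})--(\ref{r137})), whereas you normalize earlier; the estimates are identical.
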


\begin{proof}
Let $x\in \left[ a,b\right] $. We have that $D_{x-}^{\alpha }f\left(
x\right) =D_{\ast x}^{\alpha }f\left( x\right) =0$.

From \cite{16}, p. 54, we get by the left Caputo fractional Taylor formula
that 
\begin{equation}
f\left( \frac{k}{n}\right) =\sum_{j=0}^{N-1}\frac{f^{\left( j\right) }\left(
x\right) }{j!}\left( \frac{k}{n}-x\right) ^{j}+  \tag{88}  \label{r88}
\end{equation}%
\begin{equation*}
\frac{1}{\Gamma \left( \alpha \right) }\int_{x}^{\frac{k}{n}}\left( \frac{k}{%
n}-J\right) ^{\alpha -1}\left( D_{\ast x}^{\alpha }f\left( J\right) -D_{\ast
x}^{\alpha }f\left( x\right) \right) dJ,
\end{equation*}%
for all $x\leq \frac{k}{n}\leq b.$

Also from \cite{4}, using the right Caputo fractional Taylor formula we get 
\begin{equation}
f\left( \frac{k}{n}\right) =\sum_{j=0}^{N-1}\frac{f^{\left( j\right) }\left(
x\right) }{j!}\left( \frac{k}{n}-x\right) ^{j}+  \tag{89}  \label{r89}
\end{equation}%
\begin{equation*}
\frac{1}{\Gamma \left( \alpha \right) }\int_{\frac{k}{n}}^{x}\left( J-\frac{k%
}{n}\right) ^{\alpha -1}\left( D_{x-}^{\alpha }f\left( J\right)
-D_{x-}^{\alpha }f\left( x\right) \right) dJ,
\end{equation*}%
for all $a\leq \frac{k}{n}\leq x.$

Hence we have 
\begin{equation}
f\left( \frac{k}{n}\right) \chi \left( nx-k\right) =\sum_{j=0}^{N-1}\frac{%
f^{\left( j\right) }\left( x\right) }{j!}\chi \left( nx-k\right) \left( 
\frac{k}{n}-x\right) ^{j}+  \tag{90}  \label{r90}
\end{equation}%
\begin{equation*}
\frac{\chi \left( nx-k\right) }{\Gamma \left( \alpha \right) }\int_{x}^{%
\frac{k}{n}}\left( \frac{k}{n}-J\right) ^{\alpha -1}\left( D_{\ast
x}^{\alpha }f\left( J\right) -D_{\ast x}^{\alpha }f\left( x\right) \right)
dJ,
\end{equation*}%
for all $x\leq \frac{k}{n}\leq b$, iff $\left\lceil nx\right\rceil \leq
k\leq \left\lfloor nb\right\rfloor $, and 
\begin{equation}
f\left( \frac{k}{n}\right) \chi \left( nx-k\right) =\sum_{j=0}^{N-1}\frac{%
f^{\left( j\right) }\left( x\right) }{j!}\chi \left( nx-k\right) \left( 
\frac{k}{n}-x\right) ^{j}+  \tag{91}  \label{r91}
\end{equation}%
\begin{equation*}
\frac{\chi \left( nx-k\right) }{\Gamma \left( \alpha \right) }\int_{\frac{k}{%
n}}^{x}\left( J-\frac{k}{n}\right) ^{\alpha -1}\left( D_{x-}^{\alpha
}f\left( J\right) -D_{x-}^{\alpha }f\left( x\right) \right) dJ,
\end{equation*}%
for all $a\leq \frac{k}{n}\leq x$, iff $\left\lceil na\right\rceil \leq
k\leq \left\lfloor nx\right\rfloor $.

We have that $\left\lceil nx\right\rceil \leq \left\lfloor nx\right\rfloor
+1.$

Therefore it holds 
\begin{equation}
\sum_{k=\left\lfloor nx\right\rfloor +1}^{\left\lfloor nb\right\rfloor
}f\left( \frac{k}{n}\right) \chi \left( nx-k\right) =\sum_{j=0}^{N-1}\frac{%
f^{\left( j\right) }\left( x\right) }{j!}\sum_{k=\left\lfloor
nx\right\rfloor +1}^{\left\lfloor nb\right\rfloor }\chi \left( nx-k\right)
\left( \frac{k}{n}-x\right) ^{j}+  \tag{92}  \label{r92}
\end{equation}%
\begin{equation*}
\frac{1}{\Gamma \left( \alpha \right) }\sum_{k=\left\lfloor nx\right\rfloor
+1}^{\left\lfloor nb\right\rfloor }\chi \left( nx-k\right) \int_{x}^{\frac{k%
}{n}}\left( \frac{k}{n}-J\right) ^{\alpha -1}\left( D_{\ast x}^{\alpha
}f\left( J\right) -D_{\ast x}^{\alpha }f\left( x\right) \right) dJ,
\end{equation*}%
and 
\begin{equation}
\sum_{k=\left\lceil na\right\rceil }^{\left\lfloor nx\right\rfloor }f\left( 
\frac{k}{n}\right) \chi \left( nx-k\right) =\sum_{j=0}^{N-1}\frac{f^{\left(
j\right) }\left( x\right) }{j!}\sum_{k=\left\lceil na\right\rceil
}^{\left\lfloor nx\right\rfloor }\chi \left( nx-k\right) \left( \frac{k}{n}%
-x\right) ^{j}+  \tag{93}  \label{r93}
\end{equation}%
\begin{equation*}
\frac{1}{\Gamma \left( \alpha \right) }\sum_{k=\left\lceil na\right\rceil
}^{\left\lfloor nx\right\rfloor }\chi \left( nx-k\right) \int_{\frac{k}{n}%
}^{x}\left( J-\frac{k}{n}\right) ^{\alpha -1}\left( D_{x-}^{\alpha }f\left(
J\right) -D_{x-}^{\alpha }f\left( x\right) \right) dJ.
\end{equation*}%
Adding the last two equalities (\ref{r92}) and (\ref{r93}) we obtain 
\begin{equation}
A_{n}^{\ast }\left( f,x\right) =\sum_{k=\left\lceil na\right\rceil
}^{\left\lfloor nb\right\rfloor }f\left( \frac{k}{n}\right) \chi \left(
nx-k\right) =  \tag{94}  \label{r94}
\end{equation}%
\begin{equation*}
\sum_{j=0}^{N-1}\frac{f^{\left( j\right) }\left( x\right) }{j!}%
\sum_{k=\left\lceil na\right\rceil }^{\left\lfloor nb\right\rfloor }\chi
\left( nx-k\right) \left( \frac{k}{n}-x\right) ^{j}+
\end{equation*}%
\begin{equation*}
\frac{1}{\Gamma \left( \alpha \right) }\left\{ \sum_{k=\left\lceil
na\right\rceil }^{\left\lfloor nx\right\rfloor }\chi \left( nx-k\right)
\int_{\frac{k}{n}}^{x}\left( J-\frac{k}{n}\right) ^{\alpha -1}\left(
D_{x-}^{\alpha }f\left( J\right) -D_{x-}^{\alpha }f\left( x\right) \right)
dJ+\right.
\end{equation*}%
\begin{equation*}
\left. \sum_{k=\left\lfloor nx\right\rfloor +1}^{\left\lfloor
nb\right\rfloor }\chi \left( nx-k\right) \int_{x}^{\frac{k}{n}}\left( \frac{k%
}{n}-J\right) ^{\alpha -1}\left( D_{\ast x}^{\alpha }f\left( J\right)
-D_{\ast x}^{\alpha }f\left( x\right) \right) dJ\right\} .
\end{equation*}%
So we have derived 
\begin{equation}
A_{n}^{\ast }\left( f,x\right) -f\left( x\right) \left( \sum_{k=\left\lceil
na\right\rceil }^{\left\lfloor nb\right\rfloor }\chi \left( nx-k\right)
\right) =  \tag{95}  \label{r95}
\end{equation}%
\begin{equation*}
\sum_{j=1}^{N-1}\frac{f^{\left( j\right) }\left( x\right) }{j!}A_{n}^{\ast
}\left( \left( \cdot -x\right) ^{j}\right) \left( x\right) +\theta
_{n}\left( x\right) ,
\end{equation*}%
where 
\begin{equation*}
\theta _{n}\left( x\right) :=\frac{1}{\Gamma \left( \alpha \right) }\left\{
\sum_{k=\left\lceil na\right\rceil }^{\left\lfloor nx\right\rfloor }\chi
\left( nx-k\right) \int_{\frac{k}{n}}^{x}\left( J-\frac{k}{n}\right)
^{\alpha -1}\left( D_{x-}^{\alpha }f\left( J\right) -D_{x-}^{\alpha }f\left(
x\right) \right) dJ\right.
\end{equation*}%
\begin{equation}
\left. +\sum_{k=\left\lfloor nx\right\rfloor +1}^{\left\lfloor
nb\right\rfloor }\chi \left( nx-k\right) \int_{x}^{\frac{k}{n}}\left( \frac{k%
}{n}-J\right) ^{\alpha -1}\left( D_{\ast x}^{\alpha }f\left( J\right)
-D_{\ast x}^{\alpha }f\left( x\right) \right) dJ\right\} .  \tag{96}
\label{r96}
\end{equation}%
We set 
\begin{equation}
\theta _{1n}\left( x\right) :=\frac{1}{\Gamma \left( \alpha \right) }%
\sum_{k=\left\lceil na\right\rceil }^{\left\lfloor nx\right\rfloor }\chi
\left( nx-k\right) \int_{\frac{k}{n}}^{x}\left( J-\frac{k}{n}\right)
^{\alpha -1}\left( D_{x-}^{\alpha }f\left( J\right) -D_{x-}^{\alpha }f\left(
x\right) \right) dJ,  \tag{97}  \label{r97}
\end{equation}%
and 
\begin{equation}
\theta _{2n}:=\frac{1}{\Gamma \left( \alpha \right) }\sum_{k=\left\lfloor
nx\right\rfloor +1}^{\left\lfloor nb\right\rfloor }\chi \left( nx-k\right)
\int_{x}^{\frac{k}{n}}\left( \frac{k}{n}-J\right) ^{\alpha -1}\left( D_{\ast
x}^{\alpha }f\left( J\right) -D_{\ast x}^{\alpha }f\left( x\right) \right)
dJ,  \tag{98}  \label{r98}
\end{equation}%
i.e. 
\begin{equation}
\theta _{n}\left( x\right) =\theta _{1n}\left( x\right) +\theta _{2n}\left(
x\right) .  \tag{99}  \label{r99}
\end{equation}%
We assume $b-a>\frac{1}{n^{\beta }}$, $0<\beta <1$, which is always the case
for large enough $n\in \mathbb{N}$, that is when $n>\left\lceil \left(
b-a\right) ^{-\frac{1}{\beta }}\right\rceil .$ It is always true that either 
$\left| \frac{k}{n}-x\right| \leq \frac{1}{n^{\beta }}$ or $\left| \frac{k}{n%
}-x\right| >\frac{1}{n^{\beta }}$.

For $k=\left\lceil na\right\rceil ,...,\left\lfloor nx\right\rfloor $, we
consider 
\begin{equation}
\gamma _{1k}:=\left| \int_{\frac{k}{n}}^{x}\left( J-\frac{k}{n}\right)
^{\alpha -1}\left( D_{x-}^{\alpha }f\left( J\right) -D_{x-}^{\alpha }f\left(
x\right) \right) dJ\right|  \tag{100}  \label{r100}
\end{equation}%
\begin{equation*}
=\left| \int_{\frac{k}{n}}^{x}\left( J-\frac{k}{n}\right) ^{\alpha
-1}D_{x-}^{\alpha }f\left( J\right) dJ\right| \leq \int_{\frac{k}{n}%
}^{x}\left( J-\frac{k}{n}\right) ^{\alpha -1}\left| D_{x-}^{\alpha }f\left(
J\right) \right| dJ
\end{equation*}%
\begin{equation}
\leq \left\| D_{x-}^{\alpha }f\right\| _{\infty ,\left[ a,x\right] }\frac{%
\left( x-\frac{\kappa }{n}\right) ^{\alpha }}{\alpha }\leq \left\|
D_{x-}^{\alpha }f\right\| _{\infty ,\left[ a,x\right] }\frac{\left(
x-a\right) ^{\alpha }}{\alpha }.  \tag{101}  \label{r101}
\end{equation}%
That is 
\begin{equation}
\gamma _{1k}\leq \left\| D_{x-}^{\alpha }f\right\| _{\infty ,\left[ a,x%
\right] }\frac{\left( x-a\right) ^{\alpha }}{\alpha },  \tag{102}
\label{r102}
\end{equation}%
for $k=\left\lceil na\right\rceil ,...,\left\lfloor nx\right\rfloor .$

Also we have in case of $\left| \frac{k}{n}-x\right| \leq \frac{1}{n^{\beta }%
}$ that 
\begin{equation}
\gamma _{1k}\leq \int_{\frac{k}{n}}^{x}\left( J-\frac{k}{n}\right) ^{\alpha
-1}\left| D_{x-}^{\alpha }f\left( J\right) -D_{x-}^{\alpha }f\left( x\right)
\right| dJ  \tag{103}  \label{r103}
\end{equation}%
\begin{equation*}
\leq \int_{\frac{k}{n}}^{x}\left( J-\frac{k}{n}\right) ^{\alpha -1}\omega
_{1}\left( D_{x-}^{\alpha }f,\left| J-x\right| \right) _{\left[ a,x\right]
}dJ
\end{equation*}%
\begin{equation*}
\leq \omega _{1}\left( D_{x-}^{\alpha }f,\left| x-\frac{k}{n}\right| \right)
_{\left[ a,x\right] }\int_{\frac{k}{n}}^{x}\left( J-\frac{k}{n}\right)
^{\alpha -1}dJ
\end{equation*}%
\begin{equation}
\leq \omega _{1}\left( D_{x-}^{\alpha }f,\frac{1}{n^{\beta }}\right) _{\left[
a,x\right] }\frac{\left( x-\frac{k}{n}\right) ^{\alpha }}{\alpha }\leq
\omega _{1}\left( D_{x-}^{\alpha }f,\frac{1}{n^{\beta }}\right) _{\left[ a,x%
\right] }\frac{1}{\alpha n^{a\beta }}.  \tag{104}  \label{r104}
\end{equation}%
That is when $\left| \frac{k}{n}-x\right| \leq \frac{1}{n^{\beta }}$, then 
\begin{equation}
\gamma _{1k}\leq \frac{\omega _{1}\left( D_{x-}^{\alpha }f,\frac{1}{n^{\beta
}}\right) _{\left[ a,x\right] }}{\alpha n^{a\beta }}.  \tag{105}
\label{r105}
\end{equation}%
Consequently we obtain

\begin{equation*}
\left| \theta _{1n}\left( x\right) \right| \leq \frac{1}{\Gamma \left(
\alpha \right) }\sum_{k=\left\lceil na\right\rceil }^{\left\lfloor
nx\right\rfloor }\chi \left( nx-k\right) \gamma _{1k}=
\end{equation*}%
\begin{equation*}
\frac{1}{\Gamma \left( \alpha \right) }\left\{ \sum_{\left\{ 
\begin{array}{c}
k=\left\lceil na\right\rceil \\ 
:\left| \frac{k}{n}-x\right| \leq \frac{1}{n^{\beta }}%
\end{array}%
\right. }^{\left\lfloor nx\right\rfloor }\chi \left( nx-k\right) \gamma
_{1k}+\sum_{\left\{ 
\begin{array}{c}
k=\left\lceil na\right\rceil \\ 
:\left| \frac{k}{n}-x\right| >\frac{1}{n^{\beta }}%
\end{array}%
\right. }^{\left\lfloor nx\right\rfloor }\chi \left( nx-k\right) \gamma
_{1k}\right\} \leq
\end{equation*}%
\begin{equation*}
\frac{1}{\Gamma \left( \alpha \right) }\left\{ \left( \sum_{\left\{ 
\begin{array}{c}
k=\left\lceil na\right\rceil \\ 
:\left| \frac{k}{n}-x\right| \leq \frac{1}{n^{\beta }}%
\end{array}%
\right. }^{\left\lfloor nx\right\rfloor }\chi \left( nx-k\right) \right) 
\frac{\omega _{1}\left( D_{x-}^{\alpha }f,\frac{1}{n^{\beta }}\right) _{%
\left[ a,x\right] }}{\alpha n^{\alpha \beta }}+\right.
\end{equation*}%
\begin{equation}
\left. \left( \sum_{\left\{ 
\begin{array}{c}
k=\left\lceil na\right\rceil \\ 
:\left| \frac{k}{n}-x\right| >\frac{1}{n^{\beta }}%
\end{array}%
\right. }^{\left\lfloor nx\right\rfloor }\chi \left( nx-k\right) \right)
\left\| D_{x-}^{\alpha }f\right\| _{\infty ,\left[ a,x\right] }\frac{\left(
x-a\right) ^{\alpha }}{\alpha }\right\} \leq  \tag{106}  \label{r106}
\end{equation}%
\begin{equation*}
\frac{1}{\Gamma \left( \alpha +1\right) }\left\{ \frac{\omega _{1}\left(
D_{x-}^{\alpha }f,\frac{1}{n^{\beta }}\right) _{\left[ a,x\right] }}{%
n^{\alpha \beta }}+\right.
\end{equation*}%
\begin{equation}
\left. \left( \sum_{\left\{ 
\begin{array}{c}
k=-\infty \\ 
:\left| nx-k\right| >n^{1-\beta }%
\end{array}%
\right. }^{\infty }\chi \left( nx-k\right) \right) \left\| D_{x-}^{\alpha
}f\right\| _{\infty ,\left[ a,x\right] }\left( x-a\right) ^{\alpha }\right\} 
\overset{\text{(\ref{r18})}}{\leq }  \tag{107}  \label{r107}
\end{equation}%
\begin{equation*}
\frac{1}{\Gamma \left( \alpha +1\right) }\left\{ \frac{\omega _{1}\left(
D_{x-}^{\alpha }f,\frac{1}{n^{\beta }}\right) _{\left[ a,x\right] }}{%
n^{\alpha \beta }}+\right.
\end{equation*}%
\begin{equation*}
\left. \frac{1}{2\sqrt{\pi }\left( n^{1-\beta }-2\right) e^{\left(
n^{1-\beta }-2\right) ^{2}}}\left\| D_{x-}^{\alpha }f\right\| _{\infty ,%
\left[ a,x\right] }\left( x-a\right) ^{\alpha }\right\} .
\end{equation*}%
So we have proved that 
\begin{equation}
\left| \theta _{1n}\left( x\right) \right| \leq \frac{1}{\Gamma \left(
\alpha +1\right) }\left\{ \frac{\omega _{1}\left( D_{x-}^{\alpha }f,\frac{1}{%
n^{\beta }}\right) _{\left[ a,x\right] }}{n^{\alpha \beta }}+\right. 
\tag{108}  \label{r108}
\end{equation}%
\begin{equation*}
\left. \frac{1}{2\sqrt{\pi }\left( n^{1-\beta }-2\right) e^{\left(
n^{1-\beta }-2\right) ^{2}}}\left\| D_{x-}^{\alpha }f\right\| _{\infty ,%
\left[ a,x\right] }\left( x-a\right) ^{\alpha }\right\} .
\end{equation*}%
Next when $k=\left\lfloor nx\right\rfloor +1,...,\left\lfloor
nb\right\rfloor $ we consider 
\begin{equation*}
\gamma _{2k}:=\left| \int_{x}^{\frac{k}{n}}\left( \frac{k}{n}-J\right)
^{\alpha -1}\left( D_{\ast x}^{\alpha }f\left( J\right) -D_{\ast x}^{\alpha
}f\left( x\right) \right) dJ\right| \leq
\end{equation*}%
\begin{equation*}
\int_{x}^{\frac{k}{n}}\left( \frac{k}{n}-J\right) ^{\alpha -1}\left| D_{\ast
x}^{\alpha }f\left( J\right) -D_{\ast x}^{\alpha }f\left( x\right) \right|
dJ=
\end{equation*}%
\begin{equation}
\int_{x}^{\frac{k}{n}}\left( \frac{k}{n}-J\right) ^{\alpha -1}\left| D_{\ast
x}^{\alpha }f\left( J\right) \right| dJ\leq \left\| D_{\ast x}^{\alpha
}f\right\| _{\infty ,\left[ x,b\right] }\frac{\left( \frac{k}{n}-x\right)
^{\alpha }}{\alpha }\leq  \tag{109}  \label{r109}
\end{equation}%
\begin{equation}
\left\| D_{\ast x}^{\alpha }f\right\| _{\infty ,\left[ x,b\right] }\frac{%
\left( b-x\right) ^{\alpha }}{\alpha }.  \tag{110}  \label{r110}
\end{equation}%
Therefore when $k=\left\lfloor nx\right\rfloor +1,...,\left\lfloor
nb\right\rfloor $ we get that 
\begin{equation}
\gamma _{2k}\leq \left\| D_{\ast x}^{\alpha }f\right\| _{\infty ,\left[ x,b%
\right] }\frac{\left( b-x\right) ^{\alpha }}{\alpha }.  \tag{111}
\label{r111}
\end{equation}%
In case of $\left| \frac{k}{n}-x\right| \leq \frac{1}{n^{\beta }}$, we get 
\begin{equation}
\gamma _{2k}\leq \int_{x}^{\frac{k}{n}}\left( \frac{k}{n}-J\right) ^{\alpha
-1}\omega _{1}\left( D_{\ast x}^{\alpha }f,\left| J-x\right| \right) _{\left[
x,b\right] }dJ\leq  \tag{112}  \label{r112}
\end{equation}%
\begin{equation*}
\omega _{1}\left( D_{\ast x}^{\alpha }f,\left| \frac{k}{n}-x\right| \right)
_{\left[ x,b\right] }\int_{x}^{\frac{k}{n}}\left( \frac{k}{n}-J\right)
^{\alpha -1}dJ\leq
\end{equation*}%
\begin{equation}
\omega _{1}\left( D_{\ast x}^{\alpha }f,\frac{1}{n^{\beta }}\right) _{\left[
x,b\right] }\frac{\left( \frac{k}{n}-x\right) ^{\alpha }}{\alpha }\leq
\omega _{1}\left( D_{\ast x}^{\alpha }f,\frac{1}{n^{\beta }}\right) _{\left[
x,b\right] }\frac{1}{\alpha n^{\alpha \beta }}.  \tag{113}  \label{r113}
\end{equation}%
So when $\left| \frac{k}{n}-x\right| \leq \frac{1}{n^{\beta }}$ we derived
that 
\begin{equation}
\gamma _{2k}\leq \frac{\omega _{1}\left( D_{\ast x}^{\alpha }f,\frac{1}{%
n^{\beta }}\right) _{\left[ x,b\right] }}{\alpha n^{\alpha \beta }}. 
\tag{114}  \label{r114}
\end{equation}%
Similarly we have that 
\begin{equation}
\left| \theta _{2n}\left( x\right) \right| \leq \frac{1}{\Gamma \left(
\alpha \right) }\left( \sum_{k=\left\lfloor nx\right\rfloor
+1}^{\left\lfloor nb\right\rfloor }\chi \left( nx-k\right) \gamma
_{2k}\right) =  \tag{115}  \label{r115}
\end{equation}%
\begin{equation*}
\frac{1}{\Gamma \left( \alpha \right) }\left\{ \sum_{\left\{ 
\begin{array}{c}
k=\left\lfloor nx\right\rfloor +1 \\ 
:\left| \frac{k}{n}-x\right| \leq \frac{1}{n^{\beta }}%
\end{array}%
\right. }^{\left\lfloor nb\right\rfloor }\chi \left( nx-k\right) \gamma
_{2k}+\sum_{\left\{ 
\begin{array}{c}
k=\left\lfloor nx\right\rfloor +1 \\ 
:\left| \frac{k}{n}-x\right| >\frac{1}{n^{\beta }}%
\end{array}%
\right. }^{\left\lfloor nb\right\rfloor }\chi \left( nx-k\right) \gamma
_{2k}\right\} \leq
\end{equation*}%
\begin{equation*}
\frac{1}{\Gamma \left( \alpha \right) }\left\{ \left( \sum_{\left\{ 
\begin{array}{c}
k=\left\lfloor nx\right\rfloor +1 \\ 
:\left| \frac{k}{n}-x\right| \leq \frac{1}{n^{\beta }}%
\end{array}%
\right. }^{\left\lfloor nb\right\rfloor }\chi \left( nx-k\right) \right) 
\frac{\omega _{1}\left( D_{\ast x}^{\alpha }f,\frac{1}{n^{\beta }}\right) _{%
\left[ x,b\right] }}{\alpha n^{\alpha \beta }}+\right.
\end{equation*}%
\begin{equation}
\left. \left( \sum_{\left\{ 
\begin{array}{c}
k=\left\lfloor nx\right\rfloor +1 \\ 
:\left| \frac{k}{n}-x\right| >\frac{1}{n^{\beta }}%
\end{array}%
\right. }^{\left\lfloor nb\right\rfloor }\chi \left( nx-k\right) \right)
\left\| D_{\ast x}^{\alpha }f\right\| _{\infty ,\left[ x,b\right] }\frac{%
\left( b-x\right) ^{\alpha }}{\alpha }\right\} \leq  \tag{116}  \label{r116}
\end{equation}%
\begin{equation*}
\frac{1}{\Gamma \left( \alpha +1\right) }\left\{ \frac{\omega _{1}\left(
D_{\ast x}^{\alpha }f,\frac{1}{n^{\beta }}\right) _{\left[ x,b\right] }}{%
n^{\alpha \beta }}+\right.
\end{equation*}%
\begin{equation}
\left. \left( \sum_{\left\{ 
\begin{array}{c}
k=-\infty \\ 
:\left| \frac{k}{n}-x\right| >\frac{1}{n^{\beta }}%
\end{array}%
\right. }^{\infty }\chi \left( nx-k\right) \right) \left\| D_{\ast
x}^{\alpha }f\right\| _{\infty ,\left[ x,b\right] }\left( b-x\right)
^{\alpha }\right\} \overset{\text{(\ref{r18})}}{\leq }  \tag{117}
\label{r117}
\end{equation}%
\begin{equation*}
\frac{1}{\Gamma \left( \alpha +1\right) }\left\{ \frac{\omega _{1}\left(
D_{\ast x}^{\alpha }f,\frac{1}{n^{\beta }}\right) _{\left[ x,b\right] }}{%
n^{\alpha \beta }}+\right.
\end{equation*}%
\begin{equation*}
\left. \frac{1}{2\sqrt{\pi }\left( n^{1-\beta }-2\right) e^{\left(
n^{1-\beta }-2\right) ^{2}}}\left\| D_{\ast x}^{\alpha }f\right\| _{\infty ,%
\left[ x,b\right] }\left( b-x\right) ^{\alpha }\right\} .
\end{equation*}%
So we have proved that 
\begin{equation}
\left| \theta _{2n}\left( x\right) \right| \leq \frac{1}{\Gamma \left(
\alpha +1\right) }\left\{ \frac{\omega _{1}\left( D_{\ast x}^{\alpha }f,%
\frac{1}{n^{\beta }}\right) _{\left[ x,b\right] }}{n^{\alpha \beta }}+\right.
\tag{118}  \label{r118}
\end{equation}%
\begin{equation*}
\left. \frac{1}{2\sqrt{\pi }\left( n^{1-\beta }-2\right) e^{\left(
n^{1-\beta }-2\right) ^{2}}}\left\| D_{\ast x}^{\alpha }f\right\| _{\infty ,%
\left[ x,b\right] }\left( b-x\right) ^{\alpha }\right\} .
\end{equation*}%
Therefore 
\begin{equation}
\left| \theta _{n}\left( x\right) \right| \leq \left| \theta _{1n}\left(
x\right) \right| +\left| \theta _{2n}\left( x\right) \right| \leq  \tag{119}
\label{r119}
\end{equation}%
\begin{equation}
\frac{1}{\Gamma \left( \alpha +1\right) }\left\{ \frac{\omega _{1}\left(
D_{x-}^{\alpha }f,\frac{1}{n^{\beta }}\right) _{\left[ a,x\right] }+\omega
_{1}\left( D_{\ast x}^{\alpha }f,\frac{1}{n^{\beta }}\right) _{\left[ x,b%
\right] }}{n^{\alpha \beta }}+\right.  \tag{120}  \label{r120}
\end{equation}%
\begin{equation*}
\left. \frac{1}{2\sqrt{\pi }\left( n^{1-\beta }-2\right) e^{\left(
n^{1-\beta }-2\right) ^{2}}}\left( \left\| D_{x-}^{\alpha }f\right\|
_{\infty ,\left[ a,x\right] }\left( x-a\right) ^{\alpha }+\left\| D_{\ast
x}^{\alpha }f\right\| _{\infty ,\left[ x,b\right] }\left( b-x\right)
^{\alpha }\right) \right\} .
\end{equation*}%
As in (\ref{r69}) we get that 
\begin{equation}
\left| A_{n}^{\ast }\left( \left( \cdot -x\right) ^{j}\right) \left(
x\right) \right| \leq \frac{1}{n^{\beta j}}+\left( b-a\right) ^{j}\frac{1}{2%
\sqrt{\pi }\left( n^{1-\beta }-2\right) e^{\left( n^{1-\beta }-2\right) ^{2}}%
},  \tag{121}  \label{r121}
\end{equation}%
for $j=1,...,N-1$, $\forall $ $x\in \left[ a,b\right] .$

Putting things together, we have established 
\begin{equation}
\left| A_{n}^{\ast }\left( f,x\right) -f\left( x\right) \left(
\sum_{k=\left\lceil na\right\rceil }^{\left\lfloor nb\right\rfloor }\chi
\left( nx-k\right) \right) \right| \leq  \tag{122}  \label{r122}
\end{equation}%
\begin{equation*}
\sum_{j=1}^{N-1}\frac{\left| f^{\left( j\right) }\left( x\right) \right| }{j!%
}\left[ \frac{1}{n^{\beta j}}+\left( b-a\right) ^{j}\frac{1}{2\sqrt{\pi }%
\left( n^{1-\beta }-2\right) e^{\left( n^{1-\beta }-2\right) ^{2}}}\right] +
\end{equation*}%
\begin{equation*}
\frac{1}{\Gamma \left( \alpha +1\right) }\left\{ \frac{\left( \omega
_{1}\left( D_{x-}^{\alpha }f,\frac{1}{n^{\beta }}\right) _{\left[ a,x\right]
}+\omega _{1}\left( D_{\ast x}^{\alpha }f,\frac{1}{n^{\beta }}\right) _{%
\left[ x,b\right] }\right) }{n^{\alpha \beta }}+\right.
\end{equation*}%
\begin{equation*}
\frac{1}{2\sqrt{\pi }\left( n^{1-\beta }-2\right) e^{\left( n^{1-\beta
}-2\right) ^{2}}}\cdot
\end{equation*}%
\begin{equation}
\left. \left( \left\| D_{x-}^{\alpha }f\right\| _{\infty ,\left[ a,x\right]
}\left( x-a\right) ^{\alpha }+\left\| D_{\ast x}^{\alpha }f\right\| _{\infty
,\left[ x,b\right] }\left( b-x\right) ^{\alpha }\right) \right\}
=:T_{n}\left( x\right) .  \tag{123}  \label{r123}
\end{equation}%
As a result, see (\ref{r39}), we derive 
\begin{equation}
\left| A_{n}\left( f,x\right) -f\left( x\right) \right| \leq \left(
4.019\right) T_{n}\left( x\right) ,  \tag{124}  \label{r124}
\end{equation}%
$\forall $ $x\in \left[ a,b\right] .$

We further have that 
\begin{equation}
\left\| T_{n}\right\| _{\infty }\leq \sum_{j=1}^{N-1}\frac{\left\| f^{\left(
j\right) }\right\| _{\infty }}{j!}\left[ \frac{1}{n^{\beta j}}+\left(
b-a\right) ^{j}\frac{1}{2\sqrt{\pi }\left( n^{1-\beta }-2\right) e^{\left(
n^{1-\beta }-2\right) ^{2}}}\right] +  \tag{125}  \label{r125}
\end{equation}%
\begin{equation*}
\frac{1}{\Gamma \left( \alpha +1\right) }\left\{ \frac{\left\{ \underset{%
x\in \left[ a,b\right] }{\sup }\left( \omega _{1}\left( D_{x-}^{\alpha }f,%
\frac{1}{n^{\beta }}\right) _{\left[ a,x\right] }\right) +\underset{x\in %
\left[ a,b\right] }{\sup }\left( \omega _{1}\left( D_{\ast x}^{\alpha }f,%
\frac{1}{n^{\beta }}\right) _{\left[ x,b\right] }\right) \right\} }{%
n^{\alpha \beta }}\right.
\end{equation*}%
\begin{equation*}
+\frac{1}{2\sqrt{\pi }\left( n^{1-\beta }-2\right) e^{\left( n^{1-\beta
}-2\right) ^{2}}}\left( b-a\right) ^{\alpha }\cdot
\end{equation*}%
\begin{equation*}
\left. \left\{ \left( \underset{x\in \left[ a,b\right] }{\sup }\left(
\left\| D_{x-}^{\alpha }f\right\| _{\infty ,\left[ a,x\right] }\right) +%
\underset{x\in \left[ a,b\right] }{\sup }\left( \left\| D_{\ast x}^{\alpha
}f\right\| _{\infty ,\left[ x,b\right] }\right) \right) \right\} \right\}
=:E_{n}.
\end{equation*}%
Hence it holds 
\begin{equation}
\left\| A_{n}f-f\right\| _{\infty }\leq \left( 4.019\right) E_{n}.  \tag{126}
\label{r126}
\end{equation}%
Since $f\in AC^{N}\left( \left[ a,b\right] \right) $, $N=\left\lceil \alpha
\right\rceil $, $\alpha >0$, $\alpha \notin \mathbb{N}$, $f^{\left( N\right)
}\in L_{\infty }\left( \left[ a,b\right] \right) $, $x\in \left[ a,b\right] $%
, then we get that $f\in AC^{N}\left( \left[ a,x\right] \right) $, $%
f^{\left( N\right) }\in L_{\infty }\left( \left[ a,x\right] \right) $ and $%
f\in AC^{N}\left( \left[ x,b\right] \right) $, $f^{\left( N\right) }\in
L_{\infty }\left( \left[ x,b\right] \right) .$

We have 
\begin{equation}
\left( D_{x-}^{\alpha }f\right) \left( y\right) =\frac{\left( -1\right) ^{N}%
}{\Gamma \left( N-\alpha \right) }\int_{y}^{x}\left( J-y\right) ^{N-\alpha
-1}f^{\left( N\right) }\left( J\right) dJ,  \tag{127}  \label{r127}
\end{equation}%
$\forall $ $y\in \left[ a,x\right] $ and 
\begin{equation}
\left| \left( D_{x-}^{\alpha }f\right) \left( y\right) \right| \leq \frac{1}{%
\Gamma \left( N-\alpha \right) }\left( \int_{y}^{x}\left( J-y\right)
^{N-\alpha -1}dJ\right) \left\| f^{\left( N\right) }\right\| _{\infty } 
\tag{128}  \label{r128}
\end{equation}%
\begin{equation*}
=\frac{1}{\Gamma \left( N-\alpha \right) }\frac{\left( x-y\right) ^{N-\alpha
}}{\left( N-\alpha \right) }\left\| f^{\left( N\right) }\right\| _{\infty }=
\end{equation*}%
\begin{equation*}
\frac{\left( x-y\right) ^{N-\alpha }}{\Gamma \left( N-\alpha +1\right) }%
\left\| f^{\left( N\right) }\right\| _{\infty }\leq \frac{\left( b-a\right)
^{N-\alpha }}{\Gamma \left( N-\alpha +1\right) }\left\| f^{\left( N\right)
}\right\| _{\infty }.
\end{equation*}%
That is 
\begin{equation}
\left\| D_{x-}^{\alpha }f\right\| _{\infty ,\left[ a,x\right] }\leq \frac{%
\left( b-a\right) ^{N-\alpha }}{\Gamma \left( N-\alpha +1\right) }\left\|
f^{\left( N\right) }\right\| _{\infty },  \tag{129}  \label{r129}
\end{equation}%
and 
\begin{equation}
\underset{x\in \left[ a,b\right] }{\sup }\left\| D_{x-}^{\alpha }f\right\|
_{\infty ,\left[ a,x\right] }\leq \frac{\left( b-a\right) ^{N-\alpha }}{%
\Gamma \left( N-\alpha +1\right) }\left\| f^{\left( N\right) }\right\|
_{\infty }.  \tag{130}  \label{r130}
\end{equation}%
Similarly we have 
\begin{equation}
\left( D_{\ast x}^{\alpha }f\right) \left( y\right) =\frac{1}{\Gamma \left(
N-\alpha \right) }\int_{x}^{y}\left( y-t\right) ^{N-\alpha -1}f^{\left(
N\right) }\left( t\right) dt,  \tag{131}  \label{r131}
\end{equation}%
$\forall $ $y\in \left[ x,b\right] .$

Thus we get 
\begin{equation*}
\left| \left( D_{\ast x}^{\alpha }f\right) \left( y\right) \right| \leq 
\frac{1}{\Gamma \left( N-\alpha \right) }\left( \int_{x}^{y}\left(
y-t\right) ^{N-\alpha -1}dt\right) \left\| f^{\left( N\right) }\right\|
_{\infty }\leq
\end{equation*}%
\begin{equation*}
\frac{1}{\Gamma \left( N-\alpha \right) }\frac{\left( y-x\right) ^{N-\alpha }%
}{\left( N-\alpha \right) }\left\| f^{\left( N\right) }\right\| _{\infty
}\leq \frac{\left( b-a\right) ^{N-\alpha }}{\Gamma \left( N-\alpha +1\right) 
}\left\| f^{\left( N\right) }\right\| _{\infty }.
\end{equation*}%
Hence 
\begin{equation}
\left\| D_{\ast x}^{\alpha }f\right\| _{\infty ,\left[ x,b\right] }\leq 
\frac{\left( b-a\right) ^{N-\alpha }}{\Gamma \left( N-\alpha +1\right) }%
\left\| f^{\left( N\right) }\right\| _{\infty },  \tag{132}  \label{r132}
\end{equation}%
and 
\begin{equation}
\underset{x\in \left[ a,b\right] }{\sup }\left\| D_{\ast x}^{\alpha
}f\right\| _{\infty ,\left[ x,b\right] }\leq \frac{\left( b-a\right)
^{N-\alpha }}{\Gamma \left( N-\alpha +1\right) }\left\| f^{\left( N\right)
}\right\| _{\infty }.  \tag{133}  \label{r133}
\end{equation}%
From (\ref{r82}) and (\ref{r83}) we get 
\begin{equation}
\underset{x\in \left[ a,b\right] }{\sup }\omega _{1}\left( D_{x-}^{\alpha }f,%
\frac{1}{n^{\beta }}\right) _{\left[ a,x\right] }\leq \frac{2\left\|
f^{\left( N\right) }\right\| _{\infty }}{\Gamma \left( N-\alpha +1\right) }%
\left( b-a\right) ^{N-\alpha },  \tag{134}  \label{r134}
\end{equation}%
and 
\begin{equation}
\underset{x\in \left[ a,b\right] }{\sup }\omega _{1}\left( D_{\ast
x}^{\alpha }f,\frac{1}{n^{\beta }}\right) _{\left[ x,b\right] }\leq \frac{%
2\left\| f^{\left( N\right) }\right\| _{\infty }}{\Gamma \left( N-\alpha
+1\right) }\left( b-a\right) ^{N-\alpha }.  \tag{135}  \label{r135}
\end{equation}%
So that $E_{n}<\infty .$

We finally notice that 
\begin{equation*}
A_{n}\left( f,x\right) -\dsum\limits_{j=1}^{N-1}\frac{f^{\left( j\right)
}\left( x\right) }{j!}A_{n}\left( \left( \cdot -x\right) ^{j}\right) \left(
x\right) -f\left( x\right) =\frac{A_{n}^{\ast }\left( f,x\right) }{\left(
\sum_{k=\left\lceil na\right\rceil }^{\left\lfloor nb\right\rfloor }\chi
\left( nx-k\right) \right) }
\end{equation*}%
\begin{equation*}
-\frac{1}{\left( \sum_{k=\left\lceil na\right\rceil }^{\left\lfloor
nb\right\rfloor }\chi \left( nx-k\right) \right) }\left(
\dsum\limits_{j=1}^{N-1}\frac{f^{\left( j\right) }\left( x\right) }{j!}%
A_{n}^{\ast }\left( \left( \cdot -x\right) ^{j}\right) \left( x\right)
\right) -f\left( x\right)
\end{equation*}%
\begin{equation}
=\frac{1}{\left( \sum_{k=\left\lceil na\right\rceil }^{\left\lfloor
nb\right\rfloor }\chi \left( nx-k\right) \right) }\cdot  \tag{136}
\label{r136}
\end{equation}%
\begin{equation*}
\left[ A_{n}^{\ast }\left( f,x\right) -\left( \dsum\limits_{j=1}^{N-1}\frac{%
f^{\left( j\right) }\left( x\right) }{j!}A_{n}^{\ast }\left( \left( \cdot
-x\right) ^{j}\right) \left( x\right) \right) -\left( \sum_{k=\left\lceil
na\right\rceil }^{\left\lfloor nb\right\rfloor }\chi \left( nx-k\right)
\right) f\left( x\right) \right] .
\end{equation*}%
Therefore we get 
\begin{equation*}
\left| A_{n}\left( f,x\right) -\dsum\limits_{j=1}^{N-1}\frac{f^{\left(
j\right) }\left( x\right) }{j!}A_{n}\left( \left( \cdot -x\right)
^{j}\right) \left( x\right) -f\left( x\right) \right| \leq \left(
4.019\right) \cdot
\end{equation*}%
\begin{equation}
\left| A_{n}^{\ast }\left( f,x\right) -\left( \dsum\limits_{j=1}^{N-1}\frac{%
f^{\left( j\right) }\left( x\right) }{j!}A_{n}^{\ast }\left( \left( \cdot
-x\right) ^{j}\right) \left( x\right) \right) -\left( \sum_{k=\left\lceil
na\right\rceil }^{\left\lfloor nb\right\rfloor }\chi \left( nx-k\right)
\right) f\left( x\right) \right| ,  \tag{137}  \label{r137}
\end{equation}%
$\forall $ $x\in \left[ a,b\right] .$

The proof of the theorem is now complete.
\end{proof}

Next we apply Theorem \ref{t30} for $N=1.$

\begin{corollary}
\label{c31}Let $0<\alpha ,\beta <1$, $n^{1-\beta }\geq 3,$ $f\in AC\left( %
\left[ a,b\right] \right) $, $f^{\prime }\in L_{\infty }\left( \left[ a,b%
\right] \right) $, $n\in \mathbb{N}$. Then 
\begin{equation}
\left\Vert A_{n}f-f\right\Vert _{\infty }\leq \frac{\left( 4.019\right) }{%
\Gamma \left( \alpha +1\right) }\cdot   \tag{138}  \label{r138}
\end{equation}%
\begin{equation*}
\left\{ \frac{\left( \underset{x\in \left[ a,b\right] }{\sup }\omega
_{1}\left( D_{x-}^{\alpha }f,\frac{1}{n^{\beta }}\right) _{\left[ a,x\right]
}+\underset{x\in \left[ a,b\right] }{\sup }\omega _{1}\left( D_{\ast
x}^{\alpha }f,\frac{1}{n_{\beta }}\right) _{\left[ x,b\right] }\right) }{%
n^{\alpha \beta }}\right. +
\end{equation*}%
\begin{equation*}
\frac{1}{2\sqrt{\pi }\left( n^{1-\beta }-2\right) e^{\left( n^{1-\beta
}-2\right) ^{2}}}\left( b-a\right) ^{\alpha }\cdot 
\end{equation*}%
\begin{equation*}
\left. \left( \underset{x\in \left[ a,b\right] }{\sup }\left\Vert
D_{x-}^{\alpha }f\right\Vert _{\infty ,\left[ a,x\right] }+\underset{x\in %
\left[ a,b\right] }{\sup }\left\Vert D_{\ast x}^{\alpha }f\right\Vert
_{\infty ,\left[ x,b\right] }\right) \right\} .
\end{equation*}
\end{corollary}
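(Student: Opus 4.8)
The plan is to obtain this result as the direct specialization of Theorem \ref{t30}(iv) to the case $N=1$. First I would verify that the hypotheses of the corollary place us exactly in that case. Since $0<\alpha<1$ we have $N=\left\lceil \alpha\right\rceil =1$ and $\alpha\notin\mathbb{N}$, so the standing assumptions $\alpha>0$, $\alpha\notin\mathbb{N}$ of Theorem \ref{t30} are met. Moreover $AC^{N}\left(\left[a,b\right]\right)=AC^{1}\left(\left[a,b\right]\right)=AC\left(\left[a,b\right]\right)$ and $f^{\left(N\right)}=f'$, so the requirement that $f\in AC^{N}\left(\left[a,b\right]\right)$ with $f^{\left(N\right)}\in L_{\infty}\left(\left[a,b\right]\right)$ is precisely the hypothesis $f\in AC\left(\left[a,b\right]\right)$, $f'\in L_{\infty}\left(\left[a,b\right]\right)$. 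The remaining numerical hypotheses $n\in\mathbb{N}$, $0<\beta<1$, $n^{1-\beta}\geq 3$ are shared verbatim, so Theorem \ref{t30} applies.

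With these identifications in place, I would simply invoke the uniform bound (\ref{r87}). The only structural change from the general estimate concerns the first summation $\sum_{j=1}^{N-1}$: as the theorem itself records, when $N=1$ this sum is empty and therefore vanishes. Consequently the entire line carrying $\|f^{\left(j\right)}\|_{\infty}$ and the powers $\left(b-a\right)^{j}$ disappears, and what survives is exactly the fractional term bearing the factor $\frac{1}{\Gamma\left(\alpha+1\right)}$ together with the common prefactor $\left(4.019\right)$. Reading off the surviving expression from (\ref{r87}) with $N=1$ then reproduces display (\ref{r138}).

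Since there is genuinely no new estimation to perform — every quantity appearing in (\ref{r138}) has already been controlled inside the proof of Theorem \ref{t30}, in particular the suprema of the moduli of continuity and of the Caputo-derivative norms being finite by (\ref{r130}), (\ref{r133}), (\ref{r134}), (\ref{r135}), which guarantees $E_{n}<\infty$ — there is no substantive obstacle here. The one point I would be careful about is purely a matter of bookkeeping: confirming that the empty-sum convention $\sum_{j=1}^{0}\cdot =0$ is applied consistently and that no hidden $j=0$ term has been absorbed elsewhere in (\ref{r87}), so that the specialized inequality matches (\ref{r138}) term by term. This verification is immediate once one lines up (\ref{r87}) at $N=1$ against the claimed bound.
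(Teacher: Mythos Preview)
Your proposal is correct and matches the paper's approach exactly: the corollary is stated as the specialization of Theorem~\ref{t30} to $N=1$, with the sum $\sum_{j=1}^{N-1}$ vanishing by the empty-sum convention, and the paper offers no further argument beyond this.
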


\begin{remark}
\label{re32}Let $0<\alpha <1$, then by (\ref{r130}), we get 
\begin{equation}
\underset{x\in \left[ a,b\right] }{\sup }\left\| D_{x-}^{\alpha }f\right\|
_{\infty ,\left[ a,x\right] }\leq \frac{\left( b-a\right) ^{1-\alpha }}{%
\Gamma \left( 2-\alpha \right) }\left\| f^{\prime }\right\| _{\infty }, 
\tag{139}  \label{r139}
\end{equation}%
and by (\ref{r133}), we obtain 
\begin{equation}
\underset{x\in \left[ a,b\right] }{\sup }\left\| D_{\ast x}^{\alpha
}f\right\| _{\infty ,\left[ x,b\right] }\leq \frac{\left( b-a\right)
^{1-\alpha }}{\Gamma \left( 2-\alpha \right) }\left\| f^{\prime }\right\|
_{\infty },  \tag{140}  \label{r140}
\end{equation}%
given that $f\in AC\left( \left[ a,b\right] \right) $ and $f^{\prime }\in
L_{\infty }\left( \left[ a,b\right] \right) .$
\end{remark}

Next we specialize to $\alpha =\frac{1}{2}.$

\begin{corollary}
\label{c33}Let $0<\beta <1$, $n^{1-\beta }\geq 3,$ $f\in AC\left( \left[ a,b%
\right] \right) $, $f^{\prime }\in L_{\infty }\left( \left[ a,b\right]
\right) $, $n\in \mathbb{N}$. Then%
\begin{equation*}
\left\Vert A_{n}f-f\right\Vert _{\infty }\leq \frac{\left( 8.038\right) }{%
\sqrt{\pi }}\cdot 
\end{equation*}%
\begin{equation*}
\left\{ \frac{\left( \underset{x\in \left[ a,b\right] }{\sup }\omega
_{1}\left( D_{x-}^{\frac{1}{2}}f,\frac{1}{n^{\beta }}\right) _{\left[ a,x%
\right] }+\underset{x\in \left[ a,b\right] }{\sup }\omega _{1}\left( D_{\ast
x}^{\frac{1}{2}}f,\frac{1}{n^{\beta }}\right) _{\left[ x,b\right] }\right) }{%
n^{\frac{\beta }{2}}}\right. +
\end{equation*}%
\begin{equation*}
\frac{1}{2\sqrt{\pi }\left( n^{1-\beta }-2\right) e^{\left( n^{1-\beta
}-2\right) ^{2}}}\sqrt{b-a}\cdot 
\end{equation*}%
\begin{equation}
\left. \left( \underset{x\in \left[ a,b\right] }{\sup }\left\Vert D_{x-}^{%
\frac{1}{2}}f\right\Vert _{\infty ,\left[ a,x\right] }+\underset{x\in \left[
a,b\right] }{\sup }\left\Vert D_{\ast x}^{\frac{1}{2}}f\right\Vert _{\infty ,%
\left[ x,b\right] }\right) \right\} ,  \tag{141}  \label{r141}
\end{equation}
\end{corollary}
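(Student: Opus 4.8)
The plan is to obtain (\ref{r141}) as a direct specialization of Corollary \ref{c31} to the value $\alpha=\frac{1}{2}$. First I would verify that the hypotheses match: the condition $0<\frac{1}{2}<1$ holds, and the remaining assumptions $0<\beta<1$, $n^{1-\beta}\geq 3$, $f\in AC([a,b])$, and $f'\in L_{\infty}([a,b])$ are precisely those imposed in Corollary \ref{c31}. Hence the uniform bound of Corollary \ref{c31} applies verbatim with $\alpha=\frac{1}{2}$, and the entire content of the present statement is the evaluation of that bound at this particular exponent.

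The only genuine computation is the leading constant $\frac{4.019}{\Gamma(\alpha+1)}$ at $\alpha=\frac{1}{2}$. Using the standard half-integer value $\Gamma\!\left(\frac{3}{2}\right)=\frac{1}{2}\,\Gamma\!\left(\frac{1}{2}\right)=\frac{\sqrt{\pi}}{2}$, I would compute
$$\frac{4.019}{\Gamma\!\left(\frac{3}{2}\right)}=\frac{4.019}{\tfrac{\sqrt{\pi}}{2}}=\frac{2\,(4.019)}{\sqrt{\pi}}=\frac{8.038}{\sqrt{\pi}},$$
which is exactly the factor standing in front of the braces in (\ref{r141}).

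Finally I would substitute $\alpha=\frac{1}{2}$ into the remaining $\alpha$-dependent terms on the right-hand side of Corollary \ref{c31}: the denominator exponent $n^{\alpha\beta}$ becomes $n^{\beta/2}$, the power $(b-a)^{\alpha}$ becomes $\sqrt{b-a}$, and the right and left Caputo fractional derivatives $D_{x-}^{\alpha}f$, $D_{\ast x}^{\alpha}f$ become $D_{x-}^{1/2}f$, $D_{\ast x}^{1/2}f$. With these replacements, the estimate of Corollary \ref{c31} reads precisely as (\ref{r141}), establishing the claim. I expect no real obstacle here: the argument is purely a specialization of an already-proved inequality, and the only points requiring care are the correct half-integer evaluation of the Gamma function and the consistent propagation of $\alpha=\frac{1}{2}$ through the lengthy expression.
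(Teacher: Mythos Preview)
Your proposal is correct and matches the paper's approach exactly: the paper introduces this corollary with the phrase ``Next we specialize to $\alpha=\frac{1}{2}$,'' so the intended proof is precisely the substitution you describe, including the computation $\frac{4.019}{\Gamma(3/2)}=\frac{8.038}{\sqrt{\pi}}$.
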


\begin{remark}
\label{re34}(to Corollary \ref{c33}) Assume that 
\begin{equation}
\omega _{1}\left( D_{x-}^{\frac{1}{2}}f,\frac{1}{n^{\beta }}\right) _{\left[
a,x\right] }\leq \frac{K_{1}}{n^{\beta }},  \tag{142}  \label{r142}
\end{equation}%
and 
\begin{equation}
\omega _{1}\left( D_{\ast x}^{\frac{1}{2}}f,\frac{1}{n_{\beta }}\right) _{%
\left[ x,b\right] }\leq \frac{K_{2}}{n^{\beta }},  \tag{143}  \label{r143}
\end{equation}%
$\forall $ $x\in \left[ a,b\right] $, $\forall $ $n\in \mathbb{N}$, where $%
K_{1},K_{2}>0.$

Then for large enough $n\in \mathbb{N}$, by (\ref{r141}), we obtain 
\begin{equation}
\left\| A_{n}f-f\right\| _{\infty }\leq \frac{M}{n^{\frac{3}{2}\beta }}, 
\tag{144}  \label{r144}
\end{equation}%
for some $M>0.$

The speed of convergence in (\ref{r144}) is much higher than the
corresponding speeds achieved in (\ref{r40}), which were there $\frac{1}{%
n^{\beta }}.$
\end{remark}

\section{Complex Neural Network Approximations}

We make

\begin{remark}
\label{re35}Let $X:=\left[ a,b\right] $, $\mathbb{R}$ and $f:X\rightarrow 
\mathbb{C}$ with real and imaginary parts $f_{1},f_{2}:f=f_{1}+if_{2}$, $i=%
\sqrt{-1}$. Clearly $f$ is continuous iff $f_{1}$ and $f_{2}$ are continuous.

Also it holds 
\begin{equation}
f^{\left( j\right) }\left( x\right) =f_{1}^{\left( j\right) }\left( x\right)
+if_{2}^{\left( j\right) }\left( x\right) ,  \tag{145}  \label{r145}
\end{equation}%
for all $j=1,...,N$, given that $f_{1},f_{2}\in C^{N}\left( X\right) $, $%
N\in \mathbb{N}$.

We denote by $C_{B}\left( \mathbb{R},\mathbb{C}\right) $ the space of
continuous and bounded functions $f:\mathbb{R}\rightarrow \mathbb{C}$.
Clearly $f$ is bounded, iff both $f_{1},f_{2}$ are bounded from $\mathbb{R}$
into $\mathbb{R}$, where $f=f_{1}+if_{2}.$

Here we define 
\begin{equation}
A_{n}\left( f,x\right) :=A_{n}\left( f_{1},x\right) +iA_{n}\left(
f_{2},x\right) ,  \tag{146}  \label{r146}
\end{equation}%
and 
\begin{equation}
B_{n}\left( f,x\right) :=B_{n}\left( f_{1},x\right) +iB_{n}\left(
f_{2},x\right) .  \tag{147}  \label{r147}
\end{equation}%
We observe here that 
\begin{equation}
\left| A_{n}\left( f,x\right) -f\left( x\right) \right| \leq \left|
A_{n}\left( f_{1},x\right) -f_{1}\left( x\right) \right| +\left| A_{n}\left(
f_{2},x\right) -f_{2}\left( x\right) \right| ,  \tag{148}  \label{r148}
\end{equation}%
and 
\begin{equation}
\left\| A_{n}\left( f\right) -f\right\| _{\infty }\leq \left\| A_{n}\left(
f_{1}\right) -f_{1}\right\| _{\infty }+\left\| A_{n}\left( f_{2}\right)
-f_{2}\right\| _{\infty }.  \tag{149}  \label{r149}
\end{equation}%
Similarly we get 
\begin{equation}
\left| B_{n}\left( f,x\right) -f\left( x\right) \right| \leq \left|
B_{n}\left( f_{1},x\right) -f_{1}\left( x\right) \right| +\left| B_{n}\left(
f_{2},x\right) -f_{2}\left( x\right) \right| ,  \tag{150}  \label{r150}
\end{equation}%
and 
\begin{equation}
\left\| B_{n}\left( f\right) -f\right\| _{\infty }\leq \left\| B_{n}\left(
f_{1}\right) -f_{1}\right\| _{\infty }+\left\| B_{n}\left( f_{2}\right)
-f_{2}\right\| _{\infty }.  \tag{151}  \label{r151}
\end{equation}
\end{remark}

We present

\begin{theorem}
\label{t36}Let $f\in C\left( \left[ a,b\right] ,\mathbb{C}\right) ,$ $%
f=f_{1}+if_{2}$, $0<\alpha <1,$ $n\in \mathbb{N}$, $n^{1-\alpha }\geq 3,$ $%
x\in \left[ a,b\right] $. Then

i) 
\begin{equation}
\left| A_{n}\left( f,x\right) -f\left( x\right) \right| \leq \left(
4.019\right) \cdot  \tag{152}  \label{r152}
\end{equation}%
\begin{equation*}
\left[ \left( \omega _{1}\left( f_{1},\frac{1}{n^{\alpha }}\right) +\omega
_{1}\left( f_{2},\frac{1}{n^{\alpha }}\right) \right) +\left( \left\|
f_{1}\right\| _{\infty }+\left\| f_{2}\right\| _{\infty }\right) \frac{1}{%
\sqrt{\pi }\left( n^{1-\alpha }-2\right) e^{\left( n^{1-\alpha }-2\right)
^{2}}}\right]
\end{equation*}%
\begin{equation*}
=:\psi _{1},
\end{equation*}%
and

ii) 
\begin{equation}
\left\| A_{n}\left( f\right) -f\right\| _{\infty }\leq \psi _{1}.  \tag{153}
\label{r153}
\end{equation}
\end{theorem}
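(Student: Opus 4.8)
The plan is to reduce the complex-valued statement entirely to the real-valued Theorem \ref{t12} by splitting $f$ into its real and imaginary parts. Writing $f=f_{1}+if_{2}$ with $f_{1},f_{2}\in C([a,b])$ real-valued, the defining sum in Definition \ref{d7} is linear in $f$, so by the convention (\ref{r146}) we have $A_{n}(f,x)-f(x)=(A_{n}(f_{1},x)-f_{1}(x))+i(A_{n}(f_{2},x)-f_{2}(x))$. The elementary inequality $|z_{1}+iz_{2}|\leq|z_{1}|+|z_{2}|$ then yields exactly (\ref{r148}); this decomposition, already recorded in Remark \ref{re35}, is the starting point I would use.

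For part i) I would apply Theorem \ref{t12}(1) separately to the real continuous functions $f_{1}$ and $f_{2}$ on $[a,b]$, obtaining
\[
\left|A_{n}(f_{j},x)-f_{j}(x)\right|\leq(4.019)\left[\omega_{1}\left(f_{j},\frac{1}{n^{\alpha}}\right)+\frac{\|f_{j}\|_{\infty}}{\sqrt{\pi}(n^{1-\alpha}-2)e^{(n^{1-\alpha}-2)^{2}}}\right]
\]
for $j=1,2$. Adding these two bounds and substituting into (\ref{r148}), the two moduli of continuity combine into $\omega_{1}(f_{1},1/n^{\alpha})+\omega_{1}(f_{2},1/n^{\alpha})$ while the two supremum-norm terms share the common fractional weight and factor out, producing precisely the quantity $\psi_{1}$ defined in (\ref{r152}).

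For part ii) I would run the identical argument starting from the uniform inequality (\ref{r149}) in place of (\ref{r148}), invoking Theorem \ref{t12}(2) for each of $f_{1}$ and $f_{2}$. Since taking the supremum over $x\in[a,b]$ commutes with the finite sum of the two componentwise estimates, the same bound $\psi_{1}$ results, establishing (\ref{r153}).

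There is essentially no obstacle here: the hypothesis that $f$ is continuous on $[a,b]$ is equivalent to continuity of both $f_{1}$ and $f_{2}$, as noted in Remark \ref{re35}, so Theorem \ref{t12} applies verbatim to each part. The only point requiring a moment of bookkeeping is verifying that the constant $4.019$ and the fractional weight are common to both componentwise estimates so that they factor cleanly, and that the two modulus-of-continuity contributions are kept separate as $\omega_{1}(f_{1},\cdot)+\omega_{1}(f_{2},\cdot)$ rather than collapsed into a single term.
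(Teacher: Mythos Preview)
Your proposal is correct and follows exactly the approach the paper indicates: the paper's proof consists of the single line ``Based on Remark \ref{re35} and Theorem \ref{t12},'' and your argument is precisely the natural unpacking of that sentence, using (\ref{r148})--(\ref{r149}) from Remark \ref{re35} together with the componentwise application of Theorem \ref{t12}.
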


\begin{proof}
Based on Remark \ref{re35} and Theorem \ref{t12}.
\end{proof}

We give

\begin{theorem}
\label{t37}Let $f\in C_{B}\left( \mathbb{R},\mathbb{C}\right) $, $%
f=f_{1}+if_{2},$ $0<\alpha <1$, $n\in \mathbb{N}$, $n^{1-\alpha }\geq 3,$ $%
x\in \mathbb{R}$. Then

i) 
\begin{equation}
\left| B_{n}\left( f,x\right) -f\left( x\right) \right| \leq \left( \omega
_{1}\left( f_{1},\frac{1}{n^{\alpha }}\right) +\omega _{1}\left( f_{2},\frac{%
1}{n^{\alpha }}\right) \right) +  \tag{154}  \label{r154}
\end{equation}%
\begin{equation*}
\left( \left\| f_{1}\right\| _{\infty }+\left\| f_{2}\right\| _{\infty
}\right) \frac{1}{\sqrt{\pi }\left( n^{1-\alpha }-2\right) e^{\left(
n^{1-\alpha }-2\right) ^{2}}}=:\psi _{2},
\end{equation*}

ii) 
\begin{equation}
\left\| B_{n}\left( f\right) -f\right\| _{\infty }\leq \psi _{2}.  \tag{155}
\label{r155}
\end{equation}
\end{theorem}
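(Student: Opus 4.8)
The plan is to reduce the complex estimate to the real-valued bound already established in Theorem \ref{t13}, exactly as Theorem \ref{t36} reduces the corresponding $A_{n}$ statement to Theorem \ref{t12}. First I would invoke the definition $B_{n}(f,x) = B_{n}(f_{1},x) + iB_{n}(f_{2},x)$ from (\ref{r147}) in Remark \ref{re35}, which expresses the complex operator as the sum of its action on the real and imaginary parts. Since $f = f_{1} + if_{2}$, the difference splits as $B_{n}(f,x) - f(x) = (B_{n}(f_{1},x) - f_{1}(x)) + i(B_{n}(f_{2},x) - f_{2}(x))$, and the elementary inequality $|u + iv| \leq |u| + |v|$ for real $u,v$ yields precisely (\ref{r150}).

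Next I would apply Theorem \ref{t13} part 1) separately to each real-valued component $f_{1},f_{2} \in C_{B}(\mathbb{R})$. This is legitimate because, as noted in Remark \ref{re35}, $f \in C_{B}(\mathbb{R},\mathbb{C})$ is bounded iff both $f_{1}$ and $f_{2}$ are bounded, so both components satisfy the hypotheses of Theorem \ref{t13} under the shared assumption $n^{1-\alpha} \geq 3$. Each application produces the bound $\omega_{1}(f_{j}, \frac{1}{n^{\alpha}}) + \frac{\|f_{j}\|_{\infty}}{\sqrt{\pi}(n^{1-\alpha}-2)e^{(n^{1-\alpha}-2)^{2}}}$ for $j = 1,2$.

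Adding the two component bounds then gives part i): the moduli of continuity combine to $\omega_{1}(f_{1}, \frac{1}{n^{\alpha}}) + \omega_{1}(f_{2}, \frac{1}{n^{\alpha}})$, and the two identical tail terms combine to $(\|f_{1}\|_{\infty} + \|f_{2}\|_{\infty})$ times the common factor $\frac{1}{\sqrt{\pi}(n^{1-\alpha}-2)e^{(n^{1-\alpha}-2)^{2}}}$, which is exactly $\psi_{2}$. For part ii), I would either take the supremum over $x \in \mathbb{R}$ on both sides of part i), or equivalently start from (\ref{r151}) and apply Theorem \ref{t13} part 2) to each component.

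The only point requiring any care — and it is a very mild one — is the bookkeeping of constants, so that the two coincident tail factors are pulled out cleanly as $\|f_{1}\|_{\infty} + \|f_{2}\|_{\infty}$ rather than being double-counted. There is no genuine analytic obstacle here: all the substantive work, namely the telescoping density identity (\ref{r16}), the Gaussian tail estimate (\ref{r18}), and the splitting of the infinite sum into near and far indices, is already contained in Theorem \ref{t13}, so the complex case follows as a purely formal consequence of linearity and the triangle inequality.
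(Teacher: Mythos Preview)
Your proposal is correct and follows exactly the paper's own argument, which consists of the single line ``Based on Remark \ref{re35} and Theorem \ref{t13}.'' You have simply unpacked that line: invoke (\ref{r147}) and (\ref{r150})--(\ref{r151}) from Remark \ref{re35}, apply Theorem \ref{t13} to each real component $f_{1},f_{2}\in C_{B}(\mathbb{R})$, and add the resulting bounds.
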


\begin{proof}
Based on Remark \ref{re35} and Theorem \ref{t13}.
\end{proof}

Next we present a result of high order complex neural network approximation.

\begin{theorem}
\label{t38}Let $f:\left[ a,b\right] \rightarrow \mathbb{C}$, $\left[ a,b%
\right] \subset \mathbb{R}$, such that $f=f_{1}+if_{2}.$ Assume $%
f_{1},f_{2}\in C^{N}\left( \left[ a,b\right] \right) ,$ $n,N\in \mathbb{N}$, 
$n^{1-\alpha }\geq 3,$ $0<\alpha <1$, $x\in \left[ a,b\right] $. Then

i) 
\begin{equation}
\left| A_{n}\left( f,x\right) -f\left( x\right) \right| \leq \left(
4.019\right) \cdot  \tag{156}  \label{r156}
\end{equation}%
\begin{equation*}
\left\{ \sum_{j=1}^{N}\frac{\left| f_{1}^{\left( j\right) }\left( x\right)
\right| +\left| f_{2}^{\left( j\right) }\left( x\right) \right| }{j!}\left[ 
\frac{1}{n^{\alpha j}}+\frac{\left( b-a\right) ^{j}}{2\sqrt{\pi }\left(
n^{1-\alpha }-2\right) e^{\left( n^{1-\alpha }-2\right) ^{2}}}\right]
+\right.
\end{equation*}%
\begin{equation*}
\left[ \frac{\omega _{1}\left( f_{1}^{\left( N\right) },\frac{1}{n^{\alpha }}%
\right) +\omega _{1}\left( f_{2}^{\left( N\right) },\frac{1}{n^{\alpha }}%
\right) }{n^{\alpha N}N!}+\right.
\end{equation*}%
\begin{equation*}
\left. \left. \left( \frac{\left( \left\| f_{1}^{\left( N\right) }\right\|
_{\infty }+\left\| f_{2}^{\left( N\right) }\right\| _{\infty }\right) \left(
b-a\right) ^{N}}{N!\sqrt{\pi }\left( n^{1-\alpha }-2\right) e^{\left(
n^{1-\alpha }-2\right) ^{2}}}\right) \right] \right\} ,
\end{equation*}

ii) assume further $f_{1}^{\left( j\right) }\left( x_{0}\right)
=f_{2}^{\left( j\right) }\left( x_{0}\right) =0$, $j=1,...,N$, for some $%
x_{0}\in \left[ a,b\right] $, it holds 
\begin{equation}
\left| A_{n}\left( f,x_{0}\right) -f\left( x_{0}\right) \right| \leq \left(
4.019\right) \cdot  \tag{157}  \label{r157}
\end{equation}%
\begin{equation*}
\left[ \frac{\omega _{1}\left( f_{1}^{\left( N\right) },\frac{1}{n^{\alpha }}%
\right) +\omega _{1}\left( f_{2}^{\left( N\right) },\frac{1}{n^{\alpha }}%
\right) }{n^{\alpha N}N!}+\right.
\end{equation*}%
\begin{equation*}
\left. \left( \frac{\left( \left\| f_{1}^{\left( N\right) }\right\| _{\infty
}+\left\| f_{2}^{\left( N\right) }\right\| _{\infty }\right) \left(
b-a\right) ^{N}}{N!\sqrt{\pi }\left( n^{1-\alpha }-2\right) e^{\left(
n^{1-\alpha }-2\right) ^{2}}}\right) \right] ,
\end{equation*}%
notice here the extremely high rate of convergence at $n^{-\left( N+1\right)
\alpha },$

iii) 
\begin{equation}
\left\| A_{n}\left( f\right) -f\right\| _{\infty }\leq \left( 4.019\right)
\cdot  \tag{158}  \label{r158}
\end{equation}%
\begin{equation*}
\left\{ \sum_{j=1}^{N}\frac{\left( \left\| f_{1}^{\left( j\right) }\right\|
_{\infty }+\left\| f_{2}^{\left( j\right) }\right\| _{\infty }\right) }{j!}%
\left[ \frac{1}{n^{\alpha j}}+\frac{\left( b-a\right) ^{j}}{2\sqrt{\pi }%
\left( n^{1-\alpha }-2\right) e^{\left( n^{1-\alpha }-2\right) ^{2}}}\right]
+\right.
\end{equation*}%
\begin{equation*}
\left[ \frac{\left( \omega _{1}\left( f_{1}^{\left( N\right) },\frac{1}{%
n^{\alpha }}\right) +\omega _{1}\left( f_{2}^{\left( N\right) },\frac{1}{%
n^{\alpha }}\right) \right) }{n^{\alpha N}N!}+\right.
\end{equation*}%
\begin{equation*}
\left. \left. \frac{\left( \left\| f_{1}^{\left( N\right) }\right\| _{\infty
}+\left\| f_{2}^{\left( N\right) }\right\| _{\infty }\right) \left(
b-a\right) ^{N}}{N!\sqrt{\pi }\left( n^{1-\alpha }-2\right) e^{\left(
n^{1-\alpha }-2\right) ^{2}}}\right] \right\} .
\end{equation*}
\end{theorem}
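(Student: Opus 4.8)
The plan is to reduce this complex high-order statement to the real high-order result Theorem \ref{t16}, applied separately to the real and imaginary parts $f_1,f_2$, in exactly the same way that Theorems \ref{t36} and \ref{t37} reduce to Theorems \ref{t12} and \ref{t13}. The structural fact that makes this work is that $A_n$ is linear in its function argument: the numerator $\sum_{k}f(k/n)\chi(nx-k)$ depends linearly on $f$, while the normalizing denominator $\sum_k\chi(nx-k)$ does not involve $f$ at all. Hence the definition \ref{r146} is consistent, and subtracting $f(x)=f_1(x)+if_2(x)$ yields the clean decomposition $A_n(f,x)-f(x)=(A_n(f_1,x)-f_1(x))+i(A_n(f_2,x)-f_2(x))$.

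For part (i) I would first invoke the complex triangle inequality \ref{r148}, and then apply Theorem \ref{t16}(i) to each of the real-valued functions $f_1$ and $f_2$. Adding the two resulting estimates, the common factor $(4.019)$ comes out front, and the bracketed quantities $\frac{1}{n^{\alpha j}}+\frac{(b-a)^j}{2\sqrt{\pi}(n^{1-\alpha}-2)e^{(n^{1-\alpha}-2)^2}}$ are independent of the function, so the coefficients $\frac{|f_1^{(j)}(x)|}{j!}$ and $\frac{|f_2^{(j)}(x)|}{j!}$ combine into $\frac{|f_1^{(j)}(x)|+|f_2^{(j)}(x)|}{j!}$. The two modulus-of-continuity remainder terms likewise collect into $\omega_1(f_1^{(N)},\tfrac{1}{n^\alpha})+\omega_1(f_2^{(N)},\tfrac{1}{n^\alpha})$, and the two supremum-norm tail terms into $\|f_1^{(N)}\|_\infty+\|f_2^{(N)}\|_\infty$, which is precisely the asserted bound.

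Part (ii) follows the same route under the extra hypothesis $f_1^{(j)}(x_0)=f_2^{(j)}(x_0)=0$ for $j=1,\dots,N$: by Theorem \ref{t16}(ii) the finite sums $\sum_{j=1}^N$ vanish in both real estimates, leaving only the two remainder pieces whose sum gives the claimed inequality with the sharp rate $n^{-(N+1)\alpha}$. For part (iii) I would pass to the supremum norm via \ref{r149}, apply Theorem \ref{t16}(iii) to $f_1$ and $f_2$, and add, the only change being that each pointwise $|f_i^{(j)}(x)|$ is replaced by $\|f_i^{(j)}\|_\infty$.

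The main obstacle here is purely organizational rather than analytic: one must collect like terms carefully after adding the two real estimates so that the coefficients merge into $|f_1^{(j)}|+|f_2^{(j)}|$, into $\omega_1(f_1^{(N)},\cdot)+\omega_1(f_2^{(N)},\cdot)$, and into $\|f_1^{(N)}\|_\infty+\|f_2^{(N)}\|_\infty$ in the correct places. No new estimate beyond Theorem \ref{t16} and the splitting inequalities of Remark \ref{re35} is required, so the proof is a short citation of those two results together with this bookkeeping.
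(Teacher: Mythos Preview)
Your proposal is correct and follows exactly the paper's approach: the paper's proof is just the one-line citation ``Based on Remark \ref{re35} and Theorem \ref{t16},'' and your plan spells out precisely this reduction via \eqref{r148}--\eqref{r149} together with the corresponding parts of Theorem \ref{t16}. The bookkeeping you describe for combining the $f_1$- and $f_2$-terms is the only content needed.
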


\begin{proof}
Based on Remark \ref{re35} and Theorem \ref{t16}.
\end{proof}

We continue with high order complex fractional neural network approximation.

\begin{theorem}
\label{t39}Let $f:\left[ a,b\right] \rightarrow \mathbb{C}$, $\left[ a,b%
\right] \subset \mathbb{R}$, such that $f=f_{1}+if_{2};$ $\alpha >0$, $%
N=\left\lceil \alpha \right\rceil $, $\alpha \notin \mathbb{N}$, $0<\beta <1$%
, $x\in \left[ a,b\right] ,$ $n\in \mathbb{N}$, $n^{1-\beta }\geq 3.$ Assume 
$f_{1},f_{2}\in AC^{N}\left( \left[ a,b\right] \right) ,$ with $%
f_{1}^{\left( N\right) },f_{2}^{\left( N\right) }\in L_{\infty }\left( \left[
a,b\right] \right) .$ Then

i) assume further $f_{1}^{\left( j\right) }\left( x\right) =f_{2}^{\left(
j\right) }\left( x\right) =0$, $j=1,...,N-1,$ we have 
\begin{equation*}
\left\vert A_{n}\left( f,x\right) -f\left( x\right) \right\vert \leq \frac{%
\left( 4.019\right) }{\Gamma \left( \alpha +1\right) }\cdot 
\end{equation*}%
\begin{equation*}
\left\{ \frac{1}{n^{\alpha \beta }}\left[ \left( \omega _{1}\left(
D_{x-}^{\alpha }f_{1},\frac{1}{n^{\beta }}\right) _{\left[ a,x\right]
}+\omega _{1}\left( D_{\ast x}^{\alpha }f_{1},\frac{1}{n^{\beta }}\right) _{%
\left[ x,b\right] }\right) +\right. \right. 
\end{equation*}%
\begin{equation*}
\left. \left( \omega _{1}\left( D_{x-}^{\alpha }f_{2},\frac{1}{n^{\beta }}%
\right) _{\left[ a,x\right] }+\omega _{1}\left( D_{\ast x}^{\alpha }f_{2},%
\frac{1}{n^{\beta }}\right) _{\left[ x,b\right] }\right) \right] +
\end{equation*}%
\begin{equation*}
\frac{1}{2\sqrt{\pi }\left( n^{1-\beta }-2\right) e^{\left( n^{1-\beta
}-2\right) ^{2}}}\cdot 
\end{equation*}%
\begin{equation*}
\left[ \left( \left\Vert D_{x-}^{\alpha }f_{1}\right\Vert _{\infty ,\left[
a,x\right] }\left( x-a\right) ^{\alpha }+\left\Vert D_{\ast x}^{\alpha
}f_{1}\right\Vert _{\infty ,\left[ x,b\right] }\left( b-x\right) ^{\alpha
}\right) +\right. 
\end{equation*}%
\begin{equation}
\left. \left. \left( \left\Vert D_{x-}^{\alpha }f_{2}\right\Vert _{\infty ,
\left[ a,x\right] }\left( x-a\right) ^{\alpha }+\left\Vert D_{\ast
x}^{\alpha }f_{2}\right\Vert _{\infty ,\left[ x,b\right] }\left( b-x\right)
^{\alpha }\right) \right] \right\} ,  \tag{159}  \label{r159}
\end{equation}%
when $\alpha >1$ notice here the extremely high rate of convergence at $%
n^{-\left( \alpha +1\right) \beta },$

ii) 
\begin{equation*}
\left\vert A_{n}\left( f,x\right) -f\left( x\right) \right\vert \leq \left(
4.019\right) \cdot \left\{ \sum_{j=1}^{N-1}\frac{\left( \left\vert
f_{1}^{\left( j\right) }\left( x\right) \right\vert +\left\vert
f_{2}^{\left( j\right) }\left( x\right) \right\vert \right) }{j!}\right. 
\end{equation*}%
\begin{equation*}
\left\{ \frac{1}{n^{\beta j}}+\frac{\left( b-a\right) ^{j}}{2\sqrt{\pi }%
\left( n^{1-\beta }-2\right) e^{\left( n^{1-\beta }-2\right) ^{2}}}\right\} +
\end{equation*}%
\begin{equation*}
\frac{1}{\Gamma \left( \alpha +1\right) }\left\{ \frac{1}{n^{\alpha \beta }}%
\left[ \left( \omega _{1}\left( D_{x-}^{\alpha }f_{1},\frac{1}{n^{\beta }}%
\right) _{\left[ a,x\right] }+\omega _{1}\left( D_{\ast x}^{\alpha }f_{1},%
\frac{1}{n^{\beta }}\right) _{\left[ x,b\right] }\right) +\right. \right. 
\end{equation*}%
\begin{equation*}
\left. \left( \omega _{1}\left( D_{x-}^{\alpha }f_{2},\frac{1}{n^{\beta }}%
\right) _{\left[ a,x\right] }+\omega _{1}\left( D_{\ast x}^{\alpha }f_{2},%
\frac{1}{n^{\beta }}\right) _{\left[ x,b\right] }\right) \right] +
\end{equation*}%
\begin{equation*}
\frac{1}{2\sqrt{\pi }\left( n^{1-\beta }-2\right) e^{\left( n^{1-\beta
}-2\right) ^{2}}}\cdot 
\end{equation*}%
\begin{equation*}
\left[ \left( \left\Vert D_{x-}^{\alpha }f_{1}\right\Vert _{\infty ,\left[
a,x\right] }\left( x-a\right) ^{\alpha }+\left\Vert D_{\ast x}^{\alpha
}f_{1}\right\Vert _{\infty ,\left[ x,b\right] }\left( b-x\right) ^{\alpha
}\right) +\right. 
\end{equation*}%
\begin{equation}
\left. \left. \left( \left\Vert D_{x-}^{\alpha }f_{2}\right\Vert _{\infty ,
\left[ a,x\right] }\left( x-a\right) ^{\alpha }+\left\Vert D_{\ast
x}^{\alpha }f_{2}\right\Vert _{\infty ,\left[ x,b\right] }\left( b-x\right)
^{\alpha }\right) \right] \right\} ,  \tag{160}  \label{r160}
\end{equation}%
and

iii) 
\begin{equation}
\left\| A_{n}\left( f\right) -f\right\| _{\infty }\leq \left( 4.019\right)
\cdot  \tag{158}
\end{equation}%
\begin{equation*}
\left\{ \sum_{j=1}^{N-1}\frac{\left( \left\| f_{1}^{\left( j\right)
}\right\| _{\infty }+\left\| f_{2}^{\left( j\right) }\right\| _{\infty
}\right) }{j!}\left\{ \frac{1}{n^{\beta j}}+\frac{\left( b-a\right) ^{j}}{2%
\sqrt{\pi }\left( n^{1-\beta }-2\right) e^{\left( n^{1-\beta }-2\right) ^{2}}%
}\right\} +\right.
\end{equation*}%
\begin{equation*}
\frac{1}{\Gamma \left( \alpha +1\right) }\left\{ \frac{1}{n^{\alpha \beta }}%
\left\{ \left[ \underset{x\in \left[ a,b\right] }{\sup }\omega _{1}\left(
D_{x-}^{\alpha }f_{1},\frac{1}{n^{\beta }}\right) _{\left[ a,x\right] }+%
\underset{x\in \left[ a,b\right] }{\sup }\omega _{1}\left( D_{\ast
x}^{\alpha }f_{1},\frac{1}{n^{\beta }}\right) _{\left[ x,b\right] }+\right.
\right. \right.
\end{equation*}%
\begin{equation*}
\left. \left. \underset{x\in \left[ a,b\right] }{\sup }\omega _{1}\left(
D_{x-}^{\alpha }f_{2},\frac{1}{n^{\beta }}\right) _{\left[ a,x\right] }+%
\underset{x\in \left[ a,b\right] }{\sup }\omega _{1}\left( D_{\ast
x}^{\alpha }f_{2},\frac{1}{n^{\beta }}\right) _{\left[ x,b\right] }\right]
\right\} +
\end{equation*}%
\begin{equation*}
\frac{\left( b-a\right) ^{\alpha }}{2\sqrt{\pi }\left( n^{1-\beta }-2\right)
e^{\left( n^{1-\beta }-2\right) ^{2}}}\cdot
\end{equation*}%
\begin{equation*}
\left[ \left( \underset{x\in \left[ a,b\right] }{\sup }\left\|
D_{x-}^{\alpha }f_{1}\right\| _{\infty ,\left[ a,x\right] }+\underset{x\in %
\left[ a,b\right] }{\sup }\left\| D_{\ast x}^{\alpha }f_{1}\right\| _{\infty
,\left[ x,b\right] }\right) +\right.
\end{equation*}%
\begin{equation}
\left. \left. \left( \underset{x\in \left[ a,b\right] }{\sup }\left\|
D_{x-}^{\alpha }f_{2}\right\| _{\infty ,\left[ a,x\right] }+\underset{x\in %
\left[ a,b\right] }{\sup }\left\| D_{\ast x}^{\alpha }f_{2}\right\| _{\infty
,\left[ x,b\right] }\right) \right] \right\} .  \tag{161}  \label{r161}
\end{equation}%
Above, when $N=1$ the sum $\sum_{j=1}^{N-1}\cdot =0$.

As we see here we obtain fractionally type pointwise and uniform convergence
with rates of complex $A_{n}\rightarrow I$ the unit operator, as $%
n\rightarrow \infty .$
\end{theorem}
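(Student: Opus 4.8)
The plan is to reduce the three estimates to the real-valued fractional bounds of Theorem \ref{t30} by exploiting the $\mathbb{R}$-linearity of $A_{n}$ together with the decomposition and triangle inequalities of Remark \ref{re35}. Writing $f=f_{1}+if_{2}$, the denominator $\sum_{k=\left\lceil na\right\rceil}^{\left\lfloor nb\right\rfloor}\chi\left(nx-k\right)$ in (\ref{r32}) does not depend on $f$, so $A_{n}\left(f,x\right)=A_{n}\left(f_{1},x\right)+iA_{n}\left(f_{2},x\right)$, which is (\ref{r146}). By (\ref{r148}) this yields
\[
\left|A_{n}\left(f,x\right)-f\left(x\right)\right|\leq\left|A_{n}\left(f_{1},x\right)-f_{1}\left(x\right)\right|+\left|A_{n}\left(f_{2},x\right)-f_{2}\left(x\right)\right|,
\]
and the sup-norm analogue is (\ref{r149}). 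Since $f_{1},f_{2}\in AC^{N}\left(\left[a,b\right]\right)$ with $f_{1}^{\left(N\right)},f_{2}^{\left(N\right)}\in L_{\infty}\left(\left[a,b\right]\right)$, Theorem \ref{t30} applies to each component, and the whole proof reduces to bounding the two real summands and regrouping.

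For part (i), the hypothesis $f_{1}^{\left(j\right)}\left(x\right)=f_{2}^{\left(j\right)}\left(x\right)=0$ for $j=1,\ldots,N-1$ is exactly what is needed to invoke part (ii) of Theorem \ref{t30} separately for $f_{1}$ and $f_{2}$. Each application gives a bound of the form $\frac{\left(4.019\right)}{\Gamma\left(\alpha+1\right)}\{\cdots\}$; adding the two and collecting the pair of modulus-of-continuity terms over $n^{\alpha\beta}$, and separately the pair of Caputo-norm terms carrying the factor $\left(n^{1-\beta}-2\right)^{-1}e^{-\left(n^{1-\beta}-2\right)^{2}}$, produces precisely (\ref{r159}).

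Part (ii) proceeds identically but invokes part (iii) of Theorem \ref{t30}. Here the polynomial remainders $\sum_{j=1}^{N-1}\frac{\left|f_{i}^{\left(j\right)}\left(x\right)\right|}{j!}\left[\cdots\right]$ for $i=1,2$ merge, upon addition, into the single sum $\sum_{j=1}^{N-1}\frac{\left|f_{1}^{\left(j\right)}\left(x\right)\right|+\left|f_{2}^{\left(j\right)}\left(x\right)\right|}{j!}\left[\cdots\right]$ appearing in (\ref{r160}), while the fractional block combines as in part (i). For part (iii) I would begin from (\ref{r149}) and apply part (iv) of Theorem \ref{t30} to $f_{1}$ and $f_{2}$; each already supplies its two modulus suprema and two Caputo-norm suprema, so adding them lists all of these additively and, after factoring $\left(b-a\right)^{\alpha}$ out of the norm block, matches (\ref{r161}). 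Finiteness of every supremum is guaranteed by (\ref{r130})--(\ref{r135}) applied componentwise.

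I expect no genuine mathematical obstacle: the argument is a clean linearity-plus-triangle-inequality reduction to Theorem \ref{t30} that introduces no new estimate. The only real hazard is bookkeeping, namely matching the precise grouping on the right-hand sides of (\ref{r159})--(\ref{r161}); in particular the stated bounds are expressed through $f_{1}$ and $f_{2}$ individually, so one should obtain them by adding the two componentwise applications of Theorem \ref{t30} rather than by passing through a complex Taylor expansion and the decomposition (\ref{r145}), which would mix the components and obscure the stated form.
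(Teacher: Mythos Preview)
Your proposal is correct and matches the paper's own proof, which consists of the single line ``Using Theorem \ref{t30} and Remark \ref{re35}.'' You have simply spelled out in detail the componentwise application of Theorem \ref{t30} together with the triangle inequalities (\ref{r148})--(\ref{r149}), which is exactly the intended argument.
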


\begin{proof}
Using Theorem \ref{t30} and Remark \ref{re35}.
\end{proof}

We need

\begin{definition}
\label{d40}Let $f\in C_{B}\left( \mathbb{R},\mathbb{C}\right) ,$ with $%
f=f_{1}+if_{2}$. We define 
\begin{eqnarray}
C_{n}\left( f,x\right) &:&=C_{n}\left( f_{1},x\right) +iC_{n}\left(
f_{2},x\right) ,  \notag \\
D_{n}\left( f,x\right) &:&=D_{n}\left( f_{1},x\right) +iD_{n}\left(
f_{2},x\right) ,\text{ \ \ }\forall \text{ }x\in \mathbb{R}\text{, }n\in 
\mathbb{N}.  \TCItag{162}  \label{r162}
\end{eqnarray}
\end{definition}

We finish with

\begin{theorem}
\label{t41}Let $f\in C_{B}\left( \mathbb{R},\mathbb{C}\right) ,$ $%
f=f_{1}+if_{2}$, $0<\alpha <1$, $n\in \mathbb{N}$, $n^{1-\alpha }\geq 3$, $%
x\in \mathbb{R}$. Then

i) 
\begin{equation*}
\left\{ 
\begin{array}{c}
\left| C_{n}\left( f,x\right) -f\left( x\right) \right| \\ 
\left| D_{n}\left( f,x\right) -f\left( x\right) \right|%
\end{array}%
\right. \leq \left( \omega _{1}\left( f_{1},\frac{1}{n}+\frac{1}{n^{\alpha }}%
\right) +\omega _{1}\left( f_{2},\frac{1}{n}+\frac{1}{n^{\alpha }}\right)
\right)
\end{equation*}%
\begin{equation}
+\frac{\left( \left\| f_{1}\right\| _{\infty }+\left\| f_{2}\right\|
_{\infty }\right) }{\sqrt{\pi }\left( n^{1-\alpha }-2\right) e^{\left(
n^{1-\alpha }-2\right) ^{2}}}=:\mu _{3n}\left( f_{1},f_{2}\right) , 
\tag{163}  \label{r163}
\end{equation}%
and

ii) 
\begin{equation}
\left\{ 
\begin{array}{c}
\left\| C_{n}\left( f\right) -f\right\| _{\infty } \\ 
\left\| D_{n}\left( f\right) -f\right\| _{\infty }%
\end{array}%
\right. \leq \mu _{3n}\left( f_{1},f_{2}\right) .  \tag{164}  \label{r164}
\end{equation}
\end{theorem}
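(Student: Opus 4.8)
The plan is to reduce the complex estimates to the real-valued bounds of Theorems \ref{tt14} and \ref{tt15}, following the same template Remark \ref{re35} used for $A_n$ and $B_n$. First I would use the linearity built into Definition \ref{d40} to split each operator along the decomposition $f = f_1 + if_2$, so that
\begin{equation*}
C_n(f,x) - f(x) = \bigl(C_n(f_1,x) - f_1(x)\bigr) + i\bigl(C_n(f_2,x) - f_2(x)\bigr),
\end{equation*}
and identically for $D_n$. Applying the elementary inequality $|u + iv| \leq |u| + |v|$ then gives
\begin{equation*}
|C_n(f,x) - f(x)| \leq |C_n(f_1,x) - f_1(x)| + |C_n(f_2,x) - f_2(x)|,
\end{equation*}
and the same bound holds verbatim with $D_n$ in place of $C_n$.

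Next I would feed in the scalar estimates. Since $f \in C_B(\mathbb{R}, \mathbb{C})$ forces both $f_1, f_2 \in C_B(\mathbb{R})$, each real part satisfies the hypotheses of Theorems \ref{tt14} and \ref{tt15}. Part 1) of Theorem \ref{tt14} yields
\begin{equation*}
|C_n(f_j,x) - f_j(x)| \leq \omega_1\Bigl(f_j, \tfrac{1}{n} + \tfrac{1}{n^\alpha}\Bigr) + \frac{\|f_j\|_\infty}{\sqrt{\pi}(n^{1-\alpha}-2)e^{(n^{1-\alpha}-2)^2}}
\end{equation*}
for $j = 1, 2$, and Theorem \ref{tt15} supplies the identical right-hand side for $D_n(f_j,x) - f_j(x)$; this coincidence is exactly why the common symbol $\mu_{3n}$ from (\ref{r48}) governs both operators. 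Adding the $j = 1$ and $j = 2$ inequalities then collapses into $\mu_{3n}(f_1, f_2)$ as defined in (\ref{r163}), establishing part i) for $C_n$ and $D_n$ at once.

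For part ii), I would take the supremum over $x \in \mathbb{R}$, using $\|C_n(f) - f\|_\infty \leq \|C_n(f_1) - f_1\|_\infty + \|C_n(f_2) - f_2\|_\infty$ together with parts 2) of Theorems \ref{tt14} and \ref{tt15}, which once more return the shared constant $\mu_{3n}(f_1, f_2)$. There is no genuine obstacle in this argument: the only points requiring care are that the additive splitting of the operators is exact, which is immediate from Definition \ref{d40}, and that both scalar theorems produce the same bound, which is what permits a single estimate to bracket $C_n$ and $D_n$ in one statement.
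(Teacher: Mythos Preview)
Your proposal is correct and follows exactly the approach the paper indicates: the paper's own proof is the single line ``By Theorems \ref{tt14}, \ref{tt15}, also see (162),'' and you have faithfully unpacked that by splitting via Definition \ref{d40}, applying the triangle inequality as in Remark \ref{re35}, and summing the real-valued bounds from Theorems \ref{tt14} and \ref{tt15}.
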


\begin{proof}
By Theorems \ref{tt14}, \ref{tt15}, also see (162).
\end{proof}


\begin{thebibliography}{99}
\bibitem{1} M. Abramowitz, I.A. Stegun, eds., \textit{Handbook of
Mathematical Functions with Formulas, Graphs, and Mathematical Tables}, New
York, Dover Publications, 1972.

\bibitem{2} G.A. Anastassiou, \textit{Rate of convergence of some neural
network operators to the unit-univariate case}, J. Math. Anal. Appli. 212
(1997), 237-262.

\bibitem{3} G.A. Anastassiou, \textit{Quantitative Approximations},
Chapman\&Hall/CRC, Boca Raton, New York, 2001.

\bibitem{4} G.A. Anastassiou, \textit{On Right Fractional Calculus}, Chaos,
solitons and fractals, 42 (2009), 365-376.

\bibitem{5} G.A. Anastassiou, \textit{Fractional Differentiation Inequalities%
}, Springer, New York, 2009.

\bibitem{6} G.A. Anastassiou, \textit{Fractional Korovkin theory}, Chaos,
Solitons \& Fractals, Vol. 42, No. 4 (2009), 2080-2094.

\bibitem{7} G.A. Anastassiou, \textit{Inteligent Systems: Approximation by
Artificial Neural Networks}, Intelligent Systems Reference Library, Vol. 19,
Springer, Heidelberg, 2011.

\bibitem{8} G.A. Anastassiou, \textit{Fractional representation formulae and
right fractional inequalities}, Mathematical and Computer Modelling, Vol.
54, no. 11-12 (2011), 3098-3115.

\bibitem{9} G.A. Anastassiou, \textit{Univariate hyperbolic tangent neural
network approximation}, Mathematics and Computer Modelling, 53(2011),
1111-1132.

\bibitem{10} G.A. Anastassiou, \textit{Multivariate hyperbolic tangent
neural network approximation}, Computers and Mathematics 61(2011), 809-821.

\bibitem{11} G.A. Anastassiou, \textit{Multivariate sigmoidal neural network
approximation}, Neural Networks 24(2011), 378-386.

\bibitem{12} G.A. Anastassiou, \textit{Univariate sigmoidal neural network
approximation}, J. of Computational Analysis and Applications, Vol. 14, No.
4, 2012, 659-690.

\bibitem{13} G.A. Anastassiou, \textit{Fractional neural network
approximation}, Computers and Mathematics with Applications, 64 (2012),
1655-1676.

\bibitem{14} L.C. Andrews, \textit{Special Functions of Mathematics for
Engineers}, Second edition, Mc Graw-Hill, New York, 1992.

\bibitem{15} Z. Chen and F. Cao, \textit{The approximation operators with
sigmoidal functions}, Computers and Mathematics with Applications, 58
(2009), 758-765.

\bibitem{16} K. Diethelm, \textit{The Analysis of Fractional Differential
Equations}, Lecture Notes in Mathematics 2004, Springer-Verlag, Berlin,
Heidelberg, 2010.

\bibitem{17} A.M.A. El-Sayed and M. Gaber, \textit{On the finite Caputo and
finite Riesz derivatives}, Electronic Journal of Theoretical Physics, Vol.
3, No. 12 (2006), 81-95.

\bibitem{18} G.S. Frederico and D.F.M. Torres, \textit{Fractional Optimal
Control in the sense of Caputo and the fractional Noether's theorem},
International Mathematical Forum, Vol. 3, No. 10 (2008), 479-493.

\bibitem{19} S. Haykin, \textit{Neural Networks: A Comprehensive Foundation }%
(2 ed.), Prentice Hall, New York, 1998.

\bibitem{20} W. McCulloch and W. Pitts, \textit{A logical calculus of the
ideas immanent in nervous activity}, Bulletin of Mathematical Biophysics, 7
(1943), 115-133.

\bibitem{21} T.M. Mitchell, \textit{Machine Learning}, WCB-McGraw-Hill, New
York, 1997.

\bibitem{22} D.S. Mitrinovic, \textit{Analytical Inequalities},
Springer-Verlag, New York, Heidelberg, 1970.

\bibitem{23} S.G. Samko, A.A. Kilbas and O.I. Marichev, \textit{Fractional
Integrals and Derivatives, Theory and Applications}, (Gordon and Breach,
Amsterdam, 1993) [English translation from the Russian, Integrals and
Derivatives of Fractional Order and Some of Their Applications (Nauka i
Tekhnika, Minsk, 1987)].
\end{thebibliography}
\end{document}